\newtheorem{thm}{Theorem}[section]
\newtheorem{lemma}[thm]{Lemma}
\newtheorem{cor}[thm]{Corollary}
\newtheorem{prop}[thm]{Proposition}
\newcommand*{\mysim}{\mathord{\sim}}
\theoremstyle{remark}
\newtheorem{remark}[thm]{Remark}
\newtheorem*{definition*}{Definition}
\newtheorem*{remark*}{Remark}
\newtheorem{claim}[thm]{Claim}
\def\R{{\mathbb R}}
\def\Z{{\mathbb Z}}
\def\F{{\mathcal F}}
\renewcommand{\>}{\rangle}
\renewcommand{\phi}{\varphi}
\renewcommand{\bar}{\overline}
\newcommand{\cvn}{\text{cv}_N}
\newcommand{\CVN}{\text{CV}_N}
\newcommand{\barcvn}{\bar{\text{cv}}_N}
\newcommand{\barCVN}{\bar{\text{CV}}_N}
\newcommand{\FN}{F_N}
\newcommand{\Stab}{\text{Stab}}
\newcommand{\ssm}{\smallsetminus}
\title{The boundary of the complex of free factors}
\author{Mladen Bestvina and Patrick Reynolds\thanks{The
    first author gratefully acknowledges the support by the National
    Science Foundation.}}
\date{March 22, 2013}
\begin{document}

\maketitle

\begin{abstract}
 We give a description of the boundary of a complex of free factors
 that is analogous to E. Klarreich's description of the boundary of a
 curve complex. The argument uses the geometry of folding paths
 developed in \cite{BF11} and the structure theory of trees on the
 boundary of Outer space developed recently by Coulbois, Hilion, Lustig and
 Reynolds.
\end{abstract}

\section{Introduction}

The complex of free factors, denoted $\mathcal{F}=\mathcal{F}_N$, for
the free group $\FN$ is an analogue for the complex of curves for a
surface.  The simplicial complex $\mathcal{F}$ arises as the nerve of
the intersection pattern for thin regions in Outer space, and hence
codes the geometry of Outer spaces relative to these thin regions.
Vertices of $\mathcal{F}$ are conjugacy classes of non-trivial proper
free factors of the rank-$N$ free group $\FN$, and higher dimensional
simplices correspond to chains of inclusions of free factors.

Equip $\mathcal{F}$ with the simplicial metric.  It was shown in
\cite{BF11} that $\mathcal{F}$ is Gromov hyperbolic; the goal of the
present note is to give a concrete description of the boundary
$\partial \mathcal{F}$ of $\mathcal{F}$. Kapovich-Rafi \cite{kr12}
have shown that hyperbolicity of $\mathcal{F}$ can be deduced from the
hyperbolicity of the free splitting complex, which was shown by
Handel-Mosher \cite{HM12}, and an alternative proof of this was given
by Hilion-Horbez \cite{hh12}.

Let $\partial \CVN$ denote the boundary of the Culler-Vogtmann Outer
space $\CVN$ \cite{CV86}; the points of $\partial \CVN$ are
represented by very small actions of $\FN$ on $\mathbb{R}$-trees.
Associated to $T \in \partial \CVN$ is a (algebraic) lamination
$L(T)$, which intuitively records information about which elements of
$\FN$ act with short translation length in $T$.  A lamination is an
$\FN$-invariant, flip invariant, closed subset $X \subseteq \partial^2
\FN =\partial \FN \times \partial \FN \ssm diag$.  A finitely
generated subgroup $H \leq \FN$ is a virtual retract of $\FN$ by
M. Hall's Theorem, hence $H$ is quasi-convex in $\FN$, and $\partial
H$ embeds in $\partial \FN$; say that $H$ carries a leaf of $X$ if $X
\cap \partial^2 H \neq \emptyset$.

A lamination $X$ is called arational if no leaf of $X$ is carried by a
proper free factor of $\FN$; a tree $T \in \partial \CVN$ is called
arational if $L(T)$ is arational.  Let $\mathcal{AT} \subseteq
\partial \CVN$ denote the set of arational trees, equipped with the
subspace topology.  Define a relation $\sim$ on $\mathcal{AT}$ by $S
\sim T$ if and only if $L(S)=L(T)$, and give $\mathcal{AT}/\mysim$ the
quotient topology.  Our main result is:

\begin{thm}
 The space $\partial \mathcal{F}$ is homeomorphic to $\mathcal{AT}/\mysim$.
\end{thm}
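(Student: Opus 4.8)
The plan is to construct the homeomorphism explicitly by a folding-path construction, in the spirit of Klarreich's argument for curve complexes. The tools are: the coarsely Lipschitz, coarsely surjective projection $\pi\colon\CVN\to\F$ of \cite{BF11} (a marked graph maps to a free factor carried by one of its shortest loops); the main estimate of \cite{BF11}, namely that $\pi$ sends folding paths to uniform unparametrized quasigeodesics in the Gromov hyperbolic space $\F$; and the Coulbois--Hilion--Lustig--Reynolds structure theory for trees in $\barCVN$, in particular that an arational $T$ has minimal dual lamination $L(T)$ --- every leaf dense --- carried by no proper free factor. The first step is the dichotomy: the $\pi$-image of a folding ray $(G_t)$ is bounded if and only if the short loops along $(G_t)$ are eventually trapped in a fixed proper free factor $A$, and this happens precisely when $(G_t)$ accumulates in $\barCVN$ only at non-arational trees; one then moreover checks that $A$ carries a leaf of $L(T)$ for any accumulation point $T$. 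The substantive direction --- that accumulation at an arational tree forces the $\pi$-image to be unbounded --- I would prove using the progress criterion for folding paths of \cite{BF11} together with the structure of $L(T)$. Since an unbounded unparametrized quasigeodesic ray in a hyperbolic space converges to a point of the Gromov boundary, this lets me set, for arational $T$,
\[ Q(T)\ :=\ \lim_{t\to\infty}\pi(G_t)\ \in\ \partial\F , \]
for any folding ray $(G_t)$ accumulating at $T$; that the limit exists and is independent of the choice of such a ray is then a consequence of stability of quasigeodesics in $\F$.

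The second step is to prove that $Q(S)=Q(T)$ whenever $L(S)=L(T)$, so that $Q$ descends to a map $\bar Q\colon\mathcal{AT}/\mysim\to\partial\F$. The mechanism I would use is that the coarse position $\pi(G_t)$ along a folding ray is determined, up to bounded error, by which free factors carry the long leaf segments of the dual lamination $L(G_t)$, and that, as $G_t$ approaches $T$, these segments accumulate precisely onto the leaves of $L(T)$. Because $L(T)$ is minimal and carried by no proper free factor, a folding ray accumulating at $T$ and one accumulating at $S$ then project to paths at bounded Hausdorff distance, hence to the same boundary point; this is also where one reconciles the fact that $S$ and $T$ may differ as trees while sharing a dual lamination.

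For surjectivity of $\bar Q$: given $\xi\in\partial\F$, pick a quasigeodesic ray to $\xi$; by \cite{BF11} it is tracked by a folding ray $(G_t)$; by compactness of $\barCVN$ this ray accumulates at some $T\in\partial\CVN$, and $T$ is arational, since otherwise $\pi(G_t)$ would be bounded, contradicting $\pi(G_t)\to\xi$; then $\pi(G_t)\to Q(T)$ by the dichotomy, so $\xi=Q(T)=\bar Q([T])$. Injectivity of $\bar Q$ is, I expect, the main obstacle and the technical core of the proof. Suppose $Q(S)=Q(T)=\xi$ with $S,T$ arational; then the projected folding rays toward $S$ and toward $T$ are quasigeodesic rays to the same boundary point and hence fellow travel, and the task is to push this coarse coincidence back to the laminations. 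The mechanism should be: along the two rays the long leaf segments of $L(G_t)$ and of $L(G'_t)$ are carried by free factors staying uniformly close in $\F$, so that in the limit a leaf of $L(T)$ and a leaf of $L(S)$ are forced to be \emph{linked}; and since for an arational tree a single leaf of the dual lamination already determines the whole (minimal) lamination, this gives $L(S)=L(T)$. Making this fine-to-coarse argument rigorous is exactly where the structure theory for arational trees is indispensable.

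Finally I would check that the bijection $\bar Q$ is a homeomorphism. For continuity of $\bar Q$, I would first isolate the lemma that, with covolumes normalized, $G_n\to T$ in $\barCVN$ with $T$ arational implies $\pi(G_n)\to Q(T)$ in $\F\cup\partial\F$; then, given $[T_n]\to[T]$ in $\mathcal{AT}/\mysim$, lift to $T_n\to T$, choose $G_n\in\CVN$ within $1/n$ of $T_n$, and run a diagonal argument with that lemma and the behavior of the Gromov product to conclude $\bar Q([T_n])\to\bar Q([T])$. For continuity of $\bar Q^{-1}$: if $\xi_n\to\xi$ in $\partial\F$, write $\xi_n=\bar Q([T_n])$ with $T_n$ arational; by compactness a subsequence of $(T_n)$ converges in $\barCVN$ to a tree $T'$, which is arational with $\bar Q([T'])=\xi$ as in the surjectivity step, so $[T']=[T]$ by injectivity; since every subsequence of $([T_n])$ has a further subsequence converging to $[T]$, we get $[T_n]\to[T]$. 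Some care with point-set topology is needed here --- $\partial\F$ need not be compact or first countable, and $\mathcal{AT}/\mysim$ carries a quotient topology that must be shown to be well-behaved --- so these sequential arguments may have to be recast with nets, or supplemented by an openness argument; granting this, $\bar Q$ is the desired homeomorphism, which proves the theorem.
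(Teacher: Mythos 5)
Your outline reproduces Klarreich's strategy, which is also the paper's, but at the three places where the real work happens your proposal substitutes plausible-sounding mechanisms for arguments that are not available, and in each case the paper does something genuinely different. First, surjectivity: you assert that a quasigeodesic ray to $\xi\in\partial\F$ "is tracked by a folding ray by \cite{BF11}". No such statement exists; $\pi$ sends folding paths to unparametrized quasigeodesics, but there is no converse saying every quasigeodesic in $\F$ is coarsely the image of a folding path. The paper instead lifts a sequence $X_n\to\xi$ to trees $T_n\in\pi^{-1}(X_n)$, extracts a limit $T\in\barCVN$, and must \emph{rule out} that $T$ is non-arational; this is the hardest step and consumes most of Sections 5--6 (limits of folding paths, Rips-theoretic case analysis of point stabilizers, and the proof that a folding path converging to a non-arational tree has bounded projection, via a reducing factor of bounded volume). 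Your "dichotomy" in step one takes this last fact as something "one then moreover checks"; it is not a check, it is Lemma \ref{combined} and its supporting lemmas.

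Second, injectivity and well-definedness. Your mechanism -- fellow-traveling of projected rays forces leaves of $L(S)$ and $L(T)$ to be "linked", and a single leaf determines the lamination -- has no proof sketch and is exactly the fine-to-coarse bridge that is missing. The paper never argues this way: it works dually with currents, defining $T^*=\{\mu\in\mathbb PM_N : \<T,\mu\>=0\}$ and proving (Theorem \ref{T.UniqueLam}, which rests on the dendrite/index argument of Section 3) that if $T$ is arational and $\<T,\mu\>=\<U,\mu\>=0$ then $U$ is arational with $L(U)=L(T)$. Combined with an analysis of rescaled limits of geodesics (Theorem \ref{U-turn3}), this shows that if $S^*\neq T^*$ then geodesics $[S_n,T_n]$ must accumulate inside $\CVN$, bounding the Gromov product -- injectivity by contrapositive, with no need to recover leaves from coarse data. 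Without the length pairing and the $T^*$ formalism your proposal has no tool to pass from coincidence in $\partial\F$ back to laminations. Finally, the point-set issue you flag at the end is real and you leave it unresolved; the paper resolves it by showing the quotient map $\mathcal{AT}\to\mathcal{AT}/\mysim$ is closed with compact fibers (so the quotient is metrizable and sequential arguments are legitimate) and that $\partial\pi$ is a closed map, whence a continuous closed bijection is a homeomorphism.
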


This theorem is a very strong analogue of E. Klarreich's description
of the boundary $\partial \mathcal{C}(S)$ of the complex of curves
$\mathcal{C}(S)$ associated to a non-exceptional surface $S$;
Klarreich showed that $\partial \mathcal{C}(S)$ is homeomorphic to
$\mathcal{AF}/\mysim$, where $\mathcal{AF} \subseteq \mathcal{PML}(S)$
is the subspace consisting of arational measured foliations, and where
for $E,F \in \mathcal{AF}$ one has $E \sim F$ if and only if the
underlying topological foliations are equivalent \cite{Kla}.

The argument for our main result follows the outline of Klarreich's
paper, but the details are quite different; the difficulty comes from
pushing an analogy between Outer space and Teichm\"uller space.

The paper is organized as follows.  Relevant
background about Outer space, very small $\FN$-trees, laminations, and
$\mathcal{F}$ is found in Section 2.  The proof of the main result can
be roughly divided into four steps.  The first step is to show that
arational trees are indeed very close analogues of arational measured
foliations on surfaces; this is accomplished in Sections 3 and 4.  The
main result is Theorem \ref{T.UniqueLam}, a duality result, which says
that if $T$ is arational and shares a length 0 current with $S$, then
$S$ is also arational and $S\sim T$. The second step is to obtain
control over the way that trees can fail to be arational; this is
accomplished in Sections 5 and 6.  Here, we bring a study of standard
geodesics in Outer space, which serve as surrogates for Teichm\"uller
geodesics, and the main result there is Lemma \ref{combined}, which
shows that if $G_t$ is a folding line that converges to a tree $T
\notin \mathcal{AT}$, then the image of $G_t$ for large $t$ in
$\mathcal{F}$ is a uniformly bounded set.  The last two steps involve
running Klarreich's argument and collecting some basic facts about the
point set topology of the spaces $\mathcal{AT}$,
$\mathcal{AT}/\mysim$, and $\partial \mathcal{F}$.  This is the
content of Section 7, where the main result is proved.

Technically, our arguments use the geometry of Outer space and folding
paths as developed in \cite{BF11}, the structure theory of trees in
$\partial \CVN$ developed recently by Coulbois, Hilion, Lustig and
Reynolds \cite{CHL09,CHL08a,CHL08b,CHL08c,CHL07,R10a,R10b,R12,CHR11}. 

\vspace{.2cm}

\noindent {\bf Note}: Very recently, the main result of this paper was also announced by Hamenst\"adt \cite{H12}.

\vspace{.2cm}

\noindent \emph{Acknowledgments: We wish to thank Mark Feighn for
discussions with the first author in which Lemma \ref{L.Vertex} and
Theorem \ref{U-turn3} emerged. We also wish to thank Ursula Hamenst\"adt for
prompting us to finally finish writing this paper.}

\section{Background}

Let $\FN$ denote the free group of rank $N$.  Throughout, we consider
isometric actions of $\FN$ on $\mathbb{R}$-trees; all actions are
assumed minimal.  Let $T$ be a tree; a subset
$I \subseteq T$ is called an \emph{arc} if $I$ is isometric to a segment
in $\mathbb{R}$.  An arc is \emph{non-degenerate} if it contains more than
one point.  For a subset $Y$ of an $\FN$-tree $T$, the stabilizer of $Y$,
denoted $Stab(Y)$, is the set-wise stabilizer of $Y$. In this
section, we collect some definitions and basic results.

\subsection{Outer space and very small $\FN$-trees}

A subset $Y \subseteq T$ that is the convex hull of three points is
called a \emph{tripod} if $Y$ is not a segment.  An action $\FN
\curvearrowright T$ on a tree $T$ is \emph{very small} if for any
non-degenerate arc $I \subseteq T$, either $\Stab(I) = \{1\}$ or
$\Stab(I)$ is a maximal cyclic subgroup of $\FN$, and if for any
tripod $Y \subseteq T$, $Stab(Y)=\{1\}$.  An action $\FN
\curvearrowright T$ is \emph{discrete} (or \emph{simplicial}) if the
$\FN$-orbit of any point of $T$ is a discrete subset of $T$.

The \emph{unprojectivised Outer Space} of rank $N$, denoted $\cvn$, is
the topological space whose underlying set consists of free, minimal,
discrete, isometric actions of $\FN$ on $\mathbb{R}$-trees. For
$T\in\cvn$ we
frequently consider the quotient graph $T/\FN$; this is a marked
metric graph, i.e. there is an identification $\pi_1(T/\FN)\cong\FN$
defined up to conjugation and edges of $T/\FN$ have positive lengths.

A minimal
$\FN$-tree is completely determined by its translation length function
\cite{cm}; this gives an inclusion $\cvn
\subseteq\mathbb{R}^{\FN}$ and a topology on $\cvn$. The non-trivial
points in the closure $\barcvn$ in $\R^{\FN}$ are very small isometric
actions of $\FN$ on $\mathbb{R}$-trees \cite{CL95, BF94}. The
Culler-Vogtmann Outer space, denoted $\CVN$, is the image of $\cvn$ in
the projective space $\mathbb{P}\mathbb{R}^{\FN}$; the points of
$\CVN$ are thought of as free, simplicial $\FN$-trees of co-volume
one. $\CVN$ is canonically a complex of simplices-with-missing-faces
(which we simply call {\it simplices}),
with an (open) simplex corresponding to varying the lengths of edges
(and keeping them positive) on a fixed marked graph. The closure
$\barCVN$ of $\CVN$ in $\mathbb{P}\mathbb{R}^{\FN}$ is compact and
$\partial \CVN=\barCVN-\CVN$ is the projectivization of
$\partial\cvn=\barcvn-\cvn$ and consists of very small trees that are
either non-free or non-simplicial. 
 
The group $Out(\FN)$ acts on $\cvn$, $\barcvn$, $\CVN$ and $\barCVN$:
given a tree $T$ with length function $l_T$ and an element $\Phi \in
Out(\FN)$, for $g \in \FN$, set $l_{T\Phi}(g):=l_T(\phi(g))$, where
$\phi$ is any lift of $\Phi$ to $Aut(\FN)$.

Let $T \in \partial \cvn$, and let $H \leq \FN$ be
finitely generated.  If $H$ does not fix a point in $T$, then we let
$T_H$ stand for the minimal $H$-invariant subtree of $T$; $T_H$ is the
union of axes of hyperbolic elements of $H$.  If $T$ has trivial arc
stabilizers, which is always the case when $T$ has dense orbits, then
for any finitely generated $H \leq \FN$, there is a unique minimal
tree for $H$: either $T_H$ in the case of the previous sentence, or
the unique fixed point of $H$, if $H$ contains no hyperbolic element.

\subsection{Algebraic Laminations and Currents}

We review algebraic laminations associated to $\FN$-trees; see
\cite{CHL08a} and \cite{CHL08b} for details.  Let $\partial \FN$
denote the Gromov boundary of $\FN$ --- \emph{i.e.} the boundary of
any Cayley graph of $\FN$; boundaries of hyperbolic spaces are
reviewed below (equivalently, $\partial \FN$ is the space of ends of
$\FN$).  Let $\partial^2(\FN):=\partial \FN \times \partial
\FN\ssm\Delta$, where $\Delta$ is the diagonal.  The left action of
$\FN$ on a Cayley graph induces actions by homeomorphisms of $\FN$ on
$\partial \FN$ and $\partial^2 \FN$.  Let $i: \partial^2 \FN
\rightarrow \partial^2 \FN$ denote the involution that exchanges the
factors.  A \emph{lamination} is a non-empty, closed, $\FN$-invariant,
$i$-invariant subset $L\subseteq \partial^2 \FN$.

Associated to $T \in \partial \cvn$ is a lamination $L(T)$, which is
constructed as follows.  Let
$$L_{\epsilon}(T):=\overline{\{(g^{-\infty},g^{\infty})|l_T(g)<\epsilon \}}$$
and define
$L(T):= \cap_{\epsilon > 0} L_{\epsilon}$.

Let $T \in \partial \cvn$, and let $H\leq \FN$ be finitely generated.
Then $H$ is virtually a retract of $\FN$ and, hence, is quasi-convex
in $\FN$; so $\partial^2 H$ embeds in $\partial^2 \FN$.  We say that
$H$ \emph{carries} a leaf of $L(T)$ if there is a leaf $l \in L(T)$
such that $l \in \partial^2 H$.  We note that $H$ carries a leaf of
$L(T)$ if and only if either some element of $H$ fixes a point in $T$,
or the action $H \curvearrowright T_H$ is not discrete.

A \emph{(measured geodesic) current} is an $\FN$-invariant Radon
measure $\nu$ on $\partial^2 \FN$, \emph{i.e} $\nu$ is a Borel measure
that is finite on compact subsets of $\partial^2 \FN$.  Let
$Curr(\FN)$ denote the set of currents; equip $Curr(\FN)$ with the
weak* topology.  The group $Out(\FN)$ acts on $Curr(\FN)$ on the left
as follows: let $C \subseteq \partial^2 \FN$ be compact, let $\Phi \in
Out(\FN)$, and let $\nu \in Curr(\FN)$, then
$\Phi(\nu)(C):=\nu(\phi^{-1}(C))$, where $\phi \in Aut(\FN)$ is any
lift of $\Phi$.

If $g \in \FN$ is such that the conjugacy class of $g$ does not
contain an element of the form $h^k$ for $h \in \FN$ and $k>1$, then
there is a \emph{counting current}, denoted $\eta_g$, associated to
the conjugacy class of $g$. We also set $\eta_{g^k}=k\eta_g$ and
frequently write $g$ instead of $\eta_g$.
In \cite{KL09a}, Kapovich and Lustig
establish the following:

\begin{prop}\cite[Theorem A]{KL09a}\label{P.Inter}
 There is a unique $Out(\FN)$-invariant, continuous length pairing
 that is $\mathbb{R}_{\geq 0}$ homogeneous in the first coordinate and
 $\mathbb{R}_{\geq 0}$-linear in the second coordinate
$$
\langle \cdot, \cdot \rangle : \barcvn \times Curr(\FN) \to \mathbb{R}_{\geq 0}
$$
Further, $\langle T, \eta_g \rangle=l_T(g)$ for all $T \in \cvn$ and all counting currents $\eta_g$. 
\end{prop}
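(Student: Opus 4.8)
The plan is to pin the pairing down by a density argument, build it on $\cvn$ by an explicit combinatorial formula, and then extend it continuously to $\barcvn$. \emph{Uniqueness.} Since $\cvn$ is dense in $\barcvn$ and (by a theorem of Kapovich) the rational currents --- the finite non-negative real combinations $\sum_i c_i\eta_{g_i}$ of counting currents --- are dense in $Curr(\FN)$, any pairing with the stated properties is determined: $\mathbb{R}_{\ge0}$-linearity in the second variable together with the normalization $\langle T,\eta_g\rangle = l_T(g)$ forces $\langle T,\sum_i c_i\eta_{g_i}\rangle = \sum_i c_i\, l_T(g_i)$ for $T\in\cvn$; continuity in the second variable then determines $\langle T,\cdot\rangle$ on all of $Curr(\FN)$ when $T\in\cvn$; and continuity in the first variable determines the pairing on all of $\barcvn\times Curr(\FN)$. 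The same observation shows that a continuous extension automatically inherits $Out(\FN)$-invariance, homogeneity and linearity from the dense subset $\cvn\times\{\text{rational currents}\}$, so the real task is to build a jointly continuous extension.

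\emph{Construction on $\cvn$.} For $T\in\cvn$ with quotient marked metric graph $\Gamma = T/\FN$, I would set $\langle T,\nu\rangle := \sum_e l_\Gamma(e)\,\nu(\mathrm{Cyl}(\tilde e))$, the sum over the finitely many edges $e$ of $\Gamma$, where $\tilde e$ is a lift of $e$ to $T$ and $\mathrm{Cyl}(\tilde e)\subseteq\partial^2\FN$ is the clopen set of bi-infinite geodesics of $T$ containing $\tilde e$ (this set depends on the lift only up to the $\FN$-action, so the term is well defined by $\FN$-invariance of $\nu$). Then the formula is visibly $\mathbb{R}_{\ge0}$-linear in $\nu$, $\mathbb{R}_{\ge0}$-homogeneous in $T$, and $Out(\FN)$-invariant; evaluated on $\eta_g$ it sums the edge-lengths with the multiplicities with which the reduced loop representing $g$ crosses them, i.e. equals $l_T(g)$; and it is continuous on $\cvn\times Curr(\FN)$, since $\nu\mapsto\nu(\mathrm{Cyl}(\tilde e))$ is weak-$*$ continuous and the lengths and cylinders vary compatibly as $T$ moves within and across simplices (across faces one passes to a common subdivision, or collapses the edges whose length tends to $0$).

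\emph{Extension to $\barcvn$, and the main obstacle.} Given $T\in\barcvn$, choose $T_i\in\cvn$ with $T_i\to T$; I want $\langle T_i,\nu\rangle$ to converge, with limit independent of the sequence and jointly continuous in $(T,\nu)$. The tool is a distortion estimate: for $S,S'\in\cvn$ and any current $\nu$ one has $\langle S,\nu\rangle \le \Lambda(S,S')\,\langle S',\nu\rangle$, where $\Lambda(S,S')$ is the least Lipschitz constant of an $\FN$-equivariant map $S'\to S$ --- true on counting currents because such a map distorts every loop length by at most $\Lambda$, hence on rational currents by linearity, hence on all currents by the density and continuity of the previous paragraph. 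Because $T\notin\cvn$ the numbers $\Lambda(T_i,T_j)$ need not tend to $1$, so Cauchyness is not immediate; instead I would approximate $\nu$ by a rational current $c\eta_g$, use $\langle T_i,c\eta_g\rangle = c\,l_{T_i}(g)\to c\,l_T(g)$, and bound $|\langle T_i,\nu\rangle - c\,l_{T_i}(g)|$ \emph{uniformly in $i$} via the finite cylinder sum defining $\langle T_i,\cdot\rangle$, the point being that in a convergence $T_i\to T$ of very small trees only a controlled amount of $\nu$-mass can concentrate on the collapsing edges of $T_i/\FN$. Granting this, $\langle T,\nu\rangle := \lim_i\langle T_i,\nu\rangle$ is well defined, jointly continuous, and by the first paragraph has all the asserted properties, the formula on counting currents persisting on $\cvn$ by construction. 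I expect this last uniform estimate to be the crux: it is precisely where the combinatorics of the quotient graph degenerates, and where very-smallness --- trivial tripod stabilizers and at most cyclic arc stabilizers --- is what prevents current mass from leaking away in the limit.
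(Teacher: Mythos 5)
This proposition is quoted verbatim from Kapovich--Lustig \cite{KL09a}; the paper offers no proof of it, so there is nothing internal to compare against, and your proposal has to be judged as a proof of the Kapovich--Lustig theorem itself. The parts you do carry out are correct and standard: uniqueness follows exactly as you say from density of $\cvn$ in $\barcvn$ and density of the rational currents $c\eta_g$ in $Curr(\FN)$, and the cylinder formula $\langle T,\nu\rangle=\sum_e l_\Gamma(e)\,\nu(\mathrm{Cyl}(\tilde e))$ is the right definition on $\cvn$, with the stated linearity, homogeneity, equivariance, normalization on counting currents, and joint continuity on $\cvn\times Curr(\FN)$ (the degenerating-forest case is fine because, after passing to a fixed combinatorial type, the cylinders of the collapsing edges are fixed compact sets of finite $\nu$-mass multiplied by lengths tending to $0$).

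The gap is the extension to $\partial\cvn$, which is the entire content of the theorem, and the mechanism you propose for it does not work as stated. Weak-$*$ approximation of $\nu$ by a single rational current $c\eta_g$ only controls $\nu$ against finitely many \emph{fixed} compact-open test sets, whereas the quantity you need to control, $|\langle T_i,\nu\rangle-c\,l_{T_i}(g)|$, is a cylinder sum over the edges of $T_i/\FN$, and these cylinders change with $i$ and escape any fixed finite family as the graphs degenerate. The assertion that ``only a controlled amount of $\nu$-mass can concentrate on the collapsing edges'' is essentially a restatement of the continuity you are trying to prove, so the argument is circular at its crux; moreover, for a tree in $\partial\cvn$ with dense orbits there are no edges at all, so the limit cannot be identified with any intrinsic cylinder-type formula without further work. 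The actual Kapovich--Lustig proof proceeds differently: they give an \emph{intrinsic} definition of $\langle T,\mu\rangle$ for every $T\in\barcvn$ (via a basepoint, a basis, and two-sided bounded back-tracking estimates for the orbit map $\FN\to T$), verify it agrees with the cylinder formula on $\cvn$, and then prove joint continuity by a delicate approximation argument in which very-smallness of the limiting actions enters through the BBT constants. That is a substantial argument occupying most of their paper, not a uniform estimate one can wave at; as written, your proposal proves the easy half (uniqueness and the simplicial case) and leaves the hard half open.
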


The support $Supp(\nu)$ of a current $\nu$ is a lamination
on $\FN$; $Supp(\nu)$ has an isolated point if and only if $\nu$ has
an atom.  Kapovich and Lustig give the following characterization of
zero length:

\begin{prop}\cite[Theorem 1.1]{KL10d}\label{P.ZeroInter}
 Let $T \in \barcvn$, and let $\nu \in Curr(\FN)$.  Then $\langle T,
 \nu \rangle =0$ if and only if $Supp(\nu) \subseteq L(T)$.
\end{prop}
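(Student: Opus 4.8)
The plan is to prove the two implications separately. Throughout I use three facts: the pairing of Proposition~\ref{P.Inter} is continuous on $\barcvn\times Curr(\FN)$ and $\R_{\ge 0}$-linear in the second variable; rational currents $\{\lambda\eta_g : \lambda\ge 0,\ g\in\FN\}$ are weak-$*$ dense in $Curr(\FN)$; and for $T\in\cvn$ there is the exact edge formula $\langle T,\nu\rangle=\sum_{e}\ell(e)\,\nu(\mathrm{Cyl}^T_e)$, the sum being over the edges of $T/\FN$ and $\mathrm{Cyl}^T_e\subseteq\partial^2\FN$ being the compact-open set of pairs whose connecting geodesic in $T$ crosses a fixed lift of $e$ (for $\nu=\eta_g$ the $e$-term simply counts the crossings of the loop $g$ with $e$, and the general statement follows by density). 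As a warm-up, if $T\in\cvn$ then $l_T(g)\ge\min_e\ell(e)>0$ for every nontrivial $g$, so $L(T)=\emptyset$, and the edge formula gives $\langle T,\nu\rangle=0\iff\nu=0\iff\mathrm{Supp}(\nu)\subseteq L(T)$; so from now on assume $T\in\partial\cvn$.

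For the implication $\langle T,\nu\rangle=0\Rightarrow\mathrm{Supp}(\nu)\subseteq L(T)$ I argue by contraposition. Suppose $(X_0,Y_0)\in\mathrm{Supp}(\nu)$ but $(X_0,Y_0)\notin L(T)=\bigcap_{\e>0}L_\e(T)$. Since each $L_\e(T)$ is closed, fix $\e>0$ and a basic compact-open neighborhood $U=\mathrm{Cyl}(w)$ of $(X_0,Y_0)$, for a suitably long reduced word $w$, with $U\cap\{(g^{-\infty},g^{\infty}) : l_T(g)<\e\}=\emptyset$; unwinding, every cyclically reduced $g$ whose cyclic word contains an occurrence of $w^{\pm1}$ satisfies $l_T(g)\ge\e$. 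Because $U$ is open and meets $\mathrm{Supp}(\nu)$, $\nu(U)=:c>0$. Pick rational currents $\lambda_k\eta_{g_k}\to\nu$; since $U$ is compact-open, $\lambda_k\,\eta_{g_k}(U)\to c$, and (discarding the trivial case where the $g_k$ stay bounded, which is handled directly) the cyclic word of $g_k$ contains on the order of $c/(\lambda_k|w|)$ pairwise disjoint copies of $w^{\pm1}$ for large $k$. Reading $g_k$ around its axis, between two consecutive such copies one extracts a cyclically reduced word still containing $w^{\pm1}$, hence of $T$-translation length $\ge\e$; these translation lengths add up along the axis of $g_k$ up to a controlled error, so $l_T(g_k)\ge\e'c/\lambda_k$ with $\e'=\e'(\e,|w|)>0$. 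Passing to the limit, $\langle T,\nu\rangle=\lim_k\lambda_k\,l_T(g_k)\ge\e'c>0$, a contradiction.

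For the converse, $\mathrm{Supp}(\nu)\subseteq L(T)\Rightarrow\langle T,\nu\rangle=0$, the idea is that $\langle T,\nu\rangle$ only detects the part of $\partial^2\FN$ carrying $\nu$, which is metrically negligible. Fix $\e>0$. Since $\mathrm{Supp}(\nu)\subseteq L(T)\subseteq L_\e(T)$ and the open set $\partial^2\FN\smallsetminus L_\e(T)$ is a countable union of cylinders, $\nu$ vanishes on every cylinder disjoint from $L_\e(T)$; so $\nu$ is concentrated on limits of pairs $(g^{-\infty},g^{\infty})$ with $l_T(g)<\e$. One then shows a current so concentrated satisfies $\langle T,\nu\rangle\le C\e$ for a constant $C=C(\nu,T)$: approximating $\nu$ by rational currents supported ever closer to $L_\e(T)$, their defining conjugacy classes have cyclic words which, outside a vanishing fraction, are tiled by subwords of periodic words $h^{\pm\infty}$ with $l_T(h)<\e$, and each such tile contributes $O(\e)$ to translation length by subadditivity. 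Letting $\e\to 0$ yields $\langle T,\nu\rangle=0$. Alternatively, one can invoke the structure theory of very small trees to reduce to the cases $T$ simplicial --- where leaves of $L(T)$ arise from the elliptic point stabilizers of $T$ --- and $T$ with dense orbits --- where leaves of $L(T)$ are exactly the pairs identified by the Coulbois--Hilion--Lustig limit map $\partial\FN\to\overline{T}$, and so have zero $T$-length --- and treat each piece of $\nu$ by density of rational currents inside these sublaminations.

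The main obstacle, present in both directions, is the passage between combinatorial data about a conjugacy class --- which cylinders of $\partial^2\FN$ its axis enters, and with what frequency --- and metric data about its translation length in the possibly non-simplicial $\R$-tree $T$, where cancellation can be arbitrarily large. Making the estimates ``translation lengths add along the axis'' and ``each tile contributes $O(\e)$'' precise is the technical heart; it is handled either by a careful comparison of $T$ with approximating trees in $\cvn$, bounding the error in the edge formula, or by appealing to the structure theory of very small $\FN$-trees to localize to simplicial and dense-orbit pieces, as carried out by Kapovich and Lustig.
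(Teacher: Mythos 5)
The first thing to say is that the paper does not prove this proposition at all: it is quoted directly from Kapovich--Lustig \cite[Theorem 1.1]{KL10d} and used as a black box, so there is no in-paper argument to compare yours against. I can therefore only measure your sketch against what a proof actually requires.

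Your architecture (cylinder sets, weak-$*$ density of rational currents, the edge formula on $\cvn$, contraposition for one implication and concentration on $L_\e(T)$ for the other) is a reasonable frame, but both central estimates are asserted rather than proved, and both sit exactly where naive arguments fail, because translation length in an $\R$-tree is neither super- nor sub-additive along a factorization of a cyclic word. Concretely: (a) in the implication $\langle T,\nu\rangle=0\Rightarrow Supp(\nu)\subseteq L(T)$, the step ``these translation lengths add up along the axis of $g_k$ up to a controlled error'' does not hold as stated. The available tool, the bounded backtracking property, gives at best $l_T(g_k)\geq\sum_i l_T(v_i)-2m\cdot\mathrm{BBT}(T)+O(1)$ for a factorization into $m$ pieces, and $\mathrm{BBT}(T)$ is a fixed positive constant of the marked tree, in general far larger than $\e$; so counting disjoint copies of $w$ does not by itself yield $l_T(g_k)\geq\e'c/\lambda_k$, and your contradiction evaporates. (b) In the converse implication, ``each such tile contributes $O(\e)$ to translation length by subadditivity'' is likewise unjustified: a long subword of $h^{\pm\infty}$ with $l_T(h)<\e$ can still move a basepoint by about $k\,l_T(h)$ plus twice the distance from the basepoint to the characteristic set of $h$, and that second term is uncontrolled; moreover the claimed tiling is only plausible for leaves in the closure $L_\e(T)$, not for the approximating conjugacy classes $g_k$ themselves. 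These two estimates are essentially the whole content of the Kapovich--Lustig theorem, whose proof runs through careful approximation of $T$ by trees in $\cvn$, the BBT property, and the structure theory of the dual lamination; since you explicitly defer exactly these points to Kapovich--Lustig in your closing paragraph, what you have is an accurate map of where the difficulty lies rather than a proof.
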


We let $\mathbb{P}Curr(\FN)$ denote the space of projective classes
(\emph{i.e.} homothety classes) of currents.  The action of $Out(\FN)$
on $\mathbb{P}Curr(\FN)$ is not minimal, but there is a unique minset
$\mathbb{P}M_N \subseteq \mathbb{P}Curr(\FN)$ that is the closure of
projective currents corresponding to primitive conjugacy classes of
$\FN$ \cite{Mar97},\cite{Kap06}; let $M_N$ denote the preimage of
$\mathbb{P}M_N$ in $\mathbb{P}Curr(\FN)$.

\subsection{Gromov Hyperbolic Spaces}

We give a very brief review of Gromov hyperbolic spaces and their
boundaries.  Let $(X,d)$ be a metric space, and let $p \in X$ be a
basepoint.  For $x,y \in X$, the \emph{Gromov product} of $x$ and $y$
(relative to $p$) is defined as
$$
(x,y)=(x,y)_p := \frac{1}{2}(d(x,p)+d(y,p) - d(x,y))
$$ 
The metric space $(X,d)$ is called \emph{Gromov hyperbolic} if
there is some $\delta \geq 0$ such that for any $x,y,z \in X$, one has
$$
(x,z) \geq \min\{ (x,y), (y,z) \} - \delta
$$ 
If $(X,d)$ is a geodesic metric space, then hyperbolicity of $(X,d)$ also can be characterized by geodesic triangles being \emph{thin}.  

If $(X,d)$ is Gromov hyperbolic, then one says that a sequence of
points $\{x_n\}$ \emph{converges} if $(x_n, x_m) \to \infty$ as $m,n
\to \infty$.  Two convergent sequences $\{x_n\},\{y_n\}$ are
\emph{equivalent} if $(x_n, y_n) \to \infty$.  The \emph{boundary}
$\partial X$ of $X$ is defined to be the collection of equivalence
classes of convergent sequences in $X$; two equivalence classes of
sequences are close in $\partial X$ if any pair of representatives
have large Gromov product for all large $n$.  That all this is
well-defined follows from hyperbolicity.

Given metric spaces $(X,d)$ and $(X',d')$ and a number $C$, a function $f:X \to X'$ is called a $C$-\emph{quasi-isometric embedding} if for all $x,y \in X$
$$
\frac{1}{C}d(x,y)-C \leq d'(f(x),f(y)) \leq Cd(x,y)+C
$$ 
The map $f$ is a \emph{quasi-isometry} if in addition, for any $z' \in X'$, there is $z \in X$ such that
$$
d'(f(z),z') \leq C
$$

If the spaces $X$ and $X'$ are equipped with an action of a group $G$,
one arrives at the obvious notion of $G$-\emph{equivariant
  quasi-isometry}.  Any quasi-isometry $X \to X'$ between Gromov
hyperbolic spaces induces a homeomorphism $\partial X \to \partial
X'$.

A \emph{quasi-geodesic} in $X$ is a quasi-isometrically embedded copy
of an interval of $\mathbb{R}$.  Two quasi-geodesic rays $r,
r':[0,\infty) \to X$ with $r(0)=r'(0)=0$ are \emph{equivalent} if
  their images have finite Hausdorff distance in $X$.  The boundary
  $\partial X$ coincides with the collection of equivalence classes of
  quasi-geodesic rays (based at $p$), where two classes of rays are
  close if a pair of representatives stay close for a large initial
  segment of $[0,\infty)$.

\subsection{The Complex of Free Factors}

The \emph{complex of free factors}, denoted $\mathcal{F}$, has as
vertices conjugacy classes of non-trivial proper free factors of
$\FN$, where conjugacy classes $[A^1], \ldots, [A^{k+1}]$ span a
simplex in $\mathcal{F}$ if and only if there are representatives
$A^1, \ldots, A^{k+1}$ such that after possibly reordering $A^1 <
\ldots < A^{k+1}$.  Regard $\mathcal{F}$ as a metric space by
identifying each simplex with a standard simplex, and endow the
resulting space with the path metric.  Being its 1-skeleton, the
\emph{graph of free factors} is quasi-isometric to the complex of free
factors. When the rank $N=2$ the complex $\mathcal F$ is a discrete
set, but after a natural modification of the definition it becomes
homeomorphic to the Farey graph. In this paper we will always assume
$N\geq 3$.
We have:

\begin{prop}\cite[Main Theorem]{BF11}
 The metric space $\mathcal{F}$ is hyperbolic.
\end{prop}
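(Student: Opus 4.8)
The statement is quoted from \cite{BF11}, so I describe the strategy of that paper. The plan is to transport the geometry of folding paths in Outer space onto $\mathcal{F}$ by a coarse projection, and then verify a combinatorial hyperbolicity criterion of Masur--Minsky/Bowditch type. The first ingredient is a coarse projection $\pi\colon\cvn\to\mathcal{F}$ (one may restrict to the spine). Given a marked metric graph $G$, let $\alpha$ be a shortest loop; being shortest it is embedded, hence represents a primitive conjugacy class, which therefore spans a rank-one proper free factor $A_G=\langle c_G\rangle$. Set $\pi(G)$ to be a uniformly bounded subset of $\mathcal{F}$ around $[A_G]$ --- say all free factors carried by a proper core subgraph of $G$ of bounded combinatorial size containing a shortest loop. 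One then checks that $\pi$ is $Out(\FN)$-equivariant, coarsely well defined (the choices move $\pi(G)$ by a bounded amount), and coarsely Lipschitz: a single Stallings fold, or a bounded change of edge lengths, changes the set of short loops, hence $\pi(G)$, by a bounded amount in $\mathcal{F}$. Conversely, every vertex $[A]$ of $\mathcal{F}$ is realized, in the sense that there is $G_A\in\cvn$ with $G_A$ containing an embedded proper core subgraph carrying $A$, so $\pi(G_A)$ lies within bounded distance of $[A]$.

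Next, for vertices $[A],[B]$ of $\mathcal{F}$ I would join $G_A$ to $G_B$ by a folding path $\{G_t\}$ in $\cvn$ (preceded, if necessary, by an optimal morphism together with a rescaling), using the folding machinery of \cite{BF11}, and declare the ``preferred path'' between $[A]$ and $[B]$ to be $\gamma([A],[B]):=\pi(\{G_t\})$. By the coarse Lipschitz property of $\pi$, this is a coarsely connected subset of $\mathcal{F}$ with uniformly bounded jumps whose endpoints are (coarsely) $[A]$ and $[B]$.

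With these paths in hand I would invoke a hyperbolicity criterion: a geodesic metric space is Gromov hyperbolic provided it carries a family of uniform coarse paths $\gamma(x,y)$, one for each pair of points, such that (i) the path between two points at bounded distance has uniformly bounded diameter, and (ii) triangles are uniformly thin, i.e.\ $\gamma(x,y)$ lies in a uniform neighbourhood of $\gamma(x,z)\cup\gamma(z,y)$. Condition (i) is immediate: two points of $\cvn$ at bounded distance can be joined by a folding path whose total change in metric, hence whose $\pi$-image, is uniformly bounded. Condition (ii) is the substantive point, and all of the real work goes into establishing it.

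The main obstacle, therefore, is the coarse thin-triangle property for the $\gamma$'s, and this is exactly where the geometry of folding paths developed in \cite{BF11} enters. The key is to control how the $\mathcal{F}$-projection moves along a folding path: along $\{G_t\}$ the length of any fixed conjugacy class is a controlled, essentially unimodal function of the parameter, so a given free factor $[A]$ can be ``the short one'' only over a bounded range of $t$, and once the folding path exits a ball about $[A]$ in $\mathcal{F}$ it does not re-enter (a coarse no-backtracking statement). Combining this with the fact that folding paths with a common endpoint fellow-travel in $\cvn$ yields a Morse-type conclusion: the $\pi$-image of \emph{any} path in $\cvn$ from $G_A$ to $G_B$ stays uniformly close to $\gamma([A],[B])$; applied to the two sides of a geodesic triangle this gives the required thin-triangle inequality. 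The delicate analysis of folding paths, their length functions, and their projections is the hard part; once it is carried out, the criterion above applies with uniform constants and $\mathcal{F}$ is hyperbolic.
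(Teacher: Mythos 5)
This proposition is quoted verbatim from \cite{BF11}; the present paper offers no proof of it, so there is nothing internal to compare your sketch against. Judged as an account of the argument in \cite{BF11}, your outline has the right architecture --- a coarse, coarsely Lipschitz projection $\pi:\cvn\to\mathcal F$ defined via short loops and core subgraphs, folding paths as preferred paths, and a combinatorial ``family of paths'' hyperbolicity criterion of Masur--Minsky/Bowditch type --- and the observation that lengths of conjugacy classes are coarsely unimodal along folding paths (so a given factor is short only on a bounded parameter range) is indeed one of the ingredients.

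However, the mechanism you propose for the thin-triangle condition contains a genuine error. You derive it from ``the fact that folding paths with a common endpoint fellow-travel in $\cvn$.'' This is false: Outer space with the Lipschitz metric is not Gromov hyperbolic, geodesics (and folding paths) between nearby pairs of points can diverge, and no Morse property holds upstairs. If fellow-traveling in $\cvn$ were available, hyperbolicity of $\mathcal F$ would follow almost formally, which is precisely what does not happen. The actual content of \cite{BF11} is that the contraction occurs only after projecting to $\mathcal F$: the central technical theorem is a strong contraction/bounded-projection property for the (left and right) projections of points of Outer space \emph{onto a folding path}, measured in $\mathcal F$-distance, proved via the analysis of illegal turns, the unfolding principle, and Bers-type surgery paths. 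One then feeds this contraction property into the Masur--Minsky criterion (a family of paths admitting coarsely Lipschitz, coarsely idempotent, contracting projections forces hyperbolicity), rather than verifying thin triangles directly from behavior in $\cvn$. So your sketch identifies the right scaffolding but replaces the hard and genuinely necessary step with an appeal to a property of Outer space that does not hold.
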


Throughout the sequel, we shall use the term \emph{factor} to mean a
conjugacy class of non-trivial, proper free factors of $\FN$;
oftentimes, we will blur the distinction between conjugacy classes and
the subgroups representing them, since we expect little confusion to
arise from this. A conjugacy class of an element or a finitely
generated subgroup of $F_N$ is {\it simple} if it is contained in a
factor.

There is a coarsely well-defined projection $\pi:\cvn \to
\mathcal{F}$: associate to $T \in \cvn$ the collection of factors
represented by subgraphs of $T/\FN$.  It is noted in \cite[Section
  3]{BF11} that $\pi(T)$ has diameter at most 4 and that if the volume
of an immersion representing a factor $F$ in $T/\FN$ is uniformly
bounded, then $d_{\mathcal{F}}(\pi(T),F)$ is uniformly bounded as
well. The projection $\pi$ descends to a projection
$\CVN\to \mathcal F$, also denoted $\pi$.

Given a number $K$, say that a function $\iota:[0,\infty) \to X$ is a
  \emph{reparameterized quasi-geodesic} if there are $0=t_0< t_1<
  \ldots < t_m <\ldots \in [0,\infty)$ such that
    $diam(\iota([t_i,t_{i+1}])\leq K$ and $|i-j| \leq
    d(\iota(t_i),\iota(t_j))+2$.  

\begin{prop}\cite[Corollary 5.5 and Proposition 9.2]{BF11}\label{P.Uniform}
 Let $G_t$ be a geodesic in $\cvn$. Then $\pi(G_t)$ is a
 reparameterized quasi-geodesic with uniform constant.
\end{prop}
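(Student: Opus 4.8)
The plan is to deduce the statement from two uniform estimates for the projection $\pi$ restricted to $G_t$: a \emph{coarse Lipschitz} bound, controlling the $\mathcal{F}$-diameter of the image of a short sub-path, and a \emph{no-almost-backtracking} bound, saying that $\pi(G_t)$ makes definite linear progress in $\mathcal{F}$ and never again comes close to a factor it has left behind. Granting both, a standard greedy choice of breakpoints $0=t_0<t_1<\cdots$ produces the reparameterized quasi-geodesic. Before starting, recall that we may replace $\mathcal{F}$ by the quasi-isometric graph of free factors, and that we may take $G_t$ to be a folding path (the setting of \cite{BF11}); we use freely the two features of $\pi$ from \cite[Section 3]{BF11}, namely $\diam(\pi(T))\le 4$, and: if a factor $F$ is carried by an immersion into $T/\FN$ of uniformly bounded volume, then $d_{\mathcal{F}}(\pi(T),F)$ is uniformly bounded.

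For the coarse Lipschitz bound, I would use that for $s<t$ there is a natural morphism $G_s/\FN\to G_t/\FN$, and that between normalized times the amount of folding is uniformly controlled. Consequently, a factor $F$ carried by a proper subgraph of $G_s/\FN$ is carried by an immersion into $G_t/\FN$ whose volume is bounded in terms of $t-s$; the bounded-volume criterion then gives $d_{\mathcal{F}}(\pi(G_s),\pi(G_t))\le C(t-s)+C$ for a uniform $C$. In particular, taking the breakpoints $t_i$ equally spaced with $t_{i+1}-t_i$ equal to a suitable uniform constant makes $\diam(\pi([t_i,t_{i+1}]))\le K$ for a uniform $K$, which is the first clause of the definition, and shows $i\mapsto\pi(G_{t_i})$ is uniformly coarsely Lipschitz.

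The main obstacle is the second clause, $|i-j|\le d_{\mathcal{F}}(\pi(G_{t_i}),\pi(G_{t_j}))+2$, i.e.\ that $\pi\circ G_t$ progresses linearly and does not essentially backtrack. Here I would prove a \emph{destruction lemma} for folding: there is a uniform $D$ so that if $F$ is carried by a proper subgraph of $G_{s}/\FN$, then $F$ is carried by no proper subgraph of $G_t/\FN$ for $t\ge s+D$, and more generally $d_{\mathcal{F}}(F,\pi(G_t))$ is eventually large and remains so. This is the genuinely geometric input, and it is where one must use the geometry of folding paths from \cite{BF11}: one follows the images in $G_t/\FN$ of the subgraphs carrying $F$, together with the combinatorial data (gates, illegal turns) of the folding path, and shows that folding definitely increases the portion of $G_t/\FN$ covered by such images, so that after a bounded amount of folding they fill the whole graph and $F$ is no longer carried by a proper subgraph. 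Summing this estimate along the breakpoints forces $d_{\mathcal{F}}(\pi(G_{t_i}),\pi(G_{t_j}))$ to grow at least linearly in $|i-j|$; after passing to a sub-indexing of the $t_i$ (merging the corresponding diameter bounds) one arranges the linear rate to be at least $1$, and with the additive slack $2$ the inequality holds as stated.

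Assembling the two estimates gives exactly the defining properties of a reparameterized quasi-geodesic. Every constant introduced — the morphism bound, the Lipschitz constant $C$, the spacing, $K$, and the destruction constant $D$ — depends only on $N$ and on the structural constants of \cite{BF11}, not on the particular geodesic $G_t$; hence the conclusion holds with uniform constant, as claimed.
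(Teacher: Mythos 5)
This proposition is quoted from \cite{BF11} (Corollary 5.5 and Proposition 9.2); the paper gives no proof, so your sketch must be measured against the argument in that source. Your first estimate (the coarse Lipschitz bound, via the bounded-volume criterion of \cite[Section 3]{BF11}) is fine. The fatal problem is the ``destruction lemma.'' It is false that a factor carried by a proper subgraph of $G_s/\FN$ ceases to be carried after a uniformly bounded amount of folding: a forward-invariant subgraph of small volume carries a factor that stays within bounded $\mathcal{F}$-distance of $\pi(G_t)$ for \emph{all} subsequent time. Indeed, Lemma \ref{combined} of this very paper produces folding rays whose entire projection to $\mathcal{F}$ is bounded; for such a ray no choice of equally spaced (or any linearly growing family of) breakpoints can satisfy $|i-j|\le d_{\mathcal{F}}(\pi(G_{t_i}),\pi(G_{t_j}))+2$. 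This is precisely why the conclusion is only that $\pi(G_t)$ is a \emph{reparameterized} quasi-geodesic: the path is allowed to stall, the breakpoints must be chosen adaptively, and there may be only finitely many of them even for an infinite ray.

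Even after repairing this by choosing breakpoints greedily (so that consecutive ones have definite $\pi$-separation), your argument only yields local progress, which does not rule out large-scale backtracking; the inequality $|i-j|\le d+2$ is a global statement with multiplicative constant $1$. The actual proof in \cite{BF11} does not proceed pointwise along the path at all: it establishes the strong contraction property for projections of folding paths (the technical core of that paper, via left/right Whitehead graphs and the analysis of surviving illegal turns), and then deduces both the hyperbolicity of $\mathcal{F}$ and the uniform unparameterized quasi-geodesic property from a Masur--Minsky--type axiomatic criterion. Finally, your opening reduction ``we may take $G_t$ to be a folding path'' is itself a nontrivial part of \cite[Proposition 9.2]{BF11}: geodesics between two points of Outer space are highly non-unique and need not fellow-travel a folding path, so passing from an arbitrary geodesic to a standard one requires an argument (again resting on the contraction property) rather than a remark.
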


Here and throughout, the phrase \emph{uniform constant} is taken to
mean a constant that depend only on $N=Rank(\FN)$.  

\subsection{Geometry of Outer space}

We now review the Lipschitz distance in $\CVN$, optimal maps, train
track structures and folding paths. For more details the reader is
referred to \cite{fm,bers,BF11}. 

A point of $\CVN$ can be thought of as a graph $G$ equipped with a
marking $\pi_1(G)\cong F_N$ and a metric of volume 1. If $G,G'\in\CVN$
there is a canonical homotopy equivalence $G\to G'$ which commutes
with markings. A map $f:G\to G'$ is a {\it difference of markings} if it
belongs to this homotopy class and has constant slope on every
edge. We denote by $\sigma(f)$ the largest slope, i.e. the Lipschitz
constant for $f$, and we put
$$d(G,G')=\inf\log\sigma(f)$$
where $f$ ranges over all difference of markings. This is the {\it
  Lipschitz distance} in $\CVN$; it is not symmetric. 

Any difference of markings map $f:G\to G'$ with
$d(G,G')=\log\sigma(f)$ is called an {\it optimal map} -- these always
exist. If $f:G\to G'$ is an optimal map, the union of all edges on
which the slope of $f$ is $\sigma(f)$ is the {\it tension graph}
$\Delta_f$. Two directions (i.e. half-edges) in $\Delta_f$ based at a
vertex $v$ are {\it equivalent} if $f$ takes both to the same
direction in $G'$. Equivalence classes are {\it gates}.

A {\it train
  track structure} on a finite graph is a collection of equivalence
relations, one on the set of directions at each vertex, such that at
every vertex there are at least two gates. The tension graph may have
vertices with only one gate, but there is always a subgraph
$\Delta\subset\Delta_f$ with an induced train track structure, and in
fact $\Delta=\Delta_g$ for a perturbation $g$ of $f$. 

Let $\Delta$ be a graph with a train track structure.
A turn (i.e. a pair of directions at a vertex) is {\it illegal} if the
two directions are equivalent, otherwise it is legal. A path in
$\Delta$ is legal if all turns it crosses are legal.

Let $f:G\to G'$ be an optimal map.
A loop $\alpha$ in $G$ is a {\it witness} (or it is {\it maximally
  stretched}) if
$l_{G'}(f_*(\alpha))=\sigma(f) l_G(\alpha)$, where $f_*(\alpha)$
is the immersed loop homotopic to $f(\alpha)$. Equivalently, $\alpha$ is 
contained in $\Delta_f$ and it is legal. There is always a witness
that crosses every edge at most twice and crosses at least one edge
exactly once. In particular, such a loop has length $<2$ and it
represents the conjugacy class of a basis element of $F_N$.

Now suppose that $f:G\to G'$ is an optimal map with $\Delta_f=G$ and
with $\geq 2$ gates at every vertex. For this discussion it is
convenient to rescale $G$ so that the slope of $f$ is 1 on every
edge. Thus we are now viewing $G,G'$ as elements of $cv_N$. 

Set $G_0=G$ and for small $t\geq 0$ let $G_t$ be obtained from $G$ by
identifying initial segments of length $t$ within each gate. We have
natural factorizations $G_0\to G_t\to G'$. A path $G_t$, $t\in [0,L]$
in $cv_N$ from $G$ to $G'$ induced by $f$ is a {\it (greedy) folding
  path} (induced by $f$) if $G_0=G$, $G_L=G'$ and for $t\leq t'$ there are maps
$f_{t,t'}:G_t\to G_{t'}$ such that $f_{t,t}=id$, $f_{0,L}=f$ and
$f_{t_2,t_3}f_{t_1,t_2}=f_{t_1,t_3}$, and so that for any $t_0<L$ the
path $G_t$, $t\in [t_0,t_0+\epsilon]$ is obtained as above by
identifying small segments within each gate with the induced maps
$G_{t_0}\to G_t$. We refer to this particular parametrization as the
{\it natural parametrization}. Given $f:G\to G'$ as above, there is a
unique folding path induced by $f$.

The image of a folding path in $\CVN$ is a folding path in $\CVN$,
usually parametrized by arc-length. Every folding path is a geodesic,
i.e. for $t_1<t_2<t_3$ we have
$d(G_{t_1},G_{t_3})=d(G_{t_1},G_{t_2})+d(G_{t_2},G_{t_3})$, but there
are many geodesics that are not folding paths. In fact, not every pair
of points in $\CVN$ can be connected by a folding path. However, there
is always a {\it standard geodesic} joining a given pair of points: it
is a geodesic which is the concatenation of a path inside a simplex
and a folding path.

\section{Laminations and Dendrites}

An $\FN$-tree $T \in \partial \cvn$ is called \emph{indecomposable} if
for any non-degenerate arcs $I, J \subseteq T$, there are
$g_1,\ldots,g_r \in \FN$ such that $I \subseteq g_1J \cup \ldots \cup
g_rJ$ and such that $g_iJ \cap g_{i+1}J$ is non-degenerate.  The goal
of this section is to prove the following maximality condition about
laminations associated to indecomposable trees.

\begin{prop}\label{P.Maximal}
 Let $T \in \partial \cvn$ be indecomposable.  If $U \in \partial \cvn$ satisfies $L(T) \subseteq L(U)$, then $L(T)=L(U)$.
\end{prop}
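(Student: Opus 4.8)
The plan is to exploit indecomposability of $T$ to show that $L(T)$ is a minimal lamination, i.e. that every leaf of $L(T)$ is dense in $L(T)$, and moreover that $L(T)$ admits no proper nonempty sublamination whose diagonal closure could be strictly enlarged. More precisely, I would argue as follows. First I would recall the standard fact (from \cite{CHL08b} and the work of Coulbois--Hilion--Lustig--Reynolds) that an indecomposable tree $T$ has dense orbits, hence trivial arc stabilizers, and that the dual lamination $L(T)$ is \emph{diagonally closed} and has the property that any two leaves can be ``concatenated'': given leaves $\ell, \ell'$ of $L(T)$ and any $\epsilon>0$, there is a leaf of $L(T)$ that $\epsilon$-fellow-travels $\ell$ for a long time and then $\epsilon$-fellow-travels $\ell'$. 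This concatenation property is the lamination-level shadow of the indecomposability condition $I \subseteq g_1 J \cup \cdots \cup g_r J$ with consecutive overlaps non-degenerate: translating arcs in $T$ to leaf segments via the map $\partial^2 F_N \supseteq L(T) \to T$ (the ``$\mathcal{Q}$-map'' of Coulbois--Hilion--Lustig), an arc of $T$ covered by finitely many overlapping translates of a fixed arc corresponds to a leaf segment built by concatenating translates of a fixed leaf segment. The upshot is that $L(T)$ is minimal and is the diagonal closure of any one of its leaves.

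Now suppose $U \in \partial \cvn$ with $L(T) \subseteq L(U)$; I want $L(U) \subseteq L(T)$. Fix a leaf $\ell_0 \in L(T) \subseteq L(U)$ and an arbitrary leaf $m \in L(U)$. I would like to show $m \in L(T)$, and since $L(T)$ is closed it suffices to approximate $m$ arbitrarily well by leaves of $L(T)$. The idea is: because $m$ and $\ell_0$ are both leaves of the single lamination $L(U)$, and $L(U)$ — being dual to a very small tree — is itself ``transitive enough'' in a weak sense, any finite subword of $m$ appears (up to the $F_N$-action and reversal) as a subword of some leaf of $L(U)$ that also contains long subwords of $\ell_0$; but in fact I want the reverse flavor of statement. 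The cleaner route is to use that $T$ is indecomposable to get that $L(T)$ is \emph{maximal among laminations carried only by the whole group} in the following sense: if $X$ is any lamination with $L(T) \subseteq X$ and $X$ still has trivial ``carrying'' by proper free factors, then a leaf of $X$ cannot escape $L(T)$ because that would produce, via the $\mathcal{Q}$-map for $T$, a leaf segment not shadowing any arc of $T$ while still being a limit of the $(g^{-\infty},g^{\infty})$ with $l_T(g)$ small — a contradiction with how $L(T)$ is built as $\cap_\epsilon L_\epsilon(T)$. So the real content is: a leaf $m$ of $L(U)$ with $m \notin L(T)$ contains some finite subword $w$ that does not occur in any leaf of $L(T)$; since $L(T) \subseteq L(U)$ and $L(T)$ is minimal, every leaf of $L(U)$ that lies in $L(T)$ avoids $w$, while $m$ does not — I must derive a contradiction from this using properties of $U$, specifically that $L(U)$ interacts with its dual tree $U$ the same way $L(T)$ does with $T$.

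Here is the step where I expect the main obstacle, and how I would get around it. The difficulty is that merely knowing $L(T) \subseteq L(U)$ gives a priori no handle on $U$ itself; $L(U)$ could be much bigger than $L(T)$ even if $U$ is, say, simplicial, so I cannot prove minimality of $L(U)$. The fix is to \emph{not} work with $L(U)$ directly but instead to produce a current. Using Proposition~\ref{P.ZeroInter}, I would take a current $\nu$ with $Supp(\nu) \subseteq L(T)$ (such $\nu$ exist — e.g. a limit of $\frac{1}{l_T(g_n)}\eta_{g_n}$ for $l_T(g_n)\to 0$, which lands in $M_N$ and has support in $L(T)$; in fact for indecomposable $T$ one expects $Supp(\nu)=L(T)$). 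Then $\langle T,\nu\rangle = 0$, and since $Supp(\nu)\subseteq L(T)\subseteq L(U)$, Proposition~\ref{P.ZeroInter} again gives $\langle U,\nu\rangle = 0$. Now I would run the argument symmetrically: produce a current $\mu$ with $Supp(\mu)\subseteq L(U)$, get $\langle U,\mu\rangle=0$, and try to conclude $Supp(\mu)\subseteq L(T)$, which would force $L(U)=L(T)$ once I know $L(U)$ is the union of supports of its currents — but this last point is exactly where \emph{surface-like} behavior can fail, since $L(U)$ need not be supported by currents. So the honest plan is: use indecomposability of $T$ to show $L(T)$ is minimal with trivial free-factor carrying, then show directly that \emph{any} lamination $X$ properly containing such an $L(T)$ must contain a leaf carried by a proper free factor OR must fail to be the dual lamination of a very small tree — and the second alternative is killed because $X = L(U)$ by hypothesis. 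Concretely, I would assume $L(U)\supsetneq L(T)$, pick a leaf $m\in L(U)\setminus L(T)$, and use the $\mathcal{Q}$-map $\mathcal{Q}_U: \overline{\partial^2 F_N} \to \widehat{U}$ together with the indecomposability-derived covering property of $T$-arcs to show that the $\mathcal{Q}_U$-image of the sublamination generated by $\ell_0$ is an indecomposable subtree of $U$ on which the lamination generated by $\ell_0$ restricts to $L(T)$; then $m$, being a leaf of $L(U)$ disjoint from $L(T)$, would have to be carried by the complementary part, ultimately by a proper free factor — contradicting arationality-type rigidity of $L(T)$. Closing this gap rigorously is the crux; everything before it is bookkeeping with the $\mathcal{Q}$-map and the definition $L(T)=\cap_{\epsilon>0} L_\epsilon(T)$.
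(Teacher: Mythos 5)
Your proposal has a genuine gap, and in fact two of its load-bearing assertions are false as stated. First, $L(T)$ is \emph{not} minimal for indecomposable $T$: by Proposition \ref{P.MinimalLam}, even in the free indecomposable case it is only the derived set $L'(T)$ that is minimal, while $L(T)$ itself contains finitely many orbits of isolated diagonal leaves, so ``$L(T)$ is the diagonal closure of any one of its leaves'' fails for the isolated ones. Second, and more seriously, your closing step derives a contradiction from a leaf of $L(U)\setminus L(T)$ being ``ultimately carried by a proper free factor, contradicting arationality-type rigidity of $L(T)$'' --- but the hypothesis is only that $T$ is \emph{indecomposable}, not arational. Indecomposable trees can perfectly well have leaves carried by proper free factors (arationality is strictly stronger, per \cite[Theorem 1.1]{R12}), so there is no such rigidity to contradict. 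You also candidly flag that the current-based detour does not close because $L(U)$ need not be a union of supports of currents; I agree, and that route cannot be repaired.

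The mechanism you are missing is the dendrite picture. Since $L(T)\subseteq L(U)$ (after reducing to the case that $U$ has dense orbits, which only enlarges $L(U)$), the quotient $\partial\FN\to\partial\FN/L(U)=\hat U_w$ factors through $\partial\FN/L(T)=\hat T_w$, yielding an equivariant surjection $p:\hat T_w\to\hat U_w$ of dendrites. Indecomposability of $T$ is then used not to prove minimality of the lamination but to force a trichotomy for $p$ (Proposition \ref{3to1}): either $p$ is a homeomorphism, or $\hat U_w$ is a point, or there is an open interval of points in $\hat U_w$ each with more than two $p$-preimages. The third alternative is ruled out by the Coulbois--Hilion $\mathcal Q$-index theorem (Proposition \ref{P.IndexTheorem}), which bounds the set of points of $\hat U_w$ with more than two preimages in $\partial\FN$ to be countable. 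This quantitative finiteness input is what replaces the (unavailable) arationality, and nothing in your outline supplies a substitute for it.
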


To prove this fact, we will need to consider actions by homeomorphisms
of $\FN$ on \emph{dendrites}, which are compact, locally connected,
uniquely arcwise connected metrizable spaces, see e.g. \cite{whyburn}.
The connection to actions in $\partial \cvn$ comes from \cite{CHL07}.

The \emph{weak topology}, also called the \emph{observers' topology}
in \cite{CHL07}, on $T$ has as subbasis the collection of directions
(i.e. complementary components) at
points of $T$; let $T_w$ denote $T$ with the weak topology. Let
$\overline T$ be the metric completion of $T$. Then there are two
topologies 
on $\hat T=\overline T \cup \partial T$: the Gromov (metric) topology
and the weak topology $\hat T_w$ defined in the same way as on $T$.
The weak topology is weaker than the metric topology,
and $\hat T_w$ is a dendrite.  It is shown in \cite{CHL07} that if $T$
has dense orbits, then the quotient space $\partial \FN / L(T)$ is
homeomorphic to $\hat T_w$.  There is a natural embedding of $T_w$
into $\hat T_w$; note that $T_w$ is uniquely arcwise connected but is not
compact.  The action of $\FN$ on $T$ induces an action by
homeomorphisms on $\hat T_w$ for which $T_w$ is invariant.

Note that $T_w$ is the subspace consisting of points of $\hat T_w$
that are contained in the interior of an embedded path in $\hat T_w$,
that is, the set of points $x$ of $\hat T_w$ that are separating.
Call the points of $\hat T_w \ssm T_w$ \emph{endpoints}.  Connected
subsets of $\hat T_w$ are path connected.  Since the metric topology
agrees with the weak topology on finite subtrees of $T$, we have that
segments in $T$ are segments in $T_w$, and tripods in $T$ are tripods
in $T_w$.  Hence the action of $\FN$ on the space $T_w$ is very small.
Any segment in $\hat T_w$ with endpoints in $\hat T_w \ssm T_w$ meets
$T_w$ in an open dense sub-segment.  If $T$ is indecomposable, then so
is $T_w$.

\begin{prop}\label{3to1}
Let $p:X\to Y$ be a surjective map between two dendrites.  Assume that:
\begin{enumerate}
\item [(i)] $X=\hat T_w$ for $T\in\partial\cvn$ indecomposable, and
\item [(ii)] $\FN$ acts on $Y$, and $p$ is $\FN$-equivariant.
\end{enumerate}
Then one of the following holds:
\begin{enumerate}
 \item [(a)] $p$ is a homeomorphism,
 \item [(b)] $Y$ is a point, or
 \item [(c)] there is an open interval $Z\subset Y$ such that for every $z\in Z$ we have $|p^{-1}(z)|>2$.
\end{enumerate}
\end{prop}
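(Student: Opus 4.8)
The plan is to analyze the fibers of $p$ and show that if neither (a) nor (b) holds, then (c) must. The first step is to set up the ``point-identification'' lamination of $p$: declare $\xi \sim_p \xi'$ for $\xi,\xi' \in X$ if $p(\xi)=p(\xi')$, and observe that because $X=\hat T_w$ is a dendrite and $p$ is a surjective continuous map of dendrites, the fibers $p^{-1}(y)$ are closed connected subsets (subdendrites) of $X$. Since $\FN$ acts on $X$ and on $Y$ with $p$ equivariant, the partition of $X$ into fibers is $\FN$-invariant. I would first dispose of the degenerate case: if some fiber $p^{-1}(y)$ has nonempty interior in $X$, then by equivariance and indecomposability (which, via the preceding paragraph, $T_w$ inherits, and indecomposability lets one cover any arc by finitely many overlapping translates of any other arc) one shows that the single fiber spreads to cover all of $X$, forcing $Y$ to be a point --- case (b). So from now on I assume every fiber is \emph{nowhere dense} (in particular has empty interior).

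Next I would quantify ``how much $p$ collapses'' by looking at branch structure. Call $y \in Y$ a point where $p$ is \emph{injective} if $|p^{-1}(y)|=1$; otherwise the fiber is a nondegenerate subdendrite. The key dichotomy to establish: either $p$ is injective on a dense subset of $X$ and then (using that both spaces are dendrites and a collapsed arc would force a fiber with interior, already excluded) $p$ is a homeomorphism --- case (a); or there is an arc $[a,b] \subseteq X$ and a nondegenerate subarc $J \subseteq Y$ such that $p^{-1}(z)$ is nondegenerate for a dense (hence, by upper semicontinuity of the diameter of fibers, comeager) set of $z \in J$. The crucial upgrade is from ``$|p^{-1}(z)| \geq 2$ on a dense set'' to ``$|p^{-1}(z)| > 2$ on an open set.'' Here I would use the $\FN$-action decisively: the set of $z$ with a nondegenerate fiber is $\FN$-invariant; pick one such $z_0$, so $p^{-1}(z_0)$ contains an arc $I_0 \subseteq T_w$. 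By indecomposability of $T_w$, finitely many translates $g_1 I_0, \dots, g_r I_0$ chain-cover any given arc of $T$, with consecutive overlaps nondegenerate; each $g_i I_0$ lies in the fiber over $g_i z_0$. Along the covered arc one then finds an interval $Z \subseteq Y$ over which at least two of these translate-arcs genuinely overlap in a way that is not collapsed to a single fiber-arc, producing at every $z \in Z$ at least three distinct preimages: two endpoints coming from the branching of the chain of arcs plus an interior point, i.e. $|p^{-1}(z)| > 2$.

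The main obstacle I anticipate is exactly this last step: ruling out the intermediate possibility that $p$ collapses a ``thin'' family of arcs with every nondegenerate fiber having \emph{exactly} two preimages (this would be the analogue of a curve-complex monotone map and must be excluded). The mechanism to kill it is that a $\leq 2$-to-$1$ collapsing lamination on $\hat T_w$ would have to be ``orientable/laminar'' in a very rigid way incompatible with indecomposability: indecomposability says the arcs of $T$ are mixed by the $\FN$-action so thoroughly that no consistent $2$-to-$1$ identification can be $\FN$-equivariant unless it collapses everything. Concretely, I would argue that if all nondegenerate fibers had cardinality $2$, then $p$ would be, off a meager set, a local homeomorphism onto its image in the branch-free directions, and the induced identification on $\partial\FN$ would be a proper sub-lamination strictly between the trivial one and $\partial\FN/L(T)$; but $\hat T_w \cong \partial\FN/L(T)$ already realizes the maximal such quotient for indecomposable $T$ (this is the content lying behind Proposition~\ref{P.Maximal}), so any further nontrivial equivariant quotient must be drastic --- it collapses a fiber with interior, sending us back to case (b), contradiction. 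Assembling these pieces gives the trichotomy (a), (b), (c).
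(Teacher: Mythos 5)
Your proposal has a fatal error at the very first step: you assert that the fibers $p^{-1}(y)$ of a continuous surjection between dendrites are connected subdendrites. This is false (already a non-monotone map of an interval onto an interval has disconnected two-point fibers), and it is false in the intended application, where $p$ is the factor map $\partial\FN/L(T)\to\partial\FN/L(U)$ and the relevant fibers are typically \emph{finite, totally disconnected} sets of points --- that is exactly why conclusion (c) is stated in terms of cardinality $|p^{-1}(z)|>2$ rather than in terms of collapsed arcs. If fibers really were connected, the proposition would trivialize: any non-injective fiber would contain a collapsed arc and you would land in case (b) immediately. Everything downstream of this claim --- ``every fiber is nowhere dense,'' ``the fiber is a nondegenerate subdendrite,'' ``$p^{-1}(z_0)$ contains an arc $I_0$'' --- collapses with it. Separately, your closing paragraph is circular: you appeal to the maximality of the quotient $\partial\FN/L(T)$, i.e. to ``the content lying behind Proposition~\ref{P.Maximal},'' but Proposition~\ref{P.Maximal} is deduced \emph{from} the present proposition, so it cannot be used here.

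The genuinely hard step --- upgrading from ``some point has $\geq 2$ preimages'' to ``an open interval of points each with $>2$ preimages'' --- is precisely the part you leave as a sketch (``one then finds an interval $Z$ over which at least two of these translate-arcs genuinely overlap \dots''), and the missing ideas are concrete. The paper first disposes of collapsed arcs (indecomposability plus equivariance forces $Y$ to be a point), then uses Whyburn's theorem that a dendrite has only countably many branch points to choose, by pigeonhole, identified points $a,b\in T$ whose common image has \emph{valence 2}; a second pigeonhole produces identified pairs $c,d\in[a,b]$ arbitrarily close together. Indecomposability covers $[a,b]$ by overlapping translates $g_i[c,d]$, and an elementary configuration lemma (if two overlapping segments each have identified endpoints, the endpoints of their overlap are identified) propagates the identification to produce arbitrarily many points $a=t_0<\dots<t_m=b$ all mapping to the single valence-2 point $y$. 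Since $y$ has only two directions, once $m\geq 3$ two of the image dendrites $p([t_i,t_{i+1}])$ overlap in a nondegenerate interval $Z$, and every $z\in Z$ then has at least two preimages in each of two disjoint subintervals of $[a,b]$, giving $|p^{-1}(z)|\geq 4>2$. None of these mechanisms (the valence-2 reduction, the configuration lemma, the accumulation of preimages $t_i$) appears in your proposal, so the argument as written does not establish the trichotomy.
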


Before we begin the proof we will make an observation. Assume that
the conclusion of the above proposition fails.  Suppose $[a,b]$ and $[c,d]$ are two
segments in $X$ with $[a,b]\cap [c,d]=[u,v]$ a nondegenerate
segment. 

\begin{claim}
Assume that
the conclusion of the above proposition fails.  Suppose $[a,b]$ and $[c,d]$ are two segments in $X$ with $[a,b]\cap [c,d]=[u,v]$ a nondegenerate segment. If $p(a)=p(b)$ and if $p(c)=p(d)$, then $p(u)=p(v)$.
\end{claim}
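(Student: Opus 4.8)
The plan is to argue by contradiction. Suppose $p(u)\ne p(v)$; I will produce a non-degenerate open interval $Z\subset Y$ with $|p\inv(z)|>2$ for every $z\in Z$, which contradicts the assumed failure of alternative (c) of the Proposition. Since $p(u)\ne p(v)$ the arc $[p(u),p(v)]\subset Y$ is non-degenerate, and I take $Z$ to be a non-degenerate open subarc of the interior of $[p(u),p(v)]$ chosen to avoid the (at most two) points $p(a)=p(b)$ and $p(c)=p(d)$; thus every $z\in Z$ lies in the interior of $[p(u),p(v)]$ and differs from both $p(a)$ and $p(c)$.

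Fix such a $z$ and write $C_u$, $C_v$ for the components of $Y\ssm\{z\}$ containing $p(u)$, $p(v)$; these are distinct because $z$ separates $p(u)$ from $p(v)$ along $[p(u),p(v)]$. Since the image of a path in a dendrite contains the arc between its endpoints, and $[u,v]\subseteq[a,b]\cap[c,d]$, we have $[p(u),p(v)]\subseteq p([u,v])$, so there is $w\in(u,v)$ with $p(w)=z$. Hence the sets $K:=p\inv(z)\cap[a,b]$ and $K':=p\inv(z)\cap[c,d]$ are non-empty (each contains $w$), while $K\cap K'\subseteq[a,b]\cap[c,d]=[u,v]$.

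The heart of the argument is then to show $|K|\ge 2$, $|K'|\ge 2$, and $K\ne K'$. For $|K|\ge 2$: orient $[a,b]$ to read $a,u,v,b$; if $|K|\le 1$ then $[a,b]\ssm K$ has at most two connected pieces, each of which, being connected and disjoint from $p\inv(z)$, maps into a single component of $Y\ssm\{z\}$. Each piece meets $\{a,b\}$ and $p(a)=p(b)$, so all pieces map into one component $C_{p(a)}$; but $u$ and $v$ lie in these pieces as well, forcing $C_u=C_{p(a)}=C_v$, a contradiction. The same argument gives $|K'|\ge 2$. For $K\ne K'$: if $K=K'$ then $K=K\cap K'\subseteq[u,v]$, so $p\inv(z)$ is disjoint from $[a,u]$ and from $[v,b]$; thus $p([a,u])$ and $p([v,b])$ each lie in one component of $Y\ssm\{z\}$, the first containing $p(a)$ and $p(u)$, the second containing $p(v)$ and $p(b)=p(a)$ — again forcing $C_u=C_v$, a contradiction. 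Since $|K|,|K'|\ge 2$ and $K\ne K'$, we get $|p\inv(z)|\ge|K\cup K'|\ge 3$.

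As $z\in Z$ was arbitrary, alternative (c) of the Proposition holds, contrary to hypothesis; hence $p(u)=p(v)$. I expect the component-counting above to be the only real content, the key point being that $K\cap K'$ is forced into $[u,v]$, which is exactly what keeps the ``extra'' preimages contributed by $[a,b]$ and by $[c,d]$ from coinciding. The rest is routine bookkeeping: that deleting $p(a),p(c)$ still leaves an open subarc of the interior of $[p(u),p(v)]$, that in a dendrite the image of a path contains the arc joining its endpoints, and that degenerate configurations are harmless — if $[a,b]=[u,v]$ then $p(u)=p(a)=p(b)=p(v)$ outright, and if $[a,u]$ or $[v,b]$ degenerates the component arguments are unaffected since they use only $p(a)=p(b)$ and $p(c)=p(d)$.
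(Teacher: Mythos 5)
Your proof is correct. The overall strategy is the one the paper uses — assume $p(u)\neq p(v)$ and contradict the failure of alternative (c) by exhibiting an open interval $Z$ all of whose points have at least three preimages — but your execution is genuinely different: the paper draws the three possible configurations of $[a,b]$ and $[c,d]$ in the tree $X$ (collinear overlap, both of $[c,u]$ and $[v,d]$ branching off, one of them degenerate) and in each case makes a configuration-specific choice of $Z$ together with an explicit list of subintervals each contributing a preimage. You instead take $Z$ inside the interior of $[p(u),p(v)]$ (punctured at $p(a)$ and $p(c)$) and run a single uniform count via $K=p^{-1}(z)\cap[a,b]$ and $K'=p^{-1}(z)\cap[c,d]$: the separation of $p(u)$ from $p(v)$ by $z$ forces $|K|\geq 2$ and $|K'|\geq 2$, and the observation that $K\cap K'\subseteq[u,v]$ while $u,v\notin p^{-1}(z)$ forces $K\neq K'$, giving $|K\cup K'|\geq 3$. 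This buys you freedom from the case analysis and the figure, and it absorbs the degenerate configurations automatically; the paper's version is more concrete about where the three preimages live. The only points that need care in your version — that $[p(u),p(v)]\subseteq p([u,v])$ (continuity plus unique arcwise connectedness), that interior points of arcs in a dendrite separate the endpoints, and that deleting $p(a)$ and $p(c)$ still leaves a nondegenerate open subarc — are all standard and you flag them correctly.
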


The proof of the claim uses only that dendrites are uniquely arcwise connected.

\begin{proof}

First note that up to symmetry, there are 3 possible configurations, which are shown in Figure
\ref{4points}. 

\begin{figure}[h]
\includegraphics[scale=0.75]{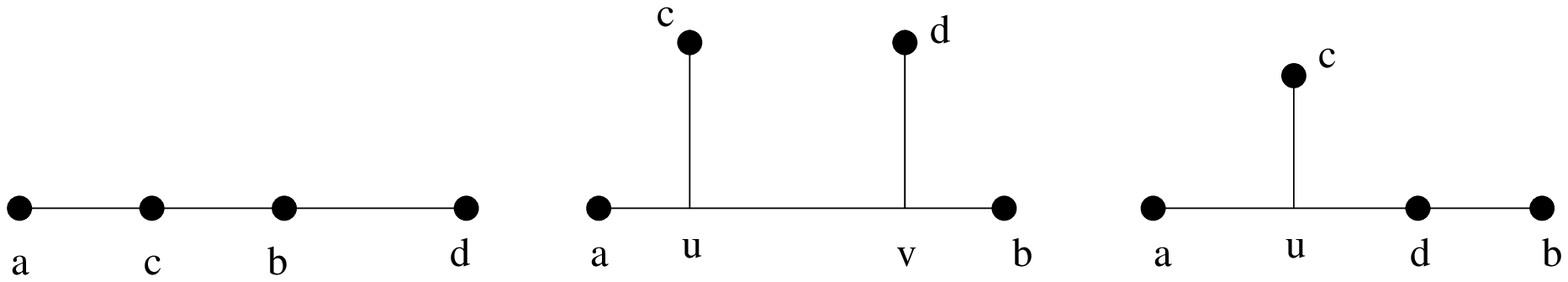}
\caption{}
\label{4points}
\end{figure}

We will contradict the assumption that Proposition \ref{3to1} fails.

For the first configuration, if $p(a)=p(b)=r$ and $p(c)=p(d)=s$ but
$r\neq s$, then take for $Z$ the open interval $(r,s)$. Every $z\in Z$
has a preimage point in each interval $(a,c),(c,b),(b,d)$, a contradiction.

In the second configuration, if $p(u)\neq p(v)$, take $Z=(p(u),p(v))$,
and notice that the preimage of each $z\in Z$ intersects $[a,b]$ in at
least two points, and $[c,u]\cup [d,v]$ in at least one point, a contradiction. 

In the last configuration, if $p(u)\neq p(d)=p(c)$ take
$Z=(p(u),p(d))$.  Each $z\in Z$ has at least two preimages in $(a,b)$
and at least one in $(c,u)$, a contradiction.
\end{proof}

\begin{proof}[Proof of Proposition \ref{3to1}]
First note that if $p$ collapses a nondegenerate segment, then
indecomposability of $X$ and equivariance forces $p$ to be constant, implying that $Y$ is a
point.  So assume that $p$ does not collapse any non-degenerate
interval and that $p$ is not a homeomorphism. This gives that $p$ is
not injective, so there are distinct $a,b\in X$ with
$p(a)=p(b)$. By the pidgeon hole principle, after replacing $[a,b]$ by a smaller interval, we may
assume that $a,b\in T$ and that $p(a)=p(b)$ has valence 2; indeed, since $Y$ has a countable basis, $Y$ contains at most countably
many points with valence $>2$ (this is a theorem of Whyburn).

Again by the pidgeon hole principle, there are distinct $c,d\in [a,b]$ with $p(c)=p(d)$ and with the
$T$-distance between $c,d$ arbitrarily small. Apply indecomposability
to $I=[a,b]$ and $J=[c,d]$ to deduce that $I\subset\cup g_i(J)$ for
$i=1,2,\cdots,k$ and $g_i(J)\cap g_{i+1}(J)$ is nondegenerate. We may
also assume that $k$ is minimal, so in particular $g_1(J)$ and
$g_k(J)$ will contain the endpoints of $[a,b]$. Apply the Claim
to the segments $I$ and $g_i(J)$ (by equivariance, the endpoints of
$g_i(J)$ are mapped to the same point). Thus the endpoints of
$g_i(J)\cap I$ map to the same point $y_i$ in $Y$. We now claim that
$p(a)=p(b)=y_1=\cdots=y_k$. It is clear that $p(a)=y_1$ since $a$ is
an endpoint of $g_1(J)\cap I$ (up to switching $a$ and $b$) and
similarly $p(b)=y_k$. To see that $y_1=y_2$ apply the Claim to
$g_1(J)$ and $g_2(J)$ etc.

We now have points $a=t_0<t_1<\cdots<t_m=b$ in $[a,b]$ with $p(t_i)=y$
for every $i$. We may take $m$ as large as we want by making $J$
small. The images of the intervals $[t_i,t_{i+1}]$ are dendrites
$D_i$ containing $y$, and since the valence of $y$ is 2, as soon as we
have $m\geq 3$ two of the dendrites, say $D_i$ and $D_j$, will have
nondegenerate overlap. Take $Z$ to be an open interval in the
overlap. Then any point in $Z$ will have at least two preimages in
$[t_i,t_{i+1}]$ and at least two in $[t_j,t_{j+1}]$.
\end{proof}

We are now in position to prove Proposition \ref{P.Maximal}.  For the
proof we will need the main result of \cite{CH10}; we note that if $T
\in \partial \cvn$ has dense orbits, then the map $Q$ used to define
$Q$-\emph{index} in \cite{CH10} is the quotient map $Q=Q_T:\partial
\FN \to \partial \FN/L(T)=\hat{T}_w$. Here is a simplified version of
the result of Coulbois-Hilion:

\begin{prop}\label{P.IndexTheorem}\cite[Theorem 5.3]{CH10}
 Let $T$ be a very small $\FN$-tree with dense orbits, and let
 $Q=Q_T:\partial \FN \to \partial \FN/L(T)=\hat{T}_w$. Then there are
at most countably many points $z \in \hat{T}_w$ for which
 $|Q^{-1}(z)|>2$.
\end{prop}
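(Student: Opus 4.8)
The plan is to deduce the statement from the $Q$-index theory of Coulbois--Hilion--Lustig, via a Poincar\'e--Hopf / Gauss--Bonnet type counting argument. Since $T$ has dense orbits it has trivial arc stabilizers, and since it is very small its point stabilizers are trivial or maximal cyclic. For $P\in\hat T_w$ the group $\Stab(P)$ acts on the fiber $Q^{-1}(P)\subseteq\partial\FN$, and one defines the \emph{$Q$-index} of the $\FN$-orbit of $P$ by
$$\mathrm{ind}(P)=\max\bigl\{\,0,\ 2\,\mathrm{rk}(\Stab(P))-2+\bigl|Q^{-1}(P)/\Stab(P)\bigr|\,\bigr\},$$
and the index of $T$ by $\mathrm{ind}(T)=\sum_{[P]}\mathrm{ind}(P)$, the sum over $\FN$-orbits of points of $\hat T_w$. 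The two facts to establish are: (1) $\mathrm{ind}(T)<\infty$ --- in fact $\mathrm{ind}(T)\le 2N-2$; and (2) if $|Q^{-1}(z)|>2$ then either $\Stab(z)\ne 1$ or $\mathrm{ind}(z)>0$.

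Fact (2) is immediate: if $\Stab(z)=1$ then $\mathrm{ind}(z)=\max\{0,|Q^{-1}(z)|-2\}\ge 1$ as soon as $|Q^{-1}(z)|\ge 3$; and if $\Stab(z)\ne 1$ there is nothing to check. Granting (1), there are only finitely many $\FN$-orbits $[P]$ with $\mathrm{ind}(P)>0$. There are also only countably many points of $\hat T_w$ with nontrivial stabilizer: such a point has cyclic stabilizer $\langle g\rangle$ with $g^{\pm\infty}\in Q^{-1}(P)$ both fixed by $\langle g\rangle$, so $\mathrm{ind}(P)\ge 2\cdot 1-2+2=2>0$, and these therefore also fall into finitely many orbits (alternatively this is a Kurosh-type bound on elliptic subgroups of a very small tree). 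Each $\FN$-orbit is countable because $\FN$ is. Hence $\{z\in\hat T_w:|Q^{-1}(z)|>2\}$ is contained in a finite union of countable orbits, so it is countable, which is exactly the assertion.

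The real content, and the step I expect to be the main obstacle, is the index inequality $\mathrm{ind}(T)\le 2N-2$. The plan here is to approximate $T$ by a directed system of finite subtrees (or equivalently to resolve the $\FN$-action by morphisms from simplicial trees via the Rips machine / train-track machinery), each carrying the combinatorial data of $Q$, and to track how branch points and fiber cardinalities behave in the limit. The point is that a fiber with large cardinality forces a corresponding amount of branching in the quotient graph $T/\FN$, or a corresponding contribution from the rank of a point stabilizer; both are constrained by the fixed topological budget $\chi=1-N$, i.e. by $\mathrm{rk}(\FN)=N$, and one must show this budget cannot be exceeded in the limit even though $Q$ collapses the Cantor set $\partial\FN$ onto the dendrite $\hat T_w$. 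The delicate part is precisely that $Q$ is generically $2$-to-$1$ along the leaves of $L(T)$, so what must be bounded is the total \emph{excess} of identifications beyond this generic two; incorporating non-free point stabilizers into the count is where the term $2\,\mathrm{rk}(\Stab(P))-2$ comes in, and verifying that the excess is localized at countably many fibers and sums to at most $2N-2$ is the heart of the argument.

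I would then assemble the pieces as in the previous paragraph: index bound $\Rightarrow$ finitely many positive-index orbits $\Rightarrow$ countably many positive-index points; separately, countably many points with nontrivial stabilizer; and by Fact (2) the set $\{|Q^{-1}(z)|>2\}$ lies in the union of these, hence is countable. (In the present paper the proposition is simply quoted from Coulbois--Hilion \cite{CH10}, where exactly this index machinery is developed in full, so here it suffices to invoke that reference.)
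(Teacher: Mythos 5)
Your proposal is correct and in line with what the paper does: the paper offers no proof of this proposition, simply quoting it as a simplified form of Theorem 5.3 of Coulbois--Hilion \cite{CH10}, whose $Q$-index machinery (the bound $\mathrm{ind}_Q(T)\le 2N-2$ for very small trees with dense orbits) is exactly what you defer to. Your unpacking of how countability follows from that bound --- positive-index orbits are finite in number, points with nontrivial (necessarily cyclic) stabilizer also lie in finitely many orbits, and each orbit is countable --- is sound and matches the way the cited result is obtained, so no further argument is needed here.
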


\begin{remark}
 If $T$ in the statement of Proposition \ref{P.Maximal} is of
 \emph{pseudo-surface type} (defined in \cite{CH10}), then the
 statement follows immediately from \cite[Theorem 5.10]{CH10}, and for
 geometric trees the statement follows from the quasi-isometric
 classification of leaves. Our proof of Proposition \ref{P.Maximal}
 is new in the 
 indecomposable \emph{pseudo-Levitt} case.
\end{remark}

\begin{proof}[Proof of Proposition \ref{P.Maximal}]
 Suppose that $U \in \partial \cvn$ satisfies $L(T) \subseteq L(U)$.
 It follows from \cite{Lev94} that $U$ can be assumed to have dense
 orbits; indeed, if $U$ does not have dense orbits, then we can
 collapse the simplicial part of $U$ to get a tree with dense orbits,
 and one easily sees from the definition of $L(\cdot)$ that the
 associated lamination can only be enlarged; see \cite{Lev94} or
 \cite{R12} for details.  One has that the quotient map $\partial \FN
 \to \partial \FN/L(U) =\hat U_w$ factors through $\partial \FN \to
 \partial \FN/L(T)=\hat T_w$, so we get a surjective map $p:\hat T_w
 \to \hat U_w$, which is $\FN$-equivariant.

 Now apply Proposition \ref{3to1}.
 Since $U$ contains more than one point, conclusion (b) is not
 possible.  If conclusion (c) holds, then there are uncountably many
 points of $\hat U$ whose pre-image in $\partial \FN$ contains
 strictly more than two points; but this is impossible by Proposition \ref{P.IndexTheorem}.  Hence, $p$ is a homeomorphism, so $L(T)=L(U)$.
\end{proof}

\section{Arational Trees}
We recall a notion of reduction for very small
trees, introduced in \cite{R12}.  For $T \in \partial \cvn$ and $F$ a
factor, say that $F$ \emph{reduces} $T$ if $F$ acts with dense orbits
on some subtree $Y \subseteq T$.  It should be
emphasized that $Y$ can consist of a single point.  If $Y$ contains
two points, then $Y$ necessarily has infinite diameter, and in this
case the minimal subtree $T_F$ for $F$ is dense in $Y$.  

Use $\mathcal{R}(T)$ to denote the set of all factors
reducing $T$.  It is noted in \cite{R12} that if $F'$ is a factor
carrying a leaf of $L(T)$, then there is $F \in \mathcal{R}(T)$ with
$F \leq F'$; so, regarded as subsets of $\mathcal{F}$,
$\mathcal{R}(T)$ is 1-dense in the set of all factors carrying a leaf
of $L(T)$.

When $\mathcal{R}(T)=\emptyset$, the tree $T$ is arational;
this is equivalent to the statement that no leaf of $L(T)$ is carried
by a factor by \cite{R12}. Toward establishing an intuitive
analogy with surfaces, we note that the analogous laminations are
precisely the arational laminations--\emph{i.e.} the minimal and
filling laminations.

We have the following classification of arational trees:

\begin{prop}\cite[Theorem 1.1]{R12}
 Let $T \in \partial \cvn$.  The following are equivalent:
 \begin{enumerate}
  \item [(i)] $T$ is arational,
  \item [(ii)] $T$ is indecomposable, and if $T$ is not free, then $T$
    is dual to an arational measured lamination on a surface with one
    boundary component.
 \end{enumerate}
\end{prop}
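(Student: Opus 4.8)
The proof splits into the two implications (i)$\Rightarrow$(ii) and (ii)$\Rightarrow$(i), and the serious content is in the forward direction, where I want to extract the indecomposability and, in the non-free case, the surface structure.

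\medskip

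\noindent\textbf{Plan for (i)$\Rightarrow$(ii).} Assume $T$ is arational, so $\mathcal R(T)=\emptyset$; in particular no proper factor fixes a point and no proper factor acts with dense orbits on its minimal subtree. The first step is to reduce to the dense-orbit case: if $T$ had a nondegenerate simplicial part, collapsing all but one orbit of edges would produce an invariant simplicial quotient, and the point stabilizers / edge stabilizers there would be carried by proper factors (using that $N\ge 3$ and the structure of very small simplicial trees via Bass--Serre theory), contradicting arationality; so $T$ has dense orbits. Next I would prove indecomposability directly from the definition. Suppose $T$ has dense orbits but is not indecomposable; the plan is to use the mixing/decomposition structure theory of Levitt--Gaboriau--style decompositions (as packaged in \cite{R12},\cite{CHL07}) to produce either a nondegenerate arc whose $\FN$-translates do not chain-cover $T$, and then build from the failure of mixing an invariant subtree or a transverse family whose stabilizers are proper factors --- again contradicting $\mathcal R(T)=\emptyset$. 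Concretely, a non-mixing dense-orbit tree carries a nontrivial transverse covering, the vertex trees of which are acted on by proper subgroups with dense orbits, and one argues these subgroups lie in proper factors. Finally, if $T$ is indecomposable but \emph{not} free, then some nontrivial point stabilizer $H$ exists; since $T$ is very small, $H$ is (maximal) cyclic along arcs but point stabilizers can be larger, and the Coulbois--Hilion index theory (Proposition \ref{P.IndexTheorem}) together with the geometric/pseudo-surface trichotomy of \cite{CH10},\cite{R12} forces $T$ to be dual to a measured lamination on a surface $\Sigma$ with one boundary component whose boundary word generates the point stabilizer; arationality of $T$ then translates into arationality (minimal and filling) of that lamination, since a non-filling or non-minimal lamination on $\Sigma$ would give a subsurface carrying a leaf, hence a proper factor reducing $T$.

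\medskip

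\noindent\textbf{Plan for (ii)$\Rightarrow$(i).} Conversely, suppose $T$ is indecomposable. I claim $\mathcal R(T)=\emptyset$. If some proper factor $F$ fixed a point of $T$, then $F$ is elliptic; but an indecomposable tree has trivial arc stabilizers and one checks that a nontrivial elliptic subgroup in an indecomposable $T$ must be (contained in) a point stabilizer that is not a free factor unless $T$ is the surface case --- so in the free case there is nothing to check, and in the surface case one uses that the only elliptic subgroups are conjugates of the cyclic boundary subgroup, which is not a proper free factor (the boundary word is not primitive in the relevant sense). If instead a proper factor $F$ acted with dense orbits on a nondegenerate subtree $Y\subseteq T$, then $Y$ and its $\FN$-translates would form a proper transverse family, contradicting indecomposability (indecomposable trees admit no nontrivial transverse coverings, by the chaining property applied to an arc inside $Y$ and an arc straddling the ``boundary'' of $Y$). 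For the surface subcase one additionally invokes that an arational measured lamination on $\Sigma$ is dual to an indecomposable tree and its leaves are not carried by proper subsurfaces, hence not by proper factors of $\FN=\pi_1(\Sigma)$; combined with the characterization $\mathcal R(T)=\emptyset\iff$ no leaf of $L(T)$ carried by a factor (from \cite{R12}), this finishes it.

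\medskip

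\noindent\textbf{Main obstacle.} The hard part is the forward direction in the non-free case: upgrading ``indecomposable with a nontrivial point stabilizer'' to ``geometric, dual to a one-holed surface lamination.'' This is exactly where the Coulbois--Hilion $Q$-index machinery \cite{CH10} and the finer classification of \cite{R12} do the real work --- ruling out the ``pseudo-Levitt'' and exotic geometric possibilities and pinning down that the surface has a single boundary component (so that $\FN$ is free of the right rank and the point stabilizer is the boundary subgroup). The transverse-covering / mixing arguments in the dense-orbit reduction are standard once the right references are cited, and the converse direction is comparatively soft.
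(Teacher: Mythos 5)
This proposition is not proved in the paper: it is quoted from \cite{R12}, and the only thing the text adds is a remark sketching an alternative route through the Rips machine and geometric resolutions (the quasi-surface trichotomy of Lemma \ref{alternative} and the folding-path approximation argument of Lemma \ref{combined}). So your proposal should be judged as a reconstruction of Reynolds' proof, and measured that way it has one genuine gap.

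The gap is in the direction (ii)$\Rightarrow$(i), which you describe as ``comparatively soft.'' Your argument there is: if a proper factor $F$ acts with dense orbits on a nondegenerate subtree $Y\subseteq T$, then the $\FN$-translates of $Y$ form a transverse family, contradicting indecomposability. This does not work. A transverse family requires distinct translates to meet in at most a point, and nothing forces $gY\cap Y$ to be degenerate for $g\notin F$; indecomposability only says that any arc is chain-covered by translates of any other arc, which is perfectly compatible with a proper subgroup acting with dense orbits on a proper subtree (for instance, a finite-index subgroup acts with dense orbits on all of $T$, so the restriction to \emph{free factors} is doing essential work that a purely tree-topological argument cannot see). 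The true content here is exactly Proposition \ref{P.MinimalLam}: for $T$ free and indecomposable, no leaf of $L'(T)$ is carried by a proper free factor --- this is the main theorem of \cite{R10a} together with \cite{CHR11}, proved via the structure of the dual lamination and band complexes, not by a transverse-covering observation; in the surface case one argues via the $F$-cover and compactness of its convex core, as in the proof of Proposition \ref{P.MinimalLam}(ii). So the implication you call soft is where the hardest input of the theorem is hidden. A secondary soft spot, in the forward direction, is the assertion that the stabilizers of the vertex trees of a transverse covering ``lie in proper factors'': vertex-tree stabilizers of very small trees need not be free factors, and one must pass through the skeleton of the covering and the structure theory of very small simplicial trees (as in Lemma \ref{L.Vertex} and \cite[Lemma 4.1]{BF94}) to extract an elliptic factor; as written this step is asserted rather than argued, though it is at least correctly delegated to \cite{R12}.
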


When $T$ is geometric one can prove the equivalence of (i) and (ii)
using the methods of Section 6, see particularly the proof of Lemma
\ref{alternative}. When $T$ is non-geometric, one forms a geometric
resolution and a folding path converging to $T$. In general, under
folding parts of the approximating graphs become geometric. When $T$
is also arational, the folding path gives a sequence of strong
approximations without forward invariant subgraphs whose unrescaled
volumes go to 0, and $T$ is free and indecomposable (see the proof of
Lemma \ref{combined} where a similar argument is used).

If $X \subseteq \partial^2 \FN$, say that a leaf $l=(x,y) \in
\partial^2 \FN$ is \emph{diagonal} over $X$ if there are leaves
$(x_1,x_2),(x_2,x_3), \ldots,(x_{r-1},x_r) \in X$, such that $x=x_1$
and $y=x_r$; and say that $X$ is \emph{diagonally closed} if every
leaf that is diagonal over $X$ belongs to $X$.  Laminations associated
to trees are always diagonally closed \cite{CHL08b}.

We collect the following information about laminations associated to
arational trees; for the statement, $L'(T)$ denotes the
Cantor-Bendixson derivative of $L(T)$, \emph{i.e.} $L'(T)=(L(T))'$ is
the set of non-isolated points of $L(T)$, and we set
$L''(T)=(L'(T))'$ and $L'''(T)=(L''(T))'$.

\begin{prop}\label{P.MinimalLam}
 Let $T \in \partial \cvn$.  
\begin{enumerate}
\item [(i)]
If $T$ is free and indecomposable, then $L'(T)$ is minimal, no leaf of
$L'(T)$ is carried by a factor, and $L(T)$ is obtained from
$L'(T)$ by adding finitely many $\FN$-orbits of isolated leaves, each
of which is diagonal and not periodic.
\item [(ii)] If $T$ is dual to an arational measured lamination on a
  surface with one boundary component, then $L'''(T)$ is minimal, no
  leaf of $L(T)$ is carried by a factor, and $L(T)$ is the
  smallest diagonally closed lamination containing
  $L'''(T)$. 
\end{enumerate}
\end{prop}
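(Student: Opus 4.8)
The plan is to treat the two cases separately, in each case building on the structural results already established: Proposition~\ref{P.Maximal} (maximality of laminations for indecomposable trees), Proposition~\ref{P.IndexTheorem} (the Coulbois--Hilion index theorem, which controls how many points of $\partial\FN$ map to a single point of $\hat T_w$), and the classification of arational trees. Throughout, the key object is the quotient map $Q=Q_T:\partial\FN\to\partial\FN/L(T)=\hat T_w$, whose fibers are exactly the leaves of $L(T)$ together with the diagonal leaves over $L(T)$; the Cantor--Bendixson structure of $L(T)$ should be readable off the local structure of the dendrite $\hat T_w$, with isolated leaves of $L(T)$ corresponding to pairs of endpoints (valence-one points) of $\hat T_w$ that bound a ``diagonal'' arc, and non-isolated leaves corresponding to limits.

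For part (i), $T$ free and indecomposable: I would first show $L'(T)$ is nonempty and sublaminations behave well, then argue that $L'(T)$ contains no leaf carried by a factor (this follows since $L(T)$ itself carries no such leaf by arationality, and $L'(T)\subseteq L(T)$). For minimality of $L'(T)$: take any nonempty closed $\FN$-invariant $i$-invariant $L_0\subseteq L'(T)$; I want to show $L_0=L'(T)$. The natural approach is to produce from $L_0$ a tree, or rather to use that $L_0$ is itself a lamination carried by no factor hence (via the dual tree / the structure theory in \cite{R12,CHL08b}) associated to an arational tree $U$ with $L(U)\subseteq \overline{L_0}\subseteq L(T)$... but the inclusion goes the wrong way, so instead I would invoke diagonal closure: $L(T)$ is the smallest diagonally closed lamination containing $L'(T)$ because the finitely many isolated orbits are diagonal over $L'(T)$. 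To see there are only finitely many isolated orbits and that they are diagonal and non-periodic: isolated leaves of $L(T)$ correspond to ``gaps'' in the Cantor set $\partial\FN/{\sim}$ picture, i.e.\ to complementary arcs of the branch structure of $\hat T_w$; finiteness up to the $\FN$-action comes from the index theorem (only countably many branch points, and a compactness/co-finiteness argument bounding the total index, exactly as in the geometric index computations of \cite{CH10}), non-periodicity because a periodic isolated leaf $(g^{-\infty},g^\infty)$ would have $g$ fixing the arc in $\hat T_w$ joining these endpoints, contradicting that $T$ is free, and diagonality because the endpoints of such an isolated leaf must both be endpoints of $\hat T_w$ and hence (by density of $T_w$ in each segment) connected through points of $L'(T)$. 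Then minimality of $L'(T)$: any proper sublamination $L_0$ would, together with the diagonal leaves, fail to be all of $L(T)$, but the smallest diagonally closed lamination containing a nonempty one of these is again associated to an arational tree $U$ with $L(U)$ related to $L(T)$ --- and here I would use Proposition~\ref{P.Maximal} in the form: the dual lamination of an arational tree is ``minimal modulo diagonals'', so $L'(T)$ admits no proper sub-sublamination.

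For part (ii), $T$ dual to an arational measured lamination $\Lambda$ on a surface $S$ with one boundary component: here the geometry is classical. The lamination $L(T)$ decomposes into the leaves of $\Lambda$ (lifted to $\partial^2\FN$), together with ``boundary leaves'' of complementary ideal polygons, together with the diagonals needed for diagonal closure. The three Cantor--Bendixson derivatives come from: first derivative removes isolated leaves coming from pairs of a boundary leaf and a spiraling leaf; second removes the next layer; and $L'''(T)$ is the ``support'' which is the minimal arational lamination $\widehat\Lambda$ --- minimality is the classical fact that an arational (minimal, filling) measured lamination on a surface has minimal underlying topological lamination. No leaf of $L(T)$ is carried by a factor because arationality of $\Lambda$ on a one-holed surface means no leaf is carried by a proper free factor (leaves carried by factors would come from simple closed curves or boundary-parallel curves, excluded by filling). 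That $L(T)$ is the smallest diagonally closed lamination containing $L'''(T)$ is then a direct surface computation: adding diagonals to $\widehat\Lambda$ produces exactly the ideal-polygon completions, which is $L(T)$.

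The main obstacle I expect is the finiteness (up to $\FN$) of the isolated leaves in part (i) together with their diagonality, in the genuinely non-geometric ``pseudo-Levitt'' case: unlike the surface case there is no concrete polygon picture, so I would have to extract this purely from Proposition~\ref{P.IndexTheorem} by the argument that the isolated points of $L(T)$ inject (via $Q\times Q$ on the pair of endpoints) into the set of ordered pairs of endpoints of $\hat T_w$ lying in a common direction-complementary component, bound the number of $\FN$-orbits by a Gauss--Bonnet/index count (each contributing a definite amount to the finite total $Q$-index, which is $\le 2N-2$), and then separately rule out periodicity using freeness and establish diagonality by showing each isolated leaf's endpoints are separated in $\hat T_w$ only by points with preimages inside $L'(T)$. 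The surface case (ii), by contrast, should be essentially bookkeeping with known facts about measured laminations.
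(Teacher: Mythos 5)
Your treatment of part (ii) is essentially the paper's: the paper also works with the hyperbolic structure on the one-holed surface, identifies the ends of the crown region with $\{-\infty\}\cup\Z\cup\{+\infty\}$, reads off the three Cantor--Bendixson derivatives explicitly (first deleting orbits of leaves joining $m$ to $n$ with $|m-n|>1$, then those joining $n$ to $\pm\infty$, then the orbit of the boundary leaf), and rules out leaves carried by a factor $A$ by passing to the convex core of the $A$-cover. Your version is looser on which leaves disappear at each derivative, but the route is the same.

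Part (i) is where the proposal has a genuine gap. The paper does not prove (i); it cites it as the main results of \cite{R10a} and \cite{CHR11}, which are substantial theorems. Your attempted substitute argument for minimality of $L'(T)$ is circular: you reduce it to the assertion that ``the dual lamination of an arational tree is minimal modulo diagonals,'' which is exactly the statement to be proved, and you propose to extract it from Proposition \ref{P.Maximal}. But Proposition \ref{P.Maximal} is a \emph{maximality} statement (if $L(T)\subseteq L(U)$ then equality holds); it says nothing about proper closed invariant subsets of $L'(T)$, and as you yourself note the inclusion runs the wrong way. There is no mechanism in the paper for producing, from a proper sublamination $L_0\subsetneq L'(T)$, a very small tree $U$ with $L(U)$ comparable to $L_0$, so the contradiction never materializes. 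Similarly, the finiteness (up to the $\FN$-action) of the isolated leaves and their diagonality cannot be extracted from Proposition \ref{P.IndexTheorem} as stated: that proposition only asserts that \emph{countably} many points of $\hat T_w$ have fiber of cardinality $>2$, whereas the Gauss--Bonnet-type bound ($Q$-index at most $2N-2$) that your count would require is a strictly stronger result of \cite{CH10} not recorded in this paper. In short, for (i) you should simply invoke \cite{R10a} and \cite{CHR11} rather than attempt to rederive their main theorems from the dendrite picture.
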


\begin{proof}
 Statement (i) follows from the main results of \cite{R10a} and
 \cite{CHR11}.  Statement (ii) follows from straightforward
 considerations about surface laminations and foliations, and the
 reader is assumed to have some familiarity with this; see \cite{CB88}
 and \cite{CHL08b} for background and the next paragraph for details.

 Let $S$ be a hyperbolic surface with one totally geodesic boundary
 component, equipped with a filling, minimal geodesic measured
 lamination $\Lambda$ to which $T$ is dual.  The universal cover
 $\tilde S$ can be identified with a closed convex subset of $\mathbb
 H^2$ whose boundary consists of lines covering $\partial S$ and the
 set of ends of $\tilde S$ is $\partial F_N$. By construction, the
 lift $\tilde \Lambda$ of $\Lambda$ gives a subset of the dual
 lamination $L(T)$. Now $\tilde\Lambda$ is not diagonally closed, and
 we will enlarge it by adding diagonal leaves. A complementary
 component of $\tilde\Lambda$ containing a boundary component of
 $\tilde S$ is the universal cover of the {\it crown set} (see
 \cite{CB88}), i.e. it is obtained from a hyperbolic half-plane
 invariant under a hyperbolic isometry $g$ (the deck transformation
 preserving the boundary) by deleting a pairwise disjoint
 collection of hyperbolic half-planes invariant under $g$, with
 adjacent half-spaces cobounding cusps. Thus the
 set of ends of this region can be identified with
 $\{-\infty\}\cup\Z\cup\{+\infty\}$ where $\pm\infty$ correspond to
 the ends of the boundary component and elements of $\Z$ to the cusps.

Since $L(T)$ is closed and diagonally closed, we see that it contains
$F_N$-orbits of leaves connecting any pair of distinct points in
$\{-\infty\}\cup\Z\cup\{+\infty\}$. To see that $L(T)$ contains no
additional leaves, observe that any biinfinite geodesic that does not
belong to the collection just described must intersect $\tilde\Lambda$
transversally, so it gets nonzero measure and does not belong to
$L(T)$. Finally, we have that $L'(T)$ is obtained from $L(T)$ by
deleting isolated leaves, and these are in the orbit of leaves
connecting $m$ and $n$ for $|m-n|>1$, then $L''(T)$ is obtained from
$L'(T)$ by further deleting orbits of leaves connecting $n$ to
$\pm\infty$, and lastly, $L'''(T)$ is obtained by deleting the orbit
of the boundary.

To see that no leaf is carried by a factor $A$ consider the lift of
$\Lambda$ and the added leaves to the $A$-cover of $S$ and restrict to
the convex core; here, the lifts of leaves of $\Lambda$ must be compact, \emph{c.f.} \cite{R10a}.
\end{proof}

Hence, we get the following:

\begin{cor}\label{no fold}
  Let $T,U \in \partial \cvn$.  If $T$ is arational and if $L'''(T)
  \subseteq L(U)$, then $L(T)=L(U)$.
 \end{cor}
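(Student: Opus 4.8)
The plan is to reduce to the two cases of the arational classification (Proposition on equivalence of (i) and (ii)), namely $T$ free and indecomposable versus $T$ dual to an arational measured lamination on a once-punctured surface, and in each case promote the hypothesis $L'''(T) \subseteq L(U)$ to the stronger hypothesis $L(T) \subseteq L(U)$, after which Proposition \ref{P.Maximal} applies directly, since an arational tree is indecomposable.

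First I would dispose of the free indecomposable case. Here Proposition \ref{P.MinimalLam}(i) tells us that $L'(T)$ is already minimal and that $L(T)$ is obtained from $L'(T)$ by adding finitely many $\FN$-orbits of isolated leaves. In particular $L'''(T) = L''(T) = L'(T)$ (a minimal lamination with more than one leaf has no isolated leaves, so taking the Cantor--Bendixson derivative is idempotent once we reach $L'(T)$). Thus the hypothesis reads $L'(T) \subseteq L(U)$. Since $L(U)$ is closed and diagonally closed (laminations dual to trees always are, by \cite{CHL08b}), and since $L(T)$ is the diagonal closure of $L'(T)$ together with the closure operation — more precisely every leaf of $L(T) \ssm L'(T)$ is diagonal over $L'(T)$ by Proposition \ref{P.MinimalLam}(i) — we get $L(T) \subseteq L(U)$. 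Now Proposition \ref{P.Maximal} gives $L(T) = L(U)$.

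Next the surface case. Proposition \ref{P.MinimalLam}(ii) says $L'''(T)$ is minimal and that $L(T)$ is the smallest diagonally closed lamination containing $L'''(T)$. So from $L'''(T) \subseteq L(U)$ and the fact that $L(U)$ is a diagonally closed lamination (again \cite{CHL08b}), minimality of $L(T)$ as a diagonally closed lamination containing $L'''(T)$ forces $L(T) \subseteq L(U)$. One subtlety: "diagonally closed lamination" also includes being closed and $\FN$- and $i$-invariant, all of which $L(U)$ satisfies, so it is a legitimate competitor and the minimality statement applies. Then as before, $T$ arational is indecomposable, so Proposition \ref{P.Maximal} yields $L(T) = L(U)$.

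The main obstacle, and the only real content beyond bookkeeping, is confirming in each case that the hypothesis $L'''(T) \subseteq L(U)$ genuinely upgrades to $L(T) \subseteq L(U)$ — that is, that no leaf of $L(T)$ escapes the diagonal/closed hull of $L'''(T)$ inside an arbitrary dual lamination $L(U)$. This is exactly what the structural statements in Proposition \ref{P.MinimalLam} are designed to supply (the explicit description of $L(T) \ssm L'''(T)$ as orbits of diagonal leaves — the crown leaves and the boundary leaf in the surface case, the finitely many diagonal isolated leaves in the free case), combined with the general fact that $L(U)$ is closed and diagonally closed. Once that upgrade is in hand, the conclusion is immediate from Proposition \ref{P.Maximal}, so I expect the write-up to be short.
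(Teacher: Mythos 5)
Your proof is correct and follows the same route as the paper: use Proposition \ref{P.MinimalLam} (together with the fact that $L(U)$ is closed and diagonally closed) to upgrade $L'''(T)\subseteq L(U)$ to $L(T)\subseteq L(U)$, then apply Proposition \ref{P.Maximal}. The paper's proof is just a terser version of yours; your case analysis fills in exactly the details it leaves implicit.
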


 \begin{proof}
 Using Proposition \ref{P.MinimalLam}, we get that $L'''(T) \subseteq
 L(U)$ implies that $L(T) \subseteq L(U)$.  Apply Proposition
 \ref{P.Maximal} to conclude.
 \end{proof}

For any $T\in \barCVN$ we define
$$T^*=\{\mu\in\mathbb PM_N\mid \<T,\mu\>=0\}$$
Thus if $T\in \CVN$ then $T^*=\emptyset$. An elementary limiting
argument shows that if $T\in\partial \CVN$ then $T^*\neq\emptyset$,
but for efficiency of the exposition this is postponed to Remark
\ref{nonempty} below.

\begin{thm}\label{T.UniqueLam}
 Let $T \in \partial \cvn$.  If $T$ is arational, and if $\mu \in M_N$
 satisfies $\langle T,\mu \rangle = 0$, then $Supp(\mu)=L'''(T)$.  In
 particular, if $U$ is another very small tree satisfying $\langle
 U,\mu \rangle =0$, then $U$ is arational, $L(T)=L(U)$ and $T^*=U^*$.
\end{thm}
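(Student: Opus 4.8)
The plan is to prove that $\mathrm{Supp}(\mu) = L'''(T)$ for any current $\mu$ of zero length paired with an arational tree $T$, and then deduce the "in particular" consequences. The starting point is Proposition \ref{P.ZeroInter}: $\langle T,\mu\rangle = 0$ is equivalent to $\mathrm{Supp}(\mu)\subseteq L(T)$. So I need to understand which sub-laminations of $L(T)$ can arise as supports of currents in $M_N$. The key structural input is Proposition \ref{P.MinimalLam}, which describes $L(T)$ very precisely in both the free-indecomposable case and the surface case: in both cases there is a canonical minimal sub-lamination ($L'(T)$ in the first case, $L'''(T)$ in the second — note $L'(T)=L'''(T)$ when $T$ is free and indecomposable since there are no further Cantor--Bendixson derivatives to take), and $L(T)$ is obtained from it by adding finitely many $\FN$-orbits of isolated, non-periodic, diagonal leaves.

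**First step:** show $\mathrm{Supp}(\mu)\supseteq L'''(T)$, equivalently that $\mathrm{Supp}(\mu)$ is nonempty and hence (being a closed $\FN$-invariant subset of the minimal lamination $L'''(T)$, once we know it is contained in it) must be all of it. Nonemptiness of $\mathrm{Supp}(\mu)$ is automatic since $\mu\in M_N$ is a nonzero current. The real content is: $\mathrm{Supp}(\mu)$, sitting inside $L(T)$, cannot contain any of the finitely many isolated diagonal leaves without also containing $L'''(T)$ — but actually the cleaner route is to observe that an isolated leaf of $L(T)$ is an isolated point of $L(T)$, and by Proposition \ref{P.ZeroInter}'s discussion an isolated point of a support corresponds to an atom of $\mu$; an atomic current concentrated on a diagonal non-periodic leaf is not $\FN$-invariant as a Radon measure (a periodic leaf would be needed to carry an atom, since atoms must be carried on $\FN$-orbits that are "closed up" by an element — this is where non-periodicity of the added leaves is used). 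Hence $\mathrm{Supp}(\mu)$ contains no isolated leaf of $L(T)$, so $\mathrm{Supp}(\mu)\subseteq L'(T)$ (or $L'''(T)$); and since the relevant derived lamination is minimal and $\mathrm{Supp}(\mu)$ is a nonempty closed $\FN$-invariant subset of it, we get $\mathrm{Supp}(\mu) = L'''(T)$. I should be careful: in the surface case I must rule out $\mathrm{Supp}(\mu)$ being trapped in $L'(T)\setminus L'''(T)$ or $L''(T)\setminus L'''(T)$; but the leaves of $L'(T)$ not in $L'''(T)$ are again isolated \emph{within} $L'(T)$ (the orbits of leaves joining $n$ to $\pm\infty$, then the boundary orbit), so the same "no atoms on non-periodic leaves" argument iterates, peeling off one derivative at a time until we reach the minimal core $L'''(T)$. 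The boundary leaves themselves are periodic, but a current supported only on the boundary orbit would lie outside $M_N$ (the boundary class is not primitive; more precisely it is a proper power in the surface-with-boundary picture, hence not in $\mathbb{P}M_N$) — this point needs a sentence.

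**The "in particular" part:** suppose $U\in\barcvn$ is another very small tree with $\langle U,\mu\rangle = 0$. By Proposition \ref{P.ZeroInter} again, $\mathrm{Supp}(\mu)\subseteq L(U)$, i.e. $L'''(T)\subseteq L(U)$. Now invoke Corollary \ref{no fold}: since $T$ is arational and $L'''(T)\subseteq L(U)$, we conclude $L(T) = L(U)$. In particular $U$ has the same lamination as an arational tree, so no leaf of $L(U)$ is carried by a factor, which by the definition of arationality (and Proposition \ref{P.MinimalLam} / the classification) forces $U$ to be arational. Finally, $T^* = U^*$: by the first part, any $\nu\in M_N$ has $\langle T,\nu\rangle = 0 \iff \mathrm{Supp}(\nu)\subseteq L(T) \iff \mathrm{Supp}(\nu)\subseteq L(U) \iff \langle U,\nu\rangle = 0$, using $L(T) = L(U)$ and Proposition \ref{P.ZeroInter} in both directions; projectivizing gives $T^* = U^*$.

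**The main obstacle** I expect is the atom argument in the first step: making precise why a current in $M_N$ cannot have an atom supported on one of the isolated diagonal leaves added to pass from $L'''(T)$ to $L(T)$. The intuition — atoms of an $\FN$-invariant Radon measure on $\partial^2\FN$ force the supporting leaf-orbit to be "finite mod an element," i.e. the leaf is an axis $(g^{-\infty},g^\infty)$ of some $g$, hence periodic — is standard (it is essentially why counting currents exhaust the atomic ones), but I want to cite it cleanly rather than reprove it; the non-periodicity clause in Proposition \ref{P.MinimalLam}(i) and the analogous structure in (ii) is exactly tailored to this, so the argument should go through, but the surface case requires honest bookkeeping of the three layers of Cantor--Bendixson derivative and the separate handling of the (periodic, but non-primitive) boundary orbit. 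A secondary subtlety is ensuring $\mathrm{Supp}(\mu)\neq\emptyset$ and that it meets the minimal core and not just, say, sits entirely off it — but minimality of $L'''(T)$ plus the peeling argument closes this gap.
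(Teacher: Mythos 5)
Your overall architecture matches the paper's: reduce to $\mathrm{Supp}(\mu)\subseteq L(T)$ via Proposition \ref{P.ZeroInter}, peel off the non-minimal leaves using the fact that an isolated point of the support is an atom and that an $\FN$-invariant Radon measure cannot have an atom on a non-periodic leaf (translates accumulate), then use minimality of $L'''(T)$ to upgrade the inclusion to equality; the ``in particular'' clause via Corollary \ref{no fold} and two applications of Proposition \ref{P.ZeroInter} is exactly the paper's deduction and is fine.

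The genuine gap is the one you flagged yourself: excluding the \emph{periodic} boundary leaf in the surface case. Your proposed fix fails on two counts. First, the factual claim is wrong: the boundary class of a compact surface with one boundary component and free fundamental group of rank $N\geq 3$ is a product of commutators (or of squares in the non-orientable case), not a proper power, so ``not in $\mathbb{P}M_N$ because it is a proper power'' has no content here. Second, even granting that the counting current of the boundary class lies outside $M_N$, you only address ``a current supported \emph{only} on the boundary orbit,'' whereas the situation to rule out is a current $\mu\in M_N$ whose support contains the boundary orbit \emph{in addition to} $L'''(T)$; nothing you cite shows that $M_N$ is closed under passing to the atomic part on a single orbit, so no contradiction is reached. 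The paper closes this with a different and essential idea: if $\mu\in M_N$ is a limit of primitive counting currents $\eta_{g_n}$ and the periodic leaf $(g^{-\infty},g^{\infty})$ lies in $\mathrm{Supp}(\mu)$, then for large $n$ the cyclic word $g_n$ contains $g^2$ as a subword, so the Whitehead graph of $g$ (in any basis) embeds, minus one edge, in that of $g_n$, which is disconnected or has a cut point; hence $g$ is carried by a proper free factor. Since the boundary class of the surface fills $\FN$ (Proposition \ref{P.MinimalLam}(ii)), its leaf cannot lie in $\mathrm{Supp}(\mu)$. Without this Whitehead-graph step (or an equivalent substitute), your argument does not reach $\mathrm{Supp}(\mu)\subseteq L'''(T)$ in the surface case.
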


\begin{proof}
 If $\mu \in M_N$ satisfies $\langle T,\mu \rangle=0$, then
 Proposition \ref{P.ZeroInter} gives that $Supp(\mu) \subseteq
 L(T)$. The support of a current cannot contain non-periodic isolated
 leaves, since translates of such leaves have accumulation points (recall that currents are Radon measures).

 We now show that the support of a current in $M_N$ cannot contain a
 periodic leaf corresponding to a conjugacy class not carried by a
 factor. Here we
 will use the fact that a conjugacy class $g$ is not simple if and
 only if there is a basis with respect to which the Whitehead graph of
 $g$ is connected and has no cut points.

Choose a basis for $\FN$, and let $\mu_n$ be a sequence
 of currents corresponding to primitive conjugacy classes $g_n$ with
 $\mu_n$ converging to $\mu$.  Each $g_n$ has Whitehead graph that
 either is disconnected or has a cut point.  After passing to a
 subsequence, $g_n$ all have the same Whitehead graph $W$.  If $l$ is
 a periodic leaf of $Supp(\mu)$ corresponding to $g \in \FN$, then for
 $n >>0$, $g_n$ contains $g^2$ as a subword; it follows that the
 Whitehead graph for $g$ is contained in $W$, so $g$ is carried by a
 factor (since the statement about Whithead graphs is true for
 every basis). We conclude by Proposition \ref{P.MinimalLam} that
 $Supp(\mu)\subseteq L'''(T)$, and hence $Supp(\mu)=L'''(T)$. 

 Now let $U \in \partial \cvn$ be some other tree such that $\langle
 U,\mu \rangle =0$.  Again by Proposition \ref{P.ZeroInter}, we have
 that $L'''(T) \subseteq L(U)$.  Apply Corollary \ref{no fold} to
 conclude.
\end{proof}

\begin{cor}\label{stick}
Let $S, T \in \partial cv_N$.  If $S^* \subseteq T^*$ and
$\mathcal{R}(S)\neq\emptyset$, then $\mathcal{R}(S) \cap
\mathcal{R}(T) \neq \emptyset$.
\end{cor}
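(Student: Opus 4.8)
The statement relates reduction behavior ($\mathcal{R}(\cdot)\ne\emptyset$, i.e. non-arationality) to the containment of degenerate-measure sets $S^*\subseteq T^*$. The natural approach is to produce a factor that reduces both $S$ and $T$ by first finding a witnessing current in $S^*$, tracing the subtree of $S$ on which some factor acts with dense orbits, and then transporting that data to $T$ via the inclusion $S^*\subseteq T^*$. The key intermediate object is a current whose support lies in $L(S)$ and is carried by a proper free factor.

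\textbf{Step 1: produce a relevant current in $S^*$.} Since $\mathcal{R}(S)\ne\emptyset$, pick a factor $F$ with $F$ acting with dense orbits on a subtree $Y\subseteq S$. The restricted action $F\curvearrowright Y$ (or, if $Y$ is a point, the fact that $F$ fixes a point of $S$) gives rise to a current supported on $L(S)\cap\partial^2 F$: concretely, take a sequence of primitive conjugacy classes of $\FN$ lying in $F$ with $l_S\to 0$ (these exist because $F$ acts non-discretely on $Y$ or fixes a point), extract a weak\textsuperscript{*} limit current $\mu$. By construction $\mu\in M_N$ (it is a limit of currents of primitive classes), $\langle S,\mu\rangle=0$ by continuity of the length pairing (Proposition \ref{P.Inter}), so $\mu\in S^*$, and moreover $Supp(\mu)\subseteq\partial^2 F$ since each approximating class sits in $F$ and $\partial^2 F$ is closed.

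\textbf{Step 2: transport to $T$ and extract a reducing factor.} From $S^*\subseteq T^*$ we get $\langle T,\mu\rangle=0$, so by Proposition \ref{P.ZeroInter} we have $Supp(\mu)\subseteq L(T)$. Thus $L(T)$ contains a leaf carried by the proper free factor $F$. By the discussion following the definition of $\mathcal{R}(\cdot)$ (``if $F'$ is a factor carrying a leaf of $L(T)$, then there is $F\in\mathcal{R}(T)$ with $F\le F'$''), there is a factor $F_0\le F$ with $F_0\in\mathcal{R}(T)$; in particular $\mathcal{R}(T)\ne\emptyset$. It remains to see that we may also arrange $F_0\in\mathcal{R}(S)$. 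Since $Supp(\mu)\subseteq L(S)$ as well, the \emph{same} leaf is carried by $F$ in $L(S)$, so the same cited fact produces a factor $F_1\le F$ with $F_1\in\mathcal{R}(S)$. The remaining issue is to get a \emph{single} factor in both sets rather than two different sub-factors of $F$ --- this is where one must work a little.

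\textbf{The main obstacle, and how to handle it.} The subtlety is that $\mathcal{R}(S)$ and $\mathcal{R}(T)$ both contain sub-factors of $F$ but perhaps not a common one. The cleanest fix is to choose $\mu$ more carefully in Step 1 so that $Supp(\mu)$ is \emph{minimal} among laminations carried by proper factors --- e.g. take $F$ to be a \emph{minimal} factor reducing $S$ (exists since factors carrying leaves of $L(S)$ satisfy a descending chain condition up to conjugacy, $\mathcal{R}(S)$ being $1$-dense in them and $\mathcal F$ being the factor complex) and arrange $\mu$ supported on $L(S_F)$ with $S_F$ dense in $Y$. Then any factor carrying a leaf of $Supp(\mu)$ must, after intersecting with $F$, still carry that leaf; minimality of $F$ in $\mathcal R(S)$ forces the reducing sub-factor of $F$ inside $L(T)$ to be $F$ itself (or conjugate into it in a way that makes $F\in\mathcal R(T)$). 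One then concludes $F\in\mathcal{R}(S)\cap\mathcal{R}(T)$. Verifying that the limit current $\mu$ can indeed be taken with $Supp(\mu)$ this well-controlled --- essentially that $L(S_F)$ carries a current in $M_N$ supported exactly there --- is the technical heart, and parallels the argument in Theorem \ref{T.UniqueLam} that isolated non-periodic leaves and non-simple periodic leaves cannot lie in the support of an $M_N$-current.
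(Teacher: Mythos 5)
Your Steps 1--2 follow the same route as the paper (pick a minimal-rank $F\in\mathcal{R}(S)$ so that $F\curvearrowright S_F$ is arational, take a current $\mu$ supported on $L'''(S_F)$, and push it into $T$ via $S^*\subseteq T^*$ and Proposition \ref{P.ZeroInter}), and you correctly identify where the difficulty lies. But your resolution of that difficulty does not work. The $1$-density remark only produces some factor $F_0\leq F$ with $F_0\in\mathcal{R}(T)$, and ``minimality of $F$ in $\mathcal{R}(S)$'' places no constraint on $F_0$: minimality concerns sub-factors of $F$ that reduce $S$, whereas $F_0$ is a sub-factor that reduces $T$, and in the non-elliptic case no proper sub-factor of $F$ lies in $\mathcal{R}(S)$ at all (that is exactly what arationality of $S_F$ says). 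So you end up with $F\in\mathcal{R}(S)$ and $F_0\in\mathcal{R}(T)$ with $F_0\lneq F$ possible, and no common element. Note also that ``$F$ carries a leaf of $L(T)$'' only tells you that some element of $F$ is elliptic in $T$ or that $F\curvearrowright T_F$ is non-discrete; neither is the dense-orbits condition defining $\mathcal{R}(T)$.

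The missing idea is to apply the duality Theorem \ref{T.UniqueLam} (equivalently Corollary \ref{no fold}) \emph{relative to $F$}, i.e.\ with $F$ playing the role of the ambient free group: $S_F$ is an arational $F$-tree, $\mu$ restricts to a current on $F$ with $\langle T_F,\mu\rangle=0$, hence either $F$ fixes a point of $T$ or $T_F$ is an arational (in particular dense-orbit) $F$-tree. Either way $F$ itself acts with dense orbits on a subtree of $T$, so $F\in\mathcal{R}(S)\cap\mathcal{R}(T)$. Separately, the paper dispatches the case where some factor $F$ is elliptic in $S$ by a cleaner device: all $\eta_g$ for $g\in F$ of bounded word length lie in $S^*\subseteq T^*$, so Serre's Lemma makes $F$ elliptic in $T$. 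Your limit-current construction does go through in that case too (since every sub-factor of an elliptic factor again lies in $\mathcal{R}(S)$, a common sub-factor suffices there), but in the main, non-elliptic case your argument as written has a genuine gap.
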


\begin{proof}
First suppose that there is a factor $F$ fixing a point in $S$.  Choose a
basis for $F$, and note that $\{\eta_g|g$ has length $\leq 2\} \subseteq
S^* \subseteq T^*$, so by Serre's Lemma $F$ fixes a point in $T$ as well.

If no factor fixes a point in $S$, then one finds $F \in \mathcal{R}(S)$
such that $F \curvearrowright S_F$ is arational (e.g. choose $F$ of
minimal rank).  Let $\mu$ be a current
that is supported on $L'''(S_F)$; we have that $Supp(\mu)$ fills $F$ and
that $\mu \in S^* \subseteq T^*$.  Hence, by Proposition \ref{P.ZeroInter}
$L'''(S_F) \subseteq L(T)$, so by Theorem \ref{T.UniqueLam}
either $F$ acts arationally on $T_F$ or else $F$
fixes a point in $T$; in either case, $F \in \mathcal{R}(T)$.
\end{proof}

We obtain the following result, which suggests that arational trees
lie ``at infinity'' with respect to $\mathcal{F}$.

\begin{cor}\label{C.ArationalAtInfinity}
Let $T_n \in \CVN$ be a sequence of trees converging to an arational
tree $T$, and let $Y_n=\pi(T_n)$ denote a projection to $\mathcal{F}$.
For any basepoint $0 \in \mathcal{F}$, we have $d(0,Y_n) \to \infty$.
\end{cor}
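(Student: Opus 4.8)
The plan is to argue by contradiction: suppose that along a subsequence $d(0, Y_n)$ stays bounded, say $d(0,Y_n) \leq M$ for all $n$ (after passing to a subsequence). Since the $T_n$ converge to $T$ in $\barCVN$, I want to extract from this bounded behaviour a factor that reduces $T$, contradicting arationality of $T$ (i.e. contradicting $\mathcal{R}(T) = \emptyset$, which holds by the definition of arational combined with the discussion at the start of Section 4). First I would recall that $\pi$ is coarsely Lipschitz-like in the relevant sense (from \cite[Section 3]{BF11}): if a factor $F$ is represented by a subgraph of $T_n/\FN$ of uniformly bounded volume, then $d_{\mathcal F}(\pi(T_n), F)$ is uniformly bounded. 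So boundedness of $d(0, Y_n)$ should be promoted to the existence of factors $F_n$ with $d(0, F_n) \leq M'$ for a uniform $M'$, where each $F_n$ is carried ``cheaply'' by $T_n$.

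The key step is then a limiting/compactness argument. Because $\mathcal F$ is a hyperbolic simplicial complex and the $F_n$ all lie in the ball of radius $M'$ about a fixed vertex $0$, which contains only finitely many vertices, after passing to a further subsequence we may assume $F_n = F$ is a \emph{single fixed} factor for all $n$. Now I would use the geometry of how $F$ sits inside $T_n$: since $F$ is (coarsely) the projection of $T_n$, there is an immersion of a graph representing $F$ into $T_n/\FN$ of uniformly bounded volume — equivalently, the translation lengths $\ell_{T_n}(g)$ for $g$ ranging over a fixed finite generating set of $F$ (of bounded word length, say $\leq 2$, using the witness-loop discussion at the end of Section 2.5) are uniformly bounded relative to the covolume normalization. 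Passing to the limit using the continuity of the length pairing (Proposition \ref{P.Inter}) and the fact that $T_n \to T$, I get that $F$ has a current $\mu$ supported on $F$ with $\langle T, \mu\rangle = 0$, or more directly that $F$ fixes a point of $T$ or acts non-discretely on $T_F$ — in either case $F$ carries a leaf of $L(T)$, hence (by the remark at the start of Section 4) some factor $F' \leq F$ lies in $\mathcal R(T)$, so $\mathcal R(T) \neq \emptyset$, contradicting arationality of $T$.

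The main obstacle I expect is making the passage from ``$d(0, Y_n)$ bounded'' to ``a fixed factor $F$ is carried with vanishing length by $T$'' fully rigorous: one has to be careful that the bounded-volume immersions representing $F_n$ in $T_n/\FN$ survive the limit $T_n \to T$, since $T$ is degenerate (non-free or non-simplicial) and $T/\FN$ need not be a finite graph. The clean way around this is to phrase everything in terms of translation length functions rather than graphs: fix once and for all, after the subsequence collapsing $F_n$ to a constant $F$, a finite set $g_1,\dots,g_k \in F$ generating $F$; boundedness of the volume of the immersion bounds $\sum_i \ell_{T_n}(g_i)$ up to the covolume normalization; rescaling and using compactness of $\barCVN$ and continuity of length functions (Proposition \ref{P.Inter}) then forces, in the limit, that the restriction of (a rescaling of) $T$ to $F$ has arbitrarily short elements, which is exactly the statement that $F$ carries a leaf of $L(T)$. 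Everything else — the hyperbolicity of $\mathcal F$ giving finiteness of balls, the coarse control on $\pi$ from \cite{BF11}, and the conclusion $\mathcal R(T) \ne \emptyset$ — is then routine.
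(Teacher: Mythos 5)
There is a genuine gap, and it sits exactly at the step you call ``key.'' You claim that the ball of radius $M'$ about the basepoint $0$ in $\mathcal F$ ``contains only finitely many vertices,'' and use this to pass to a subsequence on which $F_n$ is a single fixed factor $F$. But the free factor complex, like the curve complex, is locally infinite: already the link of a single vertex is infinite (for instance $\<a\>$ is contained in the infinitely many distinct conjugacy classes of rank-two factors $\<a,bc^n\>$, $n\in\Z$), so bounded balls contain infinitely many vertices and no such stabilization is available. Hyperbolicity of $\mathcal F$ does not help here. Once the fixed factor $F$ is gone, the rest of your argument (bounding $\ell_{T_n}(g_i)$ for a fixed generating set of $F$ and passing to the limit) has nothing to apply to, since the factors $F_n$, and hence the elements whose lengths you control, change with $n$. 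The obstacle you flagged at the end (whether bounded-volume immersions survive the degeneration $T_n\to T$) is the benign one; the fatal one is the false finiteness claim.

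The paper's proof is built precisely to circumvent this. Writing $d(0,Y_n)=r$ for all $n$, one takes the whole chain $0=A_n^0,\dots,A_n^r=Y_n$ realizing the distance, chooses simplicial trees $T_n^i$ in which $A_n^i$ is elliptic and elements $g_n^i$ conjugate into both $A_n^{i-1}$ and $A_n^i$, and then compactifies not in $\mathcal F$ but in $\barCVN$ and $\mathbb P M_N$: after rescaling and passing to subsequences, $T_n^i\to T^i$ and $g_n^i/|g_n^i|\to\eta^i$. Continuity of the Kapovich--Lustig pairing gives $\<T^{i-1},\eta^i\>=0=\<T^i,\eta^i\>$, and Theorem \ref{T.UniqueLam} propagates ``not arational'' inductively from the fixed simplicial tree $T^0$ down the chain, so that $\eta^r$ has positive length in every arational tree; meanwhile $\<T,\eta^r\>=0$ because the witness loops $g_n$ have bounded length in $T_n$ and $|g_n|\to\infty$. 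If you want to repair your write-up, you must replace the finiteness-of-balls step with a limiting argument of this kind along the entire chain of factors; controlling only the last factor $Y_n$ cannot work.
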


We follow Feng Luo's argument, an adaptation of \cite{kobayashi}.

\begin{proof}
We may assume that $Y_n$ is a factor generated by a uniformly bounded
loop in $T_n/F_N$ representing a conjugacy class $g_n$. This
guarantees that $Y_n$ is at uniformly bounded distance from
$\pi(T_n)$. Choose a basepoint $0 \in \mathcal{F}$. Toward
contradiction, suppose that $d(0,Y_n)$ does not go to infinity; by
passing to a subsequence, we can assume that $d(0,Y_n)=r$ for every
$n$.  Then there are paths $0=A_n^0, A_n^1, \ldots, A_n^{r-1},
A_n^{r}=Y_n \in \mathcal{F}$. 

Choose simplicial trees $T_n^i$ in which
$A_n^i$ is elliptic and let $g_n^i$ be conjugate into both $A_n^i$ and
$A_n^{i-1}$ for $i=1,\cdots,r$; also arrange that $g_n^r=g_n$ and that
$T_n^0$ does not depend on $n$. After possibly passing to a further
subsequence and rescaling, we can assume that $\lim
T_n^i=T^i\in\overline{cv}_N$ and $\lim g_n^i=\eta^i\in\mathbb PM_N$
for every $i$. By the Kapovich-Lustig continuity we have
$\<T^i,\eta^i\>=0=\<T^{i-1},\eta^i\>$ so by induction on $i$ and using
Theorem \ref{T.UniqueLam} we see that none of $T^i$'s are arational and in
particular $\eta^r$ does not have length 0 in any arational tree.

To get a contradiction, we argue $\<T,\eta^r\>=0$. By construction,
$\<T_n,g_n\>\leq C$ so $\<T_n/\mu_n,g_n/|g_n|\>\leq C/\mu_n|g_n|$ where
$|g_n|$ is the length of $g_n$ in a fixed rose, and $\mu_n$ are
rescaling constants so that $T_n/\mu_n\to T$. Again by the continuity
of the length pairing and the fact that both $|g_n|$ and
$\mu_n$ are bounded below, it suffices to argue that
$|g_n|\to\infty$, i.e. that it is possible to choose $g_n$'s to be all
distinct after a subsequence. If not, then after a subsequence all
$T_n$ belong to the same simplex and the limit $T$ is a simplicial
tree with trivial edge stabilizers, so certainly not arational.
\end{proof}

\begin{remark}\label{nonempty}
A similar argument shows that if $T\in\partial\CVN$ then
$T^*\neq\emptyset$. If $T$ is in the closure of a simplex there are
simple elliptic elements, and otherwise construct $\mu\in T^*$ as the
limit of $g_n/|g_n|$ as above.
\end{remark}

\section{Primitive Elements and Vertex Groups}

By a \emph{vertex group} we mean a vertex stabilizer in a very small
simplicial $\FN$-tree.  We associate to $g \in \FN$, respectively $A
\leq \FN$, the smallest free factor containing it, which we denote by
$Fill(g)$, respectively $Fill(A)$; $g$ ($A$) is \emph{simple} if
$Fill(g)$ ($Fill(A)$) is a proper subgroup of $\FN$, hence we get a
map $Fill: \{non-trivial$ $simple$ $elements$ $(subgroups)\} \to \F$.

\begin{lemma}\label{L.Vertex}
There is a constant $C$ such that for every very small simplicial
$\FN$-tree $T$, the set of simple elements fixing a point of $T$ map
under $Fill$ to a set of diameter at most $C$ in $\mathcal{F}$.
\end{lemma}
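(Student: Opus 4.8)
The plan is to reduce the statement to a finite, orbit-controlled piece of combinatorial data. First I would fix a very small simplicial $\FN$-tree $T$, let $E$ be the (finite) set of $\FN$-orbits of edges in the quotient graph of groups $T/\FN$, and recall that, $T$ being very small, each edge stabilizer is trivial or maximal cyclic, and each vertex group $A_v=\Stab(v)$ is a free factor system relative to the splitting — in particular $A_v$, being finitely generated, has bounded rank (at most $N$). The key observation is that if $g$ is a simple element fixing a point $x\in T$, then $g$ lies in the vertex group $A_x$, so $Fill(g)\le Fill(A_x)$; hence it suffices to bound the diameter of $\{Fill(A_v)\mid v \text{ a vertex of }T\}$ together with $\{Fill(g)\mid g\in A_v \text{ simple}\}$ for each $v$. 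For a single vertex group $A$, the elements of $A$ that are simple in $\FN$ are exactly those $g$ with $Fill(g)$ a proper factor of $\FN$, and all such $Fill(g)$ sit ``below'' $Fill(A)$ in the poset of factors provided $A$ itself is proper; the distance from $Fill(g)$ to $Fill(A)$ in $\mathcal{F}$ is then at most $1$ (or a small universal constant, accounting for the coarseness of the simplex-to-vertex passage). So the real content is to bound the number of ``relevant'' vertex groups, or rather the diameter of the set they fill.

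The main step is therefore to bound $\diam\{Fill(A_v)\}$ independently of $T$. Here I would use the structure of a very small splitting: collapsing all but one orbit of edges yields a one-edge splitting of $\FN$, which is either an amalgam $A*_CB$ or an HNN extension $A*_C$ with $C$ trivial or cyclic; the two vertex groups $A,B$ of a free (trivial-edge) splitting are complementary free factors, so $Fill(A)$ and $Fill(B)$ are either improper or at bounded distance in $\mathcal{F}$ (a free splitting of $\FN$ gives a factor and its complement, which are adjacent up to the standard fudge). For a cyclic edge, one can fold or blow up to a trivial-edge splitting at the cost of bounded error, or invoke directly that any two free factors carried by vertex groups of a common very small splitting are simple relative to each other. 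Iterating over the finitely many edge orbits, and using that the quotient graph has at most $3N-3$ edges, the $Fill$-images of successive vertex groups are a uniformly bounded chain in $\mathcal{F}$, giving the bound.

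The hard part will be handling non-trivial (maximal cyclic) edge groups cleanly: a vertex group $A_v$ of a splitting with cyclic edge groups need not be a free factor of $\FN$, so $Fill(A_v)$ is a genuine enlargement, and I must argue this enlargement is still ``visible'' from the splitting — e.g.\ by passing to the simplicial tree, collapsing the cyclic edge to absorb it and tracking how $Fill$ changes, or by quoting the $\mathcal{F}$-projection estimates of \cite{BF11} (a factor carried by a subgraph of $T/\FN$ projects boundedly) applied after a small modification that makes the cyclic edge group elliptic in an enclosing factor. Once that technical point is pinned down, assembling the universal constant $C$ (depending only on $N$, via the bound $3N-3$ on edges and the rank bound $N$ on vertex groups) is routine.
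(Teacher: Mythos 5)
Your reduction to a one--edge splitting, and your handling of the free--splitting case, match the paper's first step. But the heart of the lemma is exactly the point you defer to the end, and the fixes you sketch there do not work. For a one--edge splitting $A\ast_wB$ with cyclic edge group, Swarup's theorem puts it in the form $A=\langle a_1,\dots,a_k,w\rangle$, $B=\langle b_1,\dots,b_l\rangle$ with $\{a_i\}\cup\{b_j\}$ a basis and $w\in B$. If $w$ is simple in $B$ then $A$ lies in a proper factor and your ``$Fill(g)\le Fill(A_v)$, distance at most $1$'' argument applies. But in the remaining case $w$ fills $B$, so $Fill(A)=\FN$ and $A$ is \emph{not} simple; your poset argument is vacuous there, while $A$ still contains plenty of simple elements (e.g.\ the $a_i$ and long words in the $a_i$'s and $w$) whose fillings must all be shown to lie near $\langle a_1,\dots,a_k\rangle$. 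Your proposed remedies fail in this case: collapsing the cyclic edge of a one--edge splitting yields the trivial tree; there is no proper factor containing $A$ in which to make $w$ ``elliptic in an enclosing factor''; and the subgraph--projection estimate of \cite{BF11} presupposes a subgraph carrying a factor, which is precisely what is in question.

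The paper's proof of this case is a genuine argument, not a routine assembly: for a simple cyclic word $g$ in $a_1,\dots,a_k,w$ involving all these letters, one runs Whitehead's algorithm in the basis $\{a_i\}\cup\{b_j\}$, using that the Whitehead graph of $w$ (with one edge removed) is connected and sits inside the Whitehead graph of $g$. One then checks that as long as the special letter is some $a_i^{\pm1}$ the Whitehead automorphism preserves $B$ up to conjugacy and leaves the occurrences of $w$ in $g$ intact, and that the first time the special letter is some $b_j^{\pm1}$ the Whitehead graph of $g$ in the basis $\{a_1,\dots,a_k,w\}$ of $A$ becomes disconnected, exhibiting $g$ inside a factor complementary to $B$. (The HNN/loop case needs a parallel analysis.) Without this, or some substitute for it, your proposal does not prove the lemma; everything before it is the easy part.
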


We assume that the reader is familiar with Whitehead's algorithm
\cite{LS77}. The argument is an adaptation of the proof of \cite[Lemma
  3.2]{BF11}.   

\begin{proof}
Let $T \in \partial \cvn$ be simplicial.  If $T$ has an edge $e$ with
trivial stabilizer, then collapsing every edge not in the orbit of $e$
gives a tree $T'$ corresponding to a free splitting of $\FN$, and
every simple elliptic element of $T$ is elliptic in $T'$.  The image
under $Fill$ of the simple elliptic elements of $T'$ has diameter at
most $2$ in $\mathcal{F}$.

So, assume that $T$ has no edge with trivial stabilizer; collapse
edges outside of a fixed orbit of edges and replace $T$ with the
resulting $1$-edge splitting.  This increases the diameter of the
image of $Fill$ by at most $2$.  We have two cases to consider,
corresponding to whether $T/\FN$ is a segment or a loop.

First suppose that $T/\FN$ is a segment, so $T$ corresponds to a
splitting $A\ast_w B$.  By Swarup's theorem \cite{swarup} (see also 
\cite[Lemma 4.1]{BF94}), we have, possibly after interchanging $A$
and $B$, that
$A=\langle a_1, \ldots,a_k,w \rangle$ and $B=\langle
b_1,\ldots,b_l\rangle$, where $\{a_1,\ldots,a_k,b_1,\ldots,b_l\}$ is a
basis for $\FN$ and $w\in B$.

If $w$ is contained in a factor $B'$ of $B$, then $A$ is
contained in the factor $\<a_1,\cdots,a_k\>*B'<F_N$ and the Lemma
follows. So assume that $w$ fills $B$, and after possibly changing the
basis of $B$, that the Whitehead graph of $w$ with respect to the
basis $\{b_i\}$ is connected and has no cut points. 

Now let $g$ be a simple conjugacy class in $A$, i.e. a cyclic word in
the $a_i$'s and $w$. If $w$ does not appear, the image of $Fill(g)$ is
at distance $\leq 1$ from $\<a_1,\cdots,a_k\>$ and we are done. 
Likewise, if some $a_i$
does not appear, then $Fill(g)$ is at distance $\leq 1$ from
$\<a_1,\cdots,\hat a_i,\cdots,a_k\>*B$, hence at distance $\leq 4$
from $\<a_1,\cdots,a_k\>$, and, again, we are done.

We now apply the
Whitehead algorithm that transforms $g$ to a cyclic word that does not
involve all the basis elements. At each step a Whitehead automorphism is
applied whose effect on $g$ is that it gets shorter. The Whitehead
automorphism can be read off from the Whitehead graph of $g$ in the basis
$\{a_i\}\cup\{b_j\}$, which
has a cut vertex
called the {\it special letter}.
Note that the Whitehead graph of $g$ contains as
a subgraph the Whitehead graph of $w$ with one edge removed (since $w$
doesn't get ``closed up'' in $g$) and this subgraph is connected by
our assumption on $w$.

If the special letter is $a_i^{\pm 1}$, then all $b_j^{\pm 1}$'s are
on one side of the cut vertex, and therefore $B$ is either fixed or
gets conjugated by $a_i^{\pm 1}$. The word $w$ inside $g$ stays
unaffected.

Now assume the special letter is $b_i^{\pm 1}$. Say $w=xw_1y$ as a
word in $\{b_j\}$. Thus $x\neq y^{-1}$, and in the Whitehead graph $W$ of
$g$ with respect to $\{a_i\}\cup\{b_j\}$, $b_i^{\pm}$ is a cut
vertex. The Whitehead graph of $g$ with respect to
$\{a_1,\cdots,a_k,w\}$ is obtained from $W$ by removing all vertices
$b_j^{\pm 1}$ except for $x$ and $y^{-1}$ and all edges incident to
them, and renaming $x$ to $w$ and $y^{-1}$ to $w^{-1}$. Therefore this
Whitehead graph is disconnected and $g$ is contained in a factor
$C$ complementary to $B$, so the Lemma again follows. (The factor
$C$ is
obtained from $\<a_1,\cdots,a_k\>$ by applying the Whitehead
automorphism with special letter $w$. Topologically, one can blow up
the rose on $a_1,\cdots,a_k,w$ by inserting an ideal edge which is not
crossed by $g$; then in the graph of spaces corresponding to $A*_wB$
collapse the 2-cell from this ideal edge to get a graph representing
$F_N$, containing the rose on $b_1,\cdots,b_l$ and a representative of
$g$ disjoint from this rose.)

To summarize, the Whitehead algorithm runs as long as some $a_i^{\pm
  1}$ is a special letter and during this time $B$ is fixed up to
conjugacy. When some $a_i$ is completely erased from $g$ or some
$b_j^{\pm 1}$ becomes the
special letter, we are done by the above discussion.

When $T/F_N$ is a loop the argument is similar: we can write the
vertex group as $\<a_1,\cdots,a_{N-1},w^c\>$ where
$F_N=\<a_1,\cdots,a_{N-1},c\>$ and $w\in
\<a_1,\cdots,a_{N-1}\>$. Again we may assume that the Whitehead graph
of $w$ in $\<a_1,\cdots,a_{N-1}\>$ is connected and has no cut
points. Let $g$ be simple and written as a cyclic word in
$a_1,\cdots,a_{N-1},w^c$. We may assume it involves all of these
generators. If $w=xw_1y$ then the Whitehead graph of $w$ in the basis
$\{a_1,\cdots,a_{N-1},c\}$ is obtained from the Whitehad graph of $w$
in $\{a_1,\cdots,a_{N-1}\}$ by removing an edge joining $y^{-1}$ and
$x$, adding edges from $c^{-1}$ to $x$ and from $c^{-1}$ to $y^{-1}$
and perhaps adding more edges. In particular, the subgraph spanned by
the $a_i^{\pm 1}$ and by $c^{-1}$ is connected and has no cut
points. Since $c$ and $c^{-1}$ are not connected to each other,
neither $c$ nor $c^{-1}$ can be cut points. If say $a_i$ is a cut
point, $c$ is the only vertex on one side and the associated
automorphism is of the form $c\mapsto ca_i$ and all $a_j$ fixed. It
follows that $a_i$ is either $x^{-1}$ or $y$; either way the
automorphism preserves $\<a_1,\cdots,a_{N-1}\>$ and conjugates
$w$. The proof concludes as before.
\end{proof}

To extend Lemma \ref{L.Vertex} to all trees in $\partial \cvn$, we use
the following result.  Recall that for $T \in \partial \cvn$,
$\mathcal{R}(T)$ denotes the collection of all factors reducing $T$.

\begin{prop}\cite[Theorem 1.3]{R12}
 Let $T \in \partial \cvn$, and assume that $T$ is not arational.
 There is a simplicial tree $T_0$ such that for any $F \in
 \mathcal{R}(T)$, some element of $F$ fixes a point in $T_0$.
\end{prop}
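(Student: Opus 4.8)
The plan is to build $T_0$ out of the canonical decomposition of $T$ as a graph of actions; the key elementary observation will be that a group cannot act with dense orbits on a segment whose pointwise stabilizer is trivial or cyclic, and the one genuinely delicate case will be that of an indecomposable tree.

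First I would invoke the structure theory of very small $\FN$-trees (Levitt, Guirardel). Suppose $T$ has a non-degenerate simplicial part; collapsing each non-simplicial (dense-orbit) piece of $T$ then gives an $\FN$-equivariant map $c\colon T\to T_0$ onto a non-degenerate, very small, simplicial $\FN$-tree whose edges carry trivial or maximal cyclic stabilizers. I claim this $T_0$ works. Let $F\in\mathcal{R}(T)$, acting with dense orbits on a subtree $Y\subseteq T$. If $Y$ is a point then $F$ is elliptic in $T$, hence in $T_0$. If $Y$ is non-degenerate, I would show $c(Y)$ is a point. Otherwise $c(Y)$ contains the interior of an edge $\bar e$ of $T_0$, and a pigeonhole argument shows that $Y$ then contains a non-degenerate sub-arc $I$ of some $\FN$-translate $e$ of the corresponding edge of $T$; but interior points of $e$ have valence $2$ in $T$, so $I$ is a neighbourhood in $Y$ of each of its interior points, while the $F$-orbit of an interior point $x\in I$ meets the interior of $I$ only in $\{x\}$, since distinct translates of $e$ overlap in at most an endpoint and $\Stab(e)$, being trivial or cyclic, fixes $e$ pointwise. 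This contradicts density of the orbit in $Y$, so $c(Y)$ is a point and $F$ is elliptic in $T_0$.

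Next I would treat the case that $T$ has dense orbits. If $T$ is indecomposable then, since an indecomposable free tree is arational by the classification of arational trees, $T$ must be non-free, and hence is dual to a minimal filling measured lamination $\Lambda$ on a compact surface $\Sigma$ with non-empty boundary; the one-boundary-component case is exactly the arational one, so $\Sigma$ has at least two boundary components and $\chi(\Sigma)\le-2$. Since $\Lambda$ is minimal and filling, a finitely generated subgroup acting with dense orbits on a non-degenerate subtree of $T$ would fill $\Sigma$, which a proper free factor --- being of infinite index in $\FN$ --- cannot do; so every $F\in\mathcal{R}(T)$ fixes a point of $T$ and, after conjugating and moving within a $1$-neighbourhood in $\mathcal{F}$, is generated by a boundary curve of $\Sigma$. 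Choosing an essential, non-boundary-parallel simple closed curve $\gamma$ on $\Sigma$ (which exists since $\chi(\Sigma)\le-2$) and letting $T_0$ be the very small simplicial Bass--Serre tree of the splitting of $\FN=\pi_1\Sigma$ dual to $\gamma$, I would observe that every boundary curve of $\Sigma$ lies in a component of $\Sigma\ssm\gamma$ and is therefore conjugate into a vertex group of $T_0$, finishing this case. If $T$ has dense orbits but is decomposable, I would run the same style of argument over the skeleton of Guirardel's transverse covering of $T$ by indecomposable subtrees: a reducing factor either fixes one of the finitely many $\FN$-orbits of points at which distinct pieces meet --- hence is elliptic in the skeleton --- or would have to fill a union of pieces too large for a proper free factor.

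The soft part is the case where $T$ has a simplicial part: once the graph-of-actions decomposition is available, the only real content is that cyclic (and trivial) groups do not act densely on a segment. The step I expect to be the main obstacle is the indecomposable case, where one must import the geometric surface model for indecomposable non-free trees and verify that the reducing factors are precisely the peripheral ones; I do not see a purely tree-theoretic substitute for ``a proper free factor cannot fill $\Sigma$'' that would avoid the structure theory developed in the cited works.
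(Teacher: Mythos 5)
First, note that the paper does not prove this proposition; it is imported verbatim from \cite[Theorem 1.3]{R12}, so there is no internal argument to compare yours against and I am judging the proposal on its own merits. Your Case 1 (nondegenerate simplicial part) is correct and essentially complete: collapsing the dense-orbit vertex trees of the Levitt decomposition gives a nontrivial very small simplicial $T_0$, and your observation that a dense $F$-orbit cannot meet the interior of a simplicial edge in more than finitely many points (since distinct translates of an edge overlap in at most an endpoint and edge stabilizers move interior points in orbits of size at most two) correctly forces $c(Y)$ to be a point. The case division (nondegenerate simplicial part versus dense orbits) is also exhaustive.

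The dense-orbits half has genuine gaps. In the indecomposable case, the implication ``indecomposable and non-free $\Rightarrow$ dual to a minimal filling lamination on a compact surface with boundary'' does \emph{not} follow from the classification of arational trees quoted in the paper (that theorem characterizes arational trees; it says nothing about the structure of non-arational indecomposable ones) --- it is a separate theorem of Reynolds that you must import explicitly. More seriously, the justification ``a proper free factor, being of infinite index, cannot fill $\Sigma$'' is wrong as stated: a cyclic subgroup generated by a filling curve has infinite index and fills. What you actually need is that a finitely generated infinite-index subgroup whose minimal subtree is nondegenerate with dense orbits would have to carry a non-compact minimal sublamination of the lift of $\Lambda$ in the core of its cover, and that minimality of $\Lambda$ then forces the core to cover $\Sigma$ with finite degree; this is the ``lifts of leaves are compact'' lemma and requires proof. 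Finally, the decomposable dense-orbits case is dismissed in one sentence that is not an argument. There you must (a) produce the transverse covering by indecomposable pieces, (b) show that a reducing factor $F$ with $T_F$ meeting at least two pieces cannot act \emph{freely} on the skeleton --- e.g., because a nondegenerate intersection $Y_v\cap T_F$ with trivial $F$-stabilizer would meet each $F$-orbit in a discrete set, contradicting density --- which is exactly why the proposition only asserts that \emph{some element} of $F$ is elliptic in $T_0$ rather than all of $F$; and (c) handle the sub-case $T_F\subseteq Y_v$, where $F$ sits inside $\Stab(Y_v)$ but need not be a free factor of it, so the indecomposable case does not apply verbatim. Note also that the skeleton's edge groups are point stabilizers of $T$, which need not be cyclic, so this $T_0$ need not be very small --- harmless for the literal statement, but relevant to how the paper uses it via Lemma \ref{L.Vertex}.
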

  
It follows that the diameter of $\mathcal{R}(T)$ in $\mathcal{F}$ is
at most two more than the diameter of the $Fill$-image of the set of
simple elliptic elements in $T_0$, hence we get:

\begin{cor}\label{C.Bounded}
 Let $T \in \partial \cvn$, and assume that $T$ is not arational.  The
 set $\mathcal{R}(T)$ has uniformly bounded diameter in $\mathcal{F}$.
\end{cor}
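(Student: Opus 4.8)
The plan is to combine \cite[Theorem 1.3]{R12} with Lemma~\ref{L.Vertex}, in the manner indicated by the sentence preceding the statement. First note that since $T$ is not arational, $\mathcal{R}(T)$ is nonempty, and there is nothing to prove if it is a single point, so assume it contains at least two factors. Apply \cite[Theorem 1.3]{R12} to produce a simplicial tree $T_0$ with the property that every $F\in\mathcal{R}(T)$ contains a nontrivial element that fixes a point of $T_0$. One must record here that the tree $T_0$ furnished by that result is very small (and simplicial), since that is exactly the hypothesis under which Lemma~\ref{L.Vertex} is stated; this should follow from the construction, as $T_0$ is built from $T\in\partial\cvn$, which is itself very small.

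Next, for each $F\in\mathcal{R}(T)$ choose a nontrivial element $g_F\in F$ which is elliptic in $T_0$. Because $F$ is a proper free factor and $g_F\in F$, the element $g_F$ is simple, $Fill(g_F)$ is a nontrivial proper free factor, and $Fill(g_F)\leq F$; hence either $Fill(g_F)=F$ in $\mathcal{F}$ or they span an edge, so $d_{\mathcal{F}}(Fill(g_F),F)\leq 1$. Now each $g_F$ is a simple element fixing a point of the very small simplicial tree $T_0$, so Lemma~\ref{L.Vertex} gives that the set $\{\,Fill(g_F) : F\in\mathcal{R}(T)\,\}$ has diameter at most $C$ in $\mathcal{F}$, where $C$ is a constant depending only on $N$. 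Combining this with the previous bound and the triangle inequality, for any $F,F'\in\mathcal{R}(T)$ we get
\[
d_{\mathcal{F}}(F,F') \leq d_{\mathcal{F}}(F,Fill(g_F)) + d_{\mathcal{F}}(Fill(g_F),Fill(g_{F'})) + d_{\mathcal{F}}(Fill(g_{F'}),F') \leq C+2,
\]
so $\diam_{\mathcal{F}}\mathcal{R}(T)\leq C+2$, which is a uniform bound.

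The main (and essentially only) obstacle is the first step: checking that the simplicial tree provided by \cite[Theorem 1.3]{R12} really is very small, so that Lemma~\ref{L.Vertex} applies verbatim; once that is in hand, the rest is a formal consequence of $Fill$ being coarsely $1$-Lipschitz on simple subgroups together with the triangle inequality. If one is worried about this point, an alternative is to apply Lemma~\ref{L.Vertex} to the free splittings obtained by collapsing edge orbits of $T_0$ (as in the proof of that lemma), which only costs an additional universal additive constant.
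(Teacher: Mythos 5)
Your proposal is correct and follows exactly the paper's argument: invoke \cite[Theorem 1.3]{R12} to get the simplicial tree $T_0$, note that each $F\in\mathcal{R}(T)$ contains a simple elliptic element $g_F$ with $Fill(g_F)\leq F$ at distance at most $1$ from $F$, and apply Lemma \ref{L.Vertex} plus the triangle inequality to get the bound $C+2$. The point you flag about $T_0$ being very small is a reasonable thing to check, and the paper implicitly takes it for granted (the tree produced in \cite{R12} is indeed very small).
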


\section{Sequences of Geodesics}

In this section we examine possible accumulation sets of sequences of
geodesics in $\CVN$. The main result is Theorem \ref{U-turn3}.

\subsection{Limits of Sequences of Geodesics}

Fix a basis $\mathcal{B}$ for $\FN$; for $g \in \FN$, let $|g|$ denote
the word length of $g$ in $\mathcal{B}$.  We work in Outer space with
graphs normalized to have volume 1, and we sometimes consider
universal covers.
The following is essential for the remainder
of the paper:

\begin{remark}
All (projectivized) currents come from ($\mathbb{P}M_N$) $M_N$.  
\end{remark}

\noindent For a tree $T \in \overline{\cvn}$ and for $g \in \FN$, we
use $\langle T, g \rangle$ to mean $\langle T, \eta_g \rangle$, which
is the translation length of $g$ in $T$ by Proposition \ref{P.Inter}.
We will use Proposition \ref{P.Inter} below without reference.

Suppose that we have a sequence of geodesics $[S_n,T_n]$ in $\CVN$. We
assume that $d(S_n,T_n)=\log\lambda_n$, that $T_n/\mu_n$ converges to
$T$ and that $S_n/\kappa_n$ converges to $S$ for some
$\lambda_n,\mu_n,\kappa_n \geq 1$.

\begin{lemma}\label{bounded below}
In this situation, $\inf \frac{\kappa_n\lambda_n}{\mu_n} >0$.
\end{lemma}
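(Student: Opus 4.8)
The plan is to exploit the fact that the (asymmetric) Lipschitz metric controls ratios of translation lengths in one direction, and then pass to the limit. First I would record the basic length estimate coming from the geodesic hypothesis: since $d(S_n,T_n)=\log\lambda_n$, there is an optimal difference of markings $f_n\colon S_n\to T_n$ with $\sigma(f_n)=\lambda_n$, and for any conjugacy class $g$ in $\FN$ the immersed loop in $S_n$ representing $g$, which has length $\langle S_n,g\rangle$, is carried by $f_n$ to a loop in $T_n$ freely homotopic to $g$ of length at most $\lambda_n\langle S_n,g\rangle$. Hence
$$
\langle T_n,g\rangle\ \le\ \lambda_n\,\langle S_n,g\rangle\qquad\text{for every }g.
$$
Dividing both sides by $\mu_n$ and inserting $\kappa_n/\kappa_n$ on the right gives
$$
\frac{1}{\mu_n}\langle T_n,g\rangle\ \le\ \frac{\kappa_n\lambda_n}{\mu_n}\cdot\frac{1}{\kappa_n}\langle S_n,g\rangle .
$$

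Next I would argue by contradiction: suppose $\inf_n \kappa_n\lambda_n/\mu_n=0$ and pass to a subsequence along which $\kappa_n\lambda_n/\mu_n\to 0$. Fix a conjugacy class $g$. Since $T_n/\mu_n\to T$ and $S_n/\kappa_n\to S$ in $\barcvn\subseteq\R^{\FN}$, the left-hand side of the displayed inequality converges to $\langle T,g\rangle$, while $\tfrac{1}{\kappa_n}\langle S_n,g\rangle\to\langle S,g\rangle<\infty$; as the scalar factor $\kappa_n\lambda_n/\mu_n$ tends to $0$, the right-hand side tends to $0$. Therefore $\langle T,g\rangle=0$, and since $g$ was arbitrary, the translation length function of $T$ vanishes identically. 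But $T$ is a nontrivial minimal $\FN$-tree, so by minimality its length function is not identically zero (an $\FN$-action all of whose elements are elliptic has a global fixed point, and a minimal action with a fixed point is a point). This contradiction shows $\inf_n \kappa_n\lambda_n/\mu_n>0$.

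I do not expect a serious obstacle here; the two points needing a word of care are (a) the inequality $\langle T_n,g\rangle\le\lambda_n\langle S_n,g\rangle$, which is just the standard ``sup-formula'' description of the Lipschitz distance and uses only the existence of an optimal (hence $\lambda_n$-Lipschitz) difference of markings recalled in Section 2, and (b) the non-triviality of the limit tree $T$, which is part of the ambient setup of this section and guarantees $\langle T,\cdot\rangle\not\equiv 0$.
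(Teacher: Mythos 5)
Your proof is correct and is essentially identical to the paper's: both fix an arbitrary $g$, use the Lipschitz bound $\langle T_n,g\rangle\le\lambda_n\langle S_n,g\rangle$, rescale by $\mu_n$ and $\kappa_n$, and pass to the limit to conclude $\langle T,g\rangle=0$ for all $g$, contradicting nontriviality of $T$. The extra remarks you add about minimality and the existence of optimal maps are just the standard background the paper takes for granted.
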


\begin{proof}
Fix some $g\in \FN$. Then $\lambda_n\langle S_n,g \rangle \geq \langle
T_n,g \rangle$ i.e. $$\frac{\kappa_n\lambda_n}{\mu_n} \langle S_n/\kappa_n,g
\rangle \geq \langle T_n/\mu_n,g\rangle.$$ 
Passing to the limit and assuming
$\kappa_n\lambda_n/\mu_n\to 0$ gives $\langle T,g\rangle=0$, which is
impossible, since $g$ was arbitrary.
\end{proof}

\begin{lemma}\label{bounded above}
If $\frac{\kappa_n\lambda}{\mu_n}$ is bounded above then $T^*\supseteq S^*$.
\end{lemma}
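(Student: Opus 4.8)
The plan is to play off the two bounds in the hypothesis of Lemma~\ref{bounded above} against Lemma~\ref{bounded below} and the continuity of the Kapovich--Lustig length pairing from Proposition~\ref{P.Inter}. Let $\mu\in S^*$, so $\mu\in M_N$ and $\langle S,\mu\rangle=0$; we must show $\langle T,\mu\rangle=0$. Fix a representative current $\mu$ (recall all currents come from $M_N$). First I would exploit the geodesic inequality in the ``reverse'' direction. Since $[S_n,T_n]$ is a geodesic realizing $d(S_n,T_n)=\log\lambda_n$ with the Lipschitz metric, every current $\nu$ satisfies $\langle S_n,\nu\rangle\leq\lambda_n\langle T_n,\nu\rangle$ (the optimal map $T_n\to S_n$ has Lipschitz constant $\lambda_n$ -- here one uses that the Lipschitz distance is realized and $\langle\cdot,\nu\rangle$ is monotone under Lipschitz maps, exactly as in the proof of Lemma~\ref{bounded below} but with the roles of $S_n$ and $T_n$ swapped). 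Dividing by the rescaling constants gives
$$
\langle S_n/\kappa_n,\,\mu\rangle\ \leq\ \frac{\kappa_n\lambda_n}{\mu_n}\,\langle T_n/\mu_n,\,\mu\rangle .
$$

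Wait -- that is the wrong direction to conclude $\langle T,\mu\rangle=0$. Instead I would use the \emph{other} geodesic inequality, $\langle T_n,\nu\rangle\leq\lambda_n\langle S_n,\nu\rangle$, which after rescaling reads
$$
\langle T_n/\mu_n,\,\mu\rangle\ \leq\ \frac{\kappa_n\lambda_n}{\mu_n}\,\langle S_n/\kappa_n,\,\mu\rangle .
$$
Now pass to the limit: the right-hand side is $\big(\text{bounded}\big)\cdot\langle S,\mu\rangle$ by continuity of the pairing (Proposition~\ref{P.Inter}) together with the standing hypothesis that $\kappa_n\lambda_n/\mu_n$ is bounded above, and $\langle S,\mu\rangle=0$ because $\mu\in S^*$. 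The left-hand side converges to $\langle T,\mu\rangle$, again by continuity. Hence $\langle T,\mu\rangle=0$, i.e. $\mu\in T^*$. Since $\mu\in S^*$ was arbitrary, $S^*\subseteq T^*$.

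The only real content beyond bookkeeping is justifying the inequality $\langle T_n,\nu\rangle\leq\lambda_n\langle S_n,\nu\rangle$ for an arbitrary current $\nu$. For a counting current $\eta_g$ this is just $l_{T_n}(g)\leq\lambda_n\,l_{S_n}(g)$, which follows from the existence of a difference-of-markings map $S_n\to T_n$ of Lipschitz constant $\le\lambda_n=e^{d(S_n,T_n)}$ (after orienting the geodesic appropriately; note the Lipschitz metric is asymmetric, so one should be careful which direction $d(S_n,T_n)$ refers to, and if necessary pass to $\lambda_n'=e^{d(T_n,S_n)}$, which differs from $\lambda_n$ by a uniformly bounded multiplicative factor on the geodesic and does not affect the boundedness assumption). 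The general case follows since counting currents of primitive classes are dense in $M_N$ and the pairing is continuous and $\mathbb{R}_{\geq0}$-linear in the second variable. I expect this density/continuity passage -- making sure the asymmetry of the metric does not spoil the bounded factor -- to be the main (though minor) obstacle; everything else is a direct limit argument parallel to Lemma~\ref{bounded below}.
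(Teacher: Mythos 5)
Your proof is correct and is essentially the paper's argument: the inequality $\langle T_n,g\rangle\le\lambda_n\langle S_n,g\rangle$ (from the optimal $\lambda_n$-Lipschitz difference-of-markings map $S_n\to T_n$), rescaled by $\kappa_n,\mu_n$, combined with boundedness of $\kappa_n\lambda_n/\mu_n$, continuity of the pairing, and density of counting currents in $M_N$. One small caution: the detour about passing to $e^{d(T_n,S_n)}$ is unnecessary (the convention $d(S_n,T_n)=\log\lambda_n$ already gives the inequality in the direction you need), and the parenthetical claim that the two asymmetric distances differ by a uniformly bounded factor is false in general outside the thick part --- but since that branch is never used, the proof stands.
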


\begin{proof}
We may assume $\kappa_n\lambda_n/\mu_n=1$. As above, for any $g\in \FN$
we have $\langle S,g \rangle \geq \langle T,g \rangle$.  Let $\nu$ be
some current, and let $g_n \in \FN$ be such that $g_n/|g_n| \to \nu$.
Passing to the limit gives that $\langle S,\nu \rangle =0$ implies
that $\langle T,\nu \rangle =0$.
\end{proof}

The same line of reasoning shows:

\begin{lemma}\label{L.LipschitzLam}
 Let $S, T \in \barcvn$.  If there is a Lipschitz map $S \to T$, then
 $S^* \subseteq T^*$.
\end{lemma}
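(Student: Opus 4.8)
The plan is to upgrade the pointwise comparison of translation lengths coming from a Lipschitz map to a comparison of the length pairings against arbitrary currents, exactly as in the proofs of Lemmas \ref{bounded above} and \ref{bounded below}. Let $h:S\to T$ be the ($\FN$-equivariant) Lipschitz map, with Lipschitz constant $\lambda\geq 0$. First I would observe that for every $g\in\FN$ acting hyperbolically on $S$, applying $h$ to a fundamental domain of the axis of $g$ in $S$ produces a path in $T$ from a point $x$ to $gx$ of length at most $\lambda\, l_S(g)$; since $l_T(g)$ is the infimum of lengths of such paths, this gives $l_T(g)\leq \lambda\, l_S(g)$. If $g$ is elliptic in $S$ the inequality is trivial. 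By Proposition \ref{P.Inter} this reads $\langle T,\eta_g\rangle \leq \lambda\,\langle S,\eta_g\rangle$ for every counting current $\eta_g$.

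Next I would pass to the limit. Every $\mu\in M_N$ is (a positive scalar multiple of) a weak* limit of counting currents $\eta_{g_n}/|g_n|$ associated to primitive conjugacy classes $g_n$, by the very definition of $\mathbb{P}M_N$ as the closure of the projectivized primitive counting currents. Applying the inequality above to each $g_n$, dividing by $|g_n|$, and using the continuity of the length pairing in both variables (Proposition \ref{P.Inter}), we obtain $\langle T,\mu\rangle \leq \lambda\,\langle S,\mu\rangle$. Consequently, if $\mu\in S^*$, i.e.\ $\langle S,\mu\rangle=0$, then $\langle T,\mu\rangle=0$, so $\mu\in T^*$; this proves $S^*\subseteq T^*$. (The fact that both sides of the inequality scale the same way under rescaling $\mu$ means projective convergence is all that is needed, which matches the fact that $S^*,T^*\subseteq\mathbb{P}M_N$.)

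I do not anticipate a real obstacle here; the only point requiring a word of care is the passage from counting currents to general currents in $M_N$, and that is handled precisely by the defining property of $M_N$ together with continuity of $\langle\cdot,\cdot\rangle$, both recorded earlier in the text. The argument is the direct analogue of Lemma \ref{bounded above}, with the role of the limiting rescaled maps $S_n/\kappa_n\to S$, $T_n/\mu_n\to T$ replaced by the single Lipschitz map $S\to T$.
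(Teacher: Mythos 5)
Your proof is correct and is exactly the argument the paper intends: the paper gives no separate proof for this lemma, stating only that it follows by "the same line of reasoning" as Lemma \ref{bounded above}, namely deducing $l_T(g)\leq\lambda\, l_S(g)$ from the (equivariant) Lipschitz map and then passing to arbitrary currents in $M_N$ via approximation by counting currents and continuity of the length pairing. Your write-up fills in those steps faithfully, so there is nothing to add.
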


To the sequence of geodesics $[S_n,T_n]$ we associate a closed subset of
projectivized measured currents $\mathcal C([S_n,T_n])$ defined as
the set of (projective classes of) those $\nu$ that can be represented
as $\lim \gamma_n/|\gamma_n|$ where $\gamma_n$ is a maximally
stretched simple loop in
$S_n$ (i.e. a legal simple loop in the tension graph for some optimal map
$S_n\to T_n$, see Section 2.5).  Without further comment, we always allow
passing to a subsequence of $[S_n,T_n]$.

\begin{lemma}\label{currents}
Let $[S_n,T_n]$ be geodesics such that $d(S_n,T_n)=\log\lambda_n$,
$T_n/\mu_n\to T$ and such that $S_n/\kappa_n\to S$.
If $\kappa_n\lambda_n/\mu_n\to\infty$, then $\mathcal C([S_n,T_n])
\subseteq S^*$.
\end{lemma}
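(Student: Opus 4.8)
The plan is to exploit the same linear inequality between length pairings that drove Lemmas \ref{bounded below}--\ref{L.LipschitzLam}, but now pushed to the limit in the regime $\kappa_n\lambda_n/\mu_n\to\infty$. Fix a current $\nu\in\mathcal C([S_n,T_n])$, say $\nu=\lim\gamma_n/|\gamma_n|$ where $\gamma_n$ is a maximally stretched simple loop for an optimal map $S_n\to T_n$ with Lipschitz constant $\lambda_n$. The point of maximal stretching is exactly that $\langle T_n,\gamma_n\rangle=\lambda_n\langle S_n,\gamma_n\rangle$, so this is an equality rather than an inequality, which is what will let us conclude $\langle S,\nu\rangle=0$ rather than merely a one-sided comparison.

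First I would rescale: divide by $|\gamma_n|$ to get currents converging to $\nu$, and divide $S_n$ by $\kappa_n$ and $T_n$ by $\mu_n$ so the trees converge to $S$ and $T$. The maximal-stretching equality becomes
\[
\frac{1}{\mu_n}\langle T_n,\gamma_n\rangle=\frac{\lambda_n}{\mu_n}\langle S_n,\gamma_n\rangle,
\]
i.e.
\[
\Big\langle \frac{T_n}{\mu_n},\frac{\gamma_n}{|\gamma_n|}\Big\rangle=\frac{\kappa_n\lambda_n}{\mu_n}\Big\langle \frac{S_n}{\kappa_n},\frac{\gamma_n}{|\gamma_n|}\Big\rangle .
\]
By Kapovich--Lustig continuity (Proposition \ref{P.Inter}) the left side converges to the finite number $\langle T,\nu\rangle$. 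Since $\kappa_n\lambda_n/\mu_n\to\infty$, the factor $\langle S_n/\kappa_n,\gamma_n/|\gamma_n|\rangle$ must go to $0$; and by continuity that limit is $\langle S,\nu\rangle$. Hence $\langle S,\nu\rangle=0$, which is precisely $\nu\in S^*$. Since $\nu$ was an arbitrary element of $\mathcal C([S_n,T_n])$ and that set is closed, this gives $\mathcal C([S_n,T_n])\subseteq S^*$.

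The one genuine subtlety is the bookkeeping on the witness loops: I need the $\gamma_n$ to have bounded length in the volume-$1$ normalization (so that $\langle S_n,\gamma_n\rangle$, hence $\langle S_n/\kappa_n,\gamma_n/|\gamma_n|\rangle$, stays comparable to $1/|\gamma_n|$ times a controlled quantity, and in particular does not secretly blow up and cancel the divergence of $\kappa_n\lambda_n/\mu_n$). But this is exactly guaranteed by the discussion in Section 2.5: there is always a witness crossing each edge at most twice and some edge exactly once, so it has length $<2$ and represents a basis element; in particular $|\gamma_n|$ is bounded below and $\langle S_n/\kappa_n,\gamma_n\rangle/|\gamma_n|$ stays within fixed bounds of $\langle S_n/\kappa_n,\gamma_n\rangle$, so the divergence of $\kappa_n\lambda_n/\mu_n$ genuinely forces $\langle S_n/\kappa_n,\gamma_n/|\gamma_n|\rangle\to 0$. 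The other point to note is that $\langle T,\nu\rangle$ is automatically finite (it is a limit of finite quantities, or one can observe it is at most $\langle S,\nu\rangle\cdot\limsup(\kappa_n\lambda_n/\mu_n)$ only after we know the product behaves — better to just invoke continuity directly). Once these normalizations are in place the argument is a two-line limit computation; I expect no further obstacle.
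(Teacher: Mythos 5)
Your argument is exactly the paper's: use the maximal-stretching equality $\langle T_n,\gamma_n\rangle=\lambda_n\langle S_n,\gamma_n\rangle$, rescale, and let the joint continuity of the Kapovich--Lustig pairing force $\langle S,\nu\rangle=0$ once $\kappa_n\lambda_n/\mu_n\to\infty$, since $\langle T_n/\mu_n,\gamma_n/|\gamma_n|\rangle\to\langle T,\nu\rangle<\infty$. The extra bookkeeping you worry about at the end is unnecessary (continuity of the pairing already handles both sides), but it does no harm; the proof is correct and matches the paper's.
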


\begin{proof}
Since $\gamma_n$ is legal we have $\lambda_n\langle S_n,\gamma_n
\rangle =\langle T_n,\gamma_n \rangle$ i.e.
$$\langle S_n/\kappa_n,\gamma_n/|\gamma_n|\rangle =\frac
{\mu_n}{\lambda_n\kappa_n}\langle T_n/\mu_n,\gamma_n/|\gamma_n| \rangle$$ 
On the other hand, continuity of $\langle \cdot, \cdot \rangle$ gives $\langle T_n/\mu_n,\gamma_n/|\gamma_n| \rangle \to \langle T,\nu \rangle < \infty$, so 
$\langle S,\nu \rangle=0$.
\end{proof}

The following result summarizes the previous lemmas.

\begin{thm}\label{U-turn3}
Let $[S_n,T_n]$ be a sequence of geodesics, $U_n\in [S_n,T_n]$,
and assume $S_n/\kappa_n\to S$, $U_n/\rho_n\to
U$, $T_n/\mu_n\to T$. Then
\begin{enumerate}[(i)]
\item If $\rho_n e^{d(U_n,T_n)}/\mu_n$ is bounded, $T^*\supseteq U^*$; in
  particular, $T$ is not free simplicial if $U$ is not.
\item If $\rho_n e^{d(U_n,T_n)}/\mu_n$ is not bounded, then $S^* \cap
  U^* \neq \emptyset$.
\end{enumerate}
\end{thm}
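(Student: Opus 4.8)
The statement collects the preceding lemmas applied to the subgeodesic from $U_n$ to $T_n$ (for part (i)) and the subgeodesic from $S_n$ to $U_n$ (for part (ii)). The plan is to rescale so that the quantities $\kappa_n, \rho_n, \mu_n$ play the roles they did in Lemmas \ref{bounded above} and \ref{currents}, and to feed in the appropriate convergence hypotheses.

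For part (i), I would observe that $[U_n, T_n]$ is itself a geodesic (a subsegment of the geodesic $[S_n, T_n]$), with $d(U_n,T_n)=\log\lambda_n'$ where $e^{d(U_n,T_n)}=\lambda_n'$, and with $U_n/\rho_n \to U$, $T_n/\mu_n \to T$. The hypothesis that $\rho_n \lambda_n'/\mu_n = \rho_n e^{d(U_n,T_n)}/\mu_n$ is bounded is exactly the hypothesis of Lemma \ref{bounded above} (with the triple $(\rho_n, \lambda_n', \mu_n)$ in place of $(\kappa_n,\lambda_n,\mu_n)$), and that lemma gives $T^* \supseteq U^*$. The ``in particular'' clause then follows because a free simplicial tree $V \in \CVN$ has $V^* = \emptyset$, so if $U$ is not free simplicial, then (by Remark \ref{nonempty}, or rather by the general fact that non-simplicial-or-non-free trees lie in $\partial\CVN$ and have nonempty dual) $U^* \neq \emptyset$, hence $T^* \supseteq U^* \neq \emptyset$, so $T^*$ is nonempty and $T$ cannot be free simplicial either. (One should be slightly careful: ``not free simplicial'' for $U \in \barCVN$ means $U \in \partial\CVN$, and Remark \ref{nonempty} guarantees $U^* \neq \emptyset$ precisely in that case.)

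For part (ii), the relevant subgeodesic is $[S_n, U_n]$, a geodesic with $d(S_n, U_n) = \log(\lambda_n/\lambda_n')$ where $\lambda_n = e^{d(S_n,T_n)}$ and $\lambda_n' = e^{d(U_n,T_n)}$ (using $d(S_n,U_n)+d(U_n,T_n)=d(S_n,T_n)$ since $U_n$ lies on the geodesic), and with $S_n/\kappa_n \to S$, $U_n/\rho_n \to U$. The quantity governing Lemma \ref{currents} applied to this subgeodesic is $\kappa_n \cdot e^{d(S_n,U_n)}/\rho_n = \kappa_n \lambda_n/(\rho_n \lambda_n')$. I claim this tends to infinity along a subsequence: indeed, $\kappa_n \lambda_n/\mu_n$ is bounded below (Lemma \ref{bounded below}), while $\rho_n \lambda_n'/\mu_n$ is unbounded by hypothesis, so $\kappa_n\lambda_n/(\rho_n\lambda_n') = (\kappa_n\lambda_n/\mu_n)\cdot(\mu_n/(\rho_n\lambda_n'))$ is unbounded; passing to a subsequence we may assume it tends to $\infty$. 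Then Lemma \ref{currents} applies to $[S_n,U_n]$ and yields $\mathcal{C}([S_n,U_n]) \subseteq S^*$. Separately, applying part (i)'s reasoning (i.e. Lemma \ref{bounded above}) is not available here since the relevant ratio is unbounded; instead I would apply Lemma \ref{currents} a second time — or rather note that a maximally stretched simple loop $\gamma_n$ in $S_n$ for an optimal map $S_n \to U_n$ also satisfies $\langle U_n, \gamma_n\rangle = e^{d(S_n,U_n)}\langle S_n,\gamma_n\rangle$, and tracking the same estimate with $U_n/\rho_n \to U$ in place of $T_n/\mu_n$ gives that any limit $\nu$ of $\gamma_n/|\gamma_n|$ also satisfies $\langle U, \nu\rangle = 0$, i.e. $\mathcal{C}([S_n,U_n]) \subseteq U^*$ as well. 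The set $\mathcal{C}([S_n,U_n])$ is nonempty (there always exists such a maximally stretched simple loop, by the discussion in Section 2.5, and $\mathbb{P}M_N$ is compact so a subsequential limit exists), so $S^* \cap U^* \neq \emptyset$.

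The main obstacle — and the only genuinely non-routine point — is verifying in part (ii) that $\kappa_n\lambda_n/(\rho_n\lambda_n')\to\infty$ on a subsequence, which is where Lemma \ref{bounded below} is essential: without the uniform lower bound on $\kappa_n\lambda_n/\mu_n$ one could not convert the unboundedness of $\rho_n\lambda_n'/\mu_n$ into unboundedness of the quantity controlling the subgeodesic $[S_n,U_n]$. Everything else is bookkeeping: splitting the geodesic at $U_n$, matching rescaling constants, and citing Lemmas \ref{bounded below}, \ref{bounded above}, \ref{currents}. I would also remark that the nonemptiness of $\mathcal{C}([S_n,U_n])$ relies on the existence of a short maximally stretched loop representing a basis element (hence a class in $M_N$), consistent with the standing Remark that all currents considered come from $M_N$.
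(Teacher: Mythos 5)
Part (i) is fine and is exactly the paper's argument (Lemma \ref{bounded above} applied to the subgeodesic $[U_n,T_n]$). Part (ii), however, contains a genuine gap at the step you yourself flag as the crux. You claim that $\kappa_n e^{d(S_n,U_n)}/\rho_n=\kappa_n\lambda_n/(\rho_n\lambda_n')$ is unbounded because it equals $(\kappa_n\lambda_n/\mu_n)\cdot(\mu_n/(\rho_n\lambda_n'))$ with the first factor bounded below and the second unbounded--but the second factor $\mu_n/(\rho_n\lambda_n')$ tends to $0$ in case (ii), not to infinity, so the product of (bounded below) and ($\to 0$) gives you nothing: the quantity $\kappa_n e^{d(S_n,U_n)}/\rho_n$ is merely bounded below (Lemma \ref{bounded below} applied to $[S_n,U_n]$) and can perfectly well stay bounded (e.g.\ $U_n=S_n$, $\rho_n=\kappa_n$ makes it identically $1$ while the hypothesis of (ii) still holds). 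So Lemma \ref{currents} cannot be applied to $[S_n,U_n]$. A second problem: even granting that ratio $\to\infty$, your deduction that $\mathcal C([S_n,U_n])\subseteq U^*$ by ``tracking the same estimate'' fails, because for a witness $\gamma_n$ of $S_n\to U_n$ one has $\langle U_n/\rho_n,\gamma_n/|\gamma_n|\rangle=(e^{d(S_n,U_n)}\kappa_n/\rho_n)\langle S_n/\kappa_n,\gamma_n/|\gamma_n|\rangle$, an indeterminate product of the form $\infty\cdot 0$; continuity only gives $\langle U,\nu\rangle<\infty$, not $=0$.

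The correct route (the paper's) keeps $T_n$ as the terminal point of both geodesics: the hypothesis of (ii) directly gives $\rho_n\lambda_n'/\mu_n\to\infty$, hence $\mathcal C([U_n,T_n])\subseteq U^*$ by Lemma \ref{currents}; multiplying by the bounded-below ratio $\kappa_n e^{d(S_n,U_n)}/\rho_n$ gives $\kappa_n\lambda_n/\mu_n\to\infty$ (here (bounded below)$\times(\to\infty)$ really does go to infinity), hence $\mathcal C([S_n,T_n])\subseteq S^*$; and a witness for the optimal map $S_n\to T_n$ is, after passing through $U_n$, also a witness for $U_n\to T_n$, so $\mathcal C([S_n,T_n])\subseteq\mathcal C([U_n,T_n])$. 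Nonemptiness of $\mathcal C([S_n,T_n])$ then finishes the proof. Your observation about the existence of short maximally stretched loops representing basis elements is the right justification for nonemptiness; it is the splitting point $U_n$ versus $T_n$ that needs to be changed.
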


\begin{proof}
The conclusion in (i) follows from Lemma \ref{bounded above}. In case
(ii) we may take $\rho_n e^{d(U_n,T_n)}/\mu_n\to\infty$ and therefore
$\kappa_n e^{d(S_n,T_n)}/\mu_n\to\infty$ since the ratio $\kappa_n
e^{d(S_n,U_n)}/\rho_n$ is bounded below by Lemma
\ref{bounded below}. Then observe that
$\mathcal C([S_n,T_n])\subseteq \mathcal C([U_n,T_n])$ and by Lemma
\ref{currents} $\mathcal C([S_n,T_n])\subseteq S^*$ and $\mathcal
C([U_n,T_n]) \subseteq U^*$. Since $\mathcal
C([S_n,T_n])\neq\emptyset$, we have $S^*\cap U^*\neq\emptyset$.
\end{proof}

\begin{cor}\label{defined and continuous}
 Suppose that $S_n/\kappa_n$ converges to $S$, that $T_n/\mu_n$
 converges to $T$, and let $[S_n,T_n]$ be a geodesic.
 If $S^*=T^*$, then any tree $U$ representing a point in the
 accumulation set of $[S_n,T_n]$ in $\overline{CV}_N$ satisfies
 $U^*\cap T^*\neq\emptyset$. In particular, if $S$ and $T$ are
 arational, then so is $U$, and $U^*=S^*=T^*$.
\end{cor}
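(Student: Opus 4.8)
The plan is to reduce the statement to Theorem~\ref{U-turn3}, with the lemmas of this section handling the one case that Theorem~\ref{U-turn3} leaves open. Fix $U$ representing a point of the accumulation set: after passing to a subsequence there are $U_n\in[S_n,T_n]$ and scalars $\rho_n$ with $U_n/\rho_n\to U$. Write $\lambda_n=e^{d(S_n,T_n)}$, and note $d(S_n,T_n)=d(S_n,U_n)+d(U_n,T_n)$ because $U_n$ lies on the geodesic. I would first observe that we may assume $T^*\neq\emptyset$: if $T^*=\emptyset$ then $S^*=\emptyset$ as well, so $S$ and $T$ are free simplicial and the statement is empty; this reduction costs nothing when $T$ (or $S$) is arational, since then $T\in\partial\CVN$ and $T^*\neq\emptyset$ by Remark~\ref{nonempty}. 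Hence $S^*=T^*\neq\emptyset$ and $S,T\in\partial\CVN$. After a further subsequence we may assume $\rho_n e^{d(U_n,T_n)}/\mu_n$ either tends to $\infty$ or is bounded, and we invoke Theorem~\ref{U-turn3} with these data.

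If $\rho_n e^{d(U_n,T_n)}/\mu_n\to\infty$, part (ii) of Theorem~\ref{U-turn3} gives $S^*\cap U^*\neq\emptyset$, and since $S^*=T^*$ this is exactly what we want. In the bounded case part (i) yields only $U^*\subseteq T^*$, so the real content is to produce an element of $U^*\cap T^*$, and here I would split on the size of $\kappa_n\lambda_n/\mu_n$ (which is bounded below by Lemma~\ref{bounded below}). Suppose first $\kappa_n\lambda_n/\mu_n$ is bounded. Since $\rho_n e^{d(U_n,T_n)}/\mu_n$ is bounded above and, by Lemma~\ref{bounded below} applied to $[U_n,T_n]$, bounded below, the quotient $\kappa_n e^{d(S_n,U_n)}/\rho_n=(\kappa_n\lambda_n/\mu_n)\cdot(\rho_n e^{d(U_n,T_n)}/\mu_n)^{-1}$ is bounded above; Lemma~\ref{bounded above} applied to the geodesic $[S_n,U_n]$ then gives $U^*\supseteq S^*=T^*$, so $U^*=T^*\neq\emptyset$ and we are done.

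Now suppose $\kappa_n\lambda_n/\mu_n\to\infty$. Then Lemma~\ref{currents} gives $\mathcal{C}([S_n,T_n])\subseteq S^*=T^*$, and this set is nonempty: an optimal map $S_n\to T_n$ has a maximally stretched simple loop $\gamma_n$ representing a primitive conjugacy class, and the normalized currents $\eta_{\gamma_n}/|\gamma_n|$ all lie in the compact set $\mathbb{P}M_N$, so a subsequential limit $\nu$ lies in $\mathcal{C}([S_n,T_n])$; in particular $\langle T,\nu\rangle=0$. Because $U_n$ lies on $[S_n,T_n]$, additivity of the Lipschitz distance along the geodesic forces $\gamma_n$ to be maximally stretched also from $U_n$ to $T_n$, i.e. $e^{d(U_n,T_n)}\langle U_n,\gamma_n\rangle=\langle T_n,\gamma_n\rangle$; rescaling gives
$$\langle U_n/\rho_n,\ \gamma_n/|\gamma_n|\rangle\;=\;\frac{\mu_n}{\rho_n\,e^{d(U_n,T_n)}}\,\langle T_n/\mu_n,\ \gamma_n/|\gamma_n|\rangle .$$
By Lemma~\ref{bounded below} (for $[U_n,T_n]$) and the standing hypothesis the coefficient stays between two positive constants, so passing to a subsequence it converges to some $c\in(0,\infty)$; since $\langle T_n/\mu_n,\gamma_n/|\gamma_n|\rangle\to\langle T,\nu\rangle=0$ by continuity of the length pairing (Proposition~\ref{P.Inter}), we conclude $\langle U,\nu\rangle=0$. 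Thus $\nu\in U^*\cap T^*$, completing the first assertion.

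Finally, for the last assertion, if $S$ and $T$ are arational choose any $\mu\in U^*\cap T^*$ produced above; since $T$ is arational and $\langle T,\mu\rangle=0$, Theorem~\ref{T.UniqueLam} shows $U$ is arational, $L(T)=L(U)$, and $T^*=U^*$, which together with $S^*=T^*$ gives $U^*=S^*=T^*$. I expect the bounded case of Theorem~\ref{U-turn3} to be the only genuine difficulty: part (i) there only bounds $U^*$ from above, and the asymmetry of the Lipschitz metric blocks a symmetric argument, so one must bring in an auxiliary object --- the geodesic $[S_n,U_n]$ or a maximally stretched loop --- to pin down an honest common zero-length current, and it is here that the hypothesis $S^*=T^*$ does the essential work.
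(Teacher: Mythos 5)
Your proof is correct, and it follows the route the paper clearly intends (the corollary is stated without proof, immediately after the sentence ``the following result summarizes the previous lemmas''): deduce everything from Theorem \ref{U-turn3} together with Lemmas \ref{bounded below}, \ref{bounded above} and \ref{currents}. What you add, and what makes your write-up more than a one-line citation, is the treatment of case (i) of Theorem \ref{U-turn3}: there the theorem only yields $U^*\subseteq T^*$, which is compatible with $U^*=\emptyset$ (i.e.\ with $U$ lying in $\CVN$) --- indeed the paper itself exploits exactly this loophole in the proof of Corollary \ref{injective} to force accumulation points into $\CVN$ when $S^*\neq T^*$. Your two sub-cases close it cleanly: when $\kappa_n\lambda_n/\mu_n$ is bounded you run Lemma \ref{bounded above} on the initial segment $[S_n,U_n]$ to get $U^*\supseteq S^*=T^*$, and when $\kappa_n\lambda_n/\mu_n\to\infty$ you push a witness current $\nu\in\mathcal C([S_n,T_n])\subseteq S^*=T^*$ onto $U$ via the identity $e^{d(U_n,T_n)}\langle U_n,\gamma_n\rangle=\langle T_n,\gamma_n\rangle$ (the same identity behind the paper's inclusion $\mathcal C([S_n,T_n])\subseteq\mathcal C([U_n,T_n])$), using that the rescaling factor $\mu_n/(\rho_n e^{d(U_n,T_n)})$ is pinched between positive constants. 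Both steps check out. One small caveat on your opening reduction: if $T^*=\emptyset$ the conclusion $U^*\cap T^*\neq\emptyset$ is false rather than vacuous, so the corollary should really be read with $S,T\in\partial\CVN$ understood; since Remark \ref{nonempty} guarantees $T^*\neq\emptyset$ in every case the corollary is actually applied (in particular the arational case), this is a defect of the statement in a degenerate case, not of your argument.
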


\begin{cor}\label{injective}
 Suppose $S_n/\lambda_n \rightarrow S$, $T_n \rightarrow T$, and let
 $[S_n,T_n]$ be a geodesic.  If $S$ and $T$ are arational with
 $S^*\neq T^*$, then the accumulation set of $[S_n,T_n]$ in $\barCVN$
 includes points of $\CVN$.
\end{cor}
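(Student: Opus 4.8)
The plan is to argue by contradiction: suppose every accumulation point of $[S_n,T_n]$ in $\barCVN$ lies in $\partial\CVN$, and extract a contradiction with $S^*\neq T^*$. Since $\barCVN$ is compact, after passing to a subsequence I may assume that the compact sets $[S_n,T_n]$ converge, in the Hausdorff metric on $\barCVN$, to a compact set $\mathcal A$. Each $[S_n,T_n]$ is a sub-arc of a geodesic, hence a continuum, and a Hausdorff limit of continua is a continuum, so $\mathcal A$ is connected. Moreover $[S]\in\mathcal A$ and $[T]\in\mathcal A$ (because $S_n/\lambda_n\to S$ and $T_n\to T$), and $[S]\neq[T]$ since $S^*\neq T^*$ forces $L(S)\neq L(T)$; and $\mathcal A$ is contained in the accumulation set of the sequence, hence in $\partial\CVN$ by the standing assumption.

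The heart of the matter is to identify the points of $\mathcal A$. Fix $[V]\in\mathcal A$. By Hausdorff convergence there are $U_n\in[S_n,T_n]$ with $U_n/\rho_n\to V$ along a subsequence, and $V^*\neq\emptyset$ by Remark \ref{nonempty}. Now apply Theorem \ref{U-turn3} (with $\kappa_n=\lambda_n$ and $\mu_n=1$), after passing to a subsequence on which $\rho_n e^{d(U_n,T_n)}$ converges in $[0,\infty]$. If the limit is finite, then $V^*\subseteq T^*$; choosing $\mu\in V^*$ we get $\langle T,\mu\rangle=0$, so since $T$ is arational, Theorem \ref{T.UniqueLam} gives that $V$ is arational and $V^*=T^*$. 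If the limit is $\infty$, then $V^*\cap S^*\neq\emptyset$, and the same theorem, using that $S$ is arational, gives that $V$ is arational and $V^*=S^*$. Thus every $V$ with $[V]\in\mathcal A$ is arational with $V^*\in\{S^*,T^*\}$.

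To finish, write $\mathcal A=\mathcal A_S\sqcup\mathcal A_T$ according to whether $V^*=S^*$ or $V^*=T^*$. Both pieces are nonempty ($[S]\in\mathcal A_S$, $[T]\in\mathcal A_T$) and disjoint (since $S^*\neq T^*$). Each is closed in $\mathcal A$: if $[V_k]\to[V]$ in $\mathcal A$ with $V_k^*=S^*$, then for any $\mu\in S^*$ continuity of the length pairing (Proposition \ref{P.Inter}) gives $\langle V,\mu\rangle=0$, so $S^*\subseteq V^*$; since $V$ is arational, picking $\nu\in S^*\subseteq V^*$ and applying Theorem \ref{T.UniqueLam} yields $V^*=S^*$, that is $[V]\in\mathcal A_S$. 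Hence $\mathcal A$ is a disjoint union of two nonempty closed subsets, contradicting connectedness of the continuum $\mathcal A$. Therefore the accumulation set of $[S_n,T_n]$ must meet $\CVN$.

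I expect the main obstacle to be the second paragraph: one must squeeze out of Theorem \ref{U-turn3} not merely a comparison of $V^*$ with $S^*$ or $T^*$, but the actual \emph{arationality} of $V$, which is exactly what the duality statement of Theorem \ref{T.UniqueLam} supplies; and one must be careful to nest the subsequences consistently (first the Hausdorff limit of the arcs, then $U_n/\rho_n\to V$ for a fixed $[V]\in\mathcal A$, then convergence of $\rho_n e^{d(U_n,T_n)}$ in $[0,\infty]$), noting that $V^*$ itself does not depend on these choices.
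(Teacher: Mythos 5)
Your proof is correct and takes essentially the same route as the paper's: both rest on connectedness of the accumulation set, the dichotomy from Theorem \ref{U-turn3}, the upgrade to $V^*=S^*$ or $V^*=T^*$ via Theorem \ref{T.UniqueLam} and Remark \ref{nonempty}, and a disconnection argument. The paper runs this directly (concluding the connected set must contain trees with $U^*=\emptyset$, i.e.\ points of $\CVN$) rather than by contradiction, but the content is identical.
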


\begin{proof}
The accumulation set is connected, it includes $S,T$ and every tree
$U$ in it satisfies either $U^*\cap S^*\neq\emptyset$ or $U^*\subseteq
T^*$. By Theorem \ref{T.UniqueLam} 
the first alternative is equivalent to $U^*=S^*$ and the
second to $U^*=\emptyset$ or $U^*=T^*$. Since the set of trees $U$
with $U^*=S^*$ is closed and disjoint from the set where $U^*=T^*$,
the accumulation set must include some trees with $U^*=\emptyset$
\end{proof}

\subsection{Reducing Factors are Visible}

Recall the construction of folding lines from Section 2.5.

A train track structure on a graph is {\it recurrent} if there is a
legal loop crossing every edge, and it is {\it birecurrent} if there
is a legal loop that crosses every edge with either orientation.

\begin{lemma}\label{recurrent}
 Let $T \in \CVN$, and let $\Sigma$ be a fixed simplex in
 $\CVN$.  There exists a point
 $T_0$ in $\Sigma$ such that any optimal map $f:T_0 \to T$ induces a
 recurrent train track structure on $T_0/\FN$.
\end{lemma}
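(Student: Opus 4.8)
\textbf{Proof plan for Lemma \ref{recurrent}.}

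The plan is to fix an optimal map $f:T_0\to T$ for a point $T_0$ in the simplex $\Sigma$ chosen so that the tension graph $\Delta_f$ is as large as possible, and then argue that for such a choice $\Delta_f$ must be all of $T_0/\FN$ and the induced train track structure is recurrent. First I would observe that since $T$ is a point of $\CVN$ (i.e.\ free and simplicial), the Lipschitz displacement from points of the fixed simplex $\Sigma$ is a proper function on $\Sigma$ in the direction we care about, so after possibly moving inside $\Sigma$ we may assume the tension graph $\Delta_f$ of an optimal map $f:T_0\to T$ is maximal among all such $T_0\in\Sigma$; this uses only the standard compactness/semicontinuity properties of the Lipschitz metric recalled in Section 2.5. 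The content of the lemma is that this maximality forces recurrence.

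The heart of the argument is a deformation: suppose some edge $e$ of $T_0/\FN$ is \emph{not} crossed by any legal loop contained in $\Delta_f$ (in particular $e$ may or may not lie in $\Delta_f$). I would then shrink the length of $e$ and redistribute the freed-up volume onto the edges of the tension graph, staying inside the simplex $\Sigma$, and check that the Lipschitz constant of the induced optimal map to $T$ strictly decreases on no edge of $\Delta_f$ while the displacement $d(T_0',T)$ does not increase and $\Delta_{f'}$ strictly enlarges — contradicting maximality. To make this precise I would use the witness description from Section 2.5: the maximally stretched loops are exactly the legal loops in $\Delta_f$, and there is always such a witness crossing every edge of $\Delta_f$ (with recurrence being precisely the statement that such a witness crosses \emph{every} edge, not just those of $\Delta_f$). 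So it suffices to show $\Delta_f=T_0/\FN$ when $\Delta_f$ is maximal, and then separately that a legal loop crossing every edge of $\Delta_f=T_0/\FN$ exists. The first follows because any edge outside $\Delta_f$ can be shrunk (its length redistributed to $\Delta_f$) without raising the slope on the tension graph, which enlarges $\Delta_f$; iterating, $\Delta_f$ becomes everything. For the second, I would pass to the subgraph $\Delta\subseteq\Delta_f$ carrying a genuine train track structure (at least two gates everywhere, as in Section 2.5), note it supports a legal loop crossing each of its edges, and use a further length-redistribution toward any edge of $\Delta_f\ssm\Delta$ to contradict maximality unless $\Delta=\Delta_f$, whence the train track structure on all of $T_0/\FN$ is recurrent.

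The main obstacle I expect is making the redistribution argument genuinely stay inside the \emph{fixed} simplex $\Sigma$ while controlling the optimal map: shrinking one edge and lengthening others changes the metric graph but the underlying marked graph (hence the simplex) is preserved, so this is fine, but one must verify that the new optimal map's tension graph behaves as claimed — i.e.\ that lengthening edges of $\Delta_f$ does not accidentally create new slow edges or destroy the gate structure. I would handle this by taking the deformation parameter small and using continuity of the Lipschitz constant and of the slopes in the edge lengths, together with the observation that an edge already in $\Delta_f$ stays in the tension graph under small perturbations that do not decrease its length relative to the others. A secondary subtlety is that the witness loops have length $<2$ and represent basis elements, which one uses only to know witnesses exist and are ``honest'' loops; this is quoted directly from Section 2.5 and needs no reproof here.
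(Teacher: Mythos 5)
Your proposal has a genuine gap at the heart of the lemma. The reduction you propose --- first show $\Delta_f=T_0/\FN$ with at least two gates at every vertex, then assert that a graph carrying a train track structure ``supports a legal loop crossing each of its edges'' --- begs the question: that assertion \emph{is} recurrence, and it is false in general. A train track structure can have two or more gates at every vertex and full tension graph and still fail to be recurrent. The obstruction, which is exactly the hard case in the paper's proof, is a proper subgraph $G'$ carrying a coherent orientation such that every legal path, once it traverses an edge of $G'$ in that orientation, can never leave $G'$; then every legal loop either lies entirely in $G'$ or entirely avoids it, so no single legal loop crosses every edge. Note that in this situation every edge may still be crossed by \emph{some} legal loop, so your deformation step --- which is triggered by finding an edge crossed by \emph{no} legal loop --- never fires, and your argument terminates without ruling out the bad case. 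Relatedly, Section 2.5 only guarantees a witness crossing every edge at most twice and some edge exactly once (a short witness); it does not give a witness crossing every edge of $\Delta_f$, as your plan assumes.

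The paper handles this by a different mechanism: choose $T_0$ to \emph{minimize} $d(\cdot,T)$ on $\Sigma$ (a proper function, so the minimum exists), deduce $\Delta_f=G$ and the two-gate condition by the easy perturbations you also describe, and then analyze the directed graph on oriented edges whose arrows record legal continuations. A terminal strongly connected component $S$ either gives (bi)recurrence or produces the coherently oriented absorbing subgraph $G'$ above; in the latter case one builds a genuinely new map $f_\epsilon$ by flowing vertices of $G'$ backwards and composing with $f$ (well defined because the several backward images are identified under folding), whose tension graph is a proper subgraph --- contradicting minimality of the distance. Your proposal contains no analogue of this construction. Two further remarks: your extremal choice (``maximal tension graph'') is weaker than the paper's, since after a perturbation the new tension graph is the set of edges realizing the \emph{new} maximal slope and need not contain the old one, so ``$\Delta_{f'}$ strictly enlarges'' requires justification; minimizing the distance makes every perturbation an immediate contradiction. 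And shrinking an edge that already lies in $\Delta_f$ increases its slope, so the direction of your main deformation needs care even where it does apply.
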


\begin{proof}
 Let $T_0 \in \Sigma$ be a point that minimizes
the function $\Sigma\to\R$, $Y\mapsto d(Y,T)$, by which we mean the
$\log$ of the smallest Lipschitz constant of a difference of markings
map $Y\to T$.
Such a point exists since the map is proper.
 
Let $G=T_0/\FN$ and consider an optimal map $f:T_0\to T$. First note
that the tension graph $\Delta$ must be all of $G$ for otherwise we
could increase the metric on $\Delta$ and decrease it in the
complement thus reducing the distance to $T$. Likewise, all vertices
of $\Delta$ must have $\geq 2$ gates, for otherwise we may perturb $f$
to another optimal map whose tenson graph is a proper subgraph, again
reaching contradiction.

Now consider the directed graph $D$ whose vertices are
oriented edges of $G$, where there is a directed edge from $e$ to $e'$ if
$G$ has a legal path of the form $e\cdots e'$. Two vertices in a
directed graph are equivalent if there are directed paths joining each
with the other. A finite directed graph always has an equivalence
class $S$ of vertices so that there are no directed edges from a
vertex in $S$ to a vertex outside $S$. Note that if $S$ contains some
edge $e$ with both orientations, then all edges in $S$ come with both
orientations. Now we have the following possibilities.

{\it Case 1.} $S=D$. Then the train track structure on $G$ is
birecurrent. 

{\it Case 2.} $S$ contains every edge of $G$ with a single
orientation. Then $G$ is recurrent (and has a coherent orientation). 

{\it Case 3.} $S$ consists of edges in a subgraph $G'$ with both
orientations. This is impossible since the 2 gate condition guarantees
that there is a legal path of length 2 with one edge in $G'$ and one
outside. 

{\it Case 4.} $S$ consists of edges in a subgraph $G'$ with a single
orientation. Orient the edges in $G'$ according to $S$ and note that
for every vertex of $G'$ all incoming edges must belong to the same
gate. Also orient all
half-edges outside $G'$ but incident to a vertex $v$ of $G'$ towards
$v$. All such half-edges must belong to the same gate as all incoming
edges within $G'$.

 We now perform a sort of backward flow relative to this orientation
 of $G'$.  Fix a small $\epsilon>0$. For any vertex $x \in G'$, flow
 $x$ backwards along an incoming edge for time $\epsilon$.  This gives
 a multi-valued function $\phi_\epsilon$ on the set of vertices. The
 ambiguity comes from the fact that there may be more than one
 incoming edge at $x$. In a similar way, we have a multivalued
 function $\tilde\phi_\epsilon$ defined on the set of vertices of
 $\tilde G'\subset T_0$ with values in $T_0$. However, all points in
 $\tilde \phi_\epsilon(x)$ are identified under folding, so the composition
 $f_\epsilon=f\tilde\phi_\epsilon$ is a well-defined function from the
 vertex set of $\tilde G'$ to $T$. On the vertices outside $\tilde G'$
 we define $f_\epsilon$ as $f$ and we then extend to the edges
 linearly. The slope on the edges in $\tilde G'$ or disjoint from
 $\tilde G'$ remains the same as before, and on the remaining edges
 the slope of $\tilde f_\epsilon$ is strictly smaller than
 before. Thus the tension graph is a proper subgraph of $G$ and we can
 change the metric as before to reduce the distance to $T$, a
 contradiction.
\end{proof}

\begin{remark}
The result continues to hold if $T$ is in $\partial\CVN$, provided it
does not belong to the closure of $\Sigma$. We do not need this
generalization. 
\end{remark}

\begin{lemma}\label{limit of folding paths}
Let $\Delta_0,\Gamma_i\in \CVN$ and assume $\Gamma_i$ converges to
$T\in\partial {CV}_N$. Let $\gamma_i$ be a folding path from
$\Delta_i$ to $\Gamma_i$, where $\Delta_i$ is in the same simplex as
$\Delta_0$ and is given by Lemma \ref{recurrent}, i.e. every optimal
map $\Delta_i\to\Gamma_i$ has a recurrent train track structure. Then
one of the following holds, after a subsequence.
\begin{enumerate}
\item [(i)] $\Delta_i$ converges to $\Delta\in \CVN$ and certain
  initial segments of $\gamma_i$ converge uniformly on compact sets to
  a folding path (ray) $\gamma$ from $\Delta$ that converges to $S\in
  \partial \CVN$ with $S^*\subseteq T^*$, or
\item [(ii)] $\Delta_i$ converges to a tree $S\in\partial\cvn$, and
  every element elliptic in $S$ is also elliptic in $T$.
\end{enumerate}
\end{lemma}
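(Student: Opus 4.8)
The plan is to run a standard Arzel\`a-Ascoli argument on the folding paths $\gamma_i$, splitting into two cases according to whether the starting points $\Delta_i$ stay in a compact part of $\CVN$ or escape to the boundary. First I would normalize: after passing to a subsequence, arrange that $\Delta_i/a_i \to S$ for some tree $S\in\barcvn$ and some scalars $a_i\ge 1$, and likewise that the folding maps $f_i:\Delta_i\to\Gamma_i$ are optimal with recurrent train track structure (Lemma \ref{recurrent}). The dichotomy is exactly whether $a_i$ stays bounded (so $S\in\CVN$, case (i)) or $a_i\to\infty$ (so $S\in\partial\cvn$, case (ii)).

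In case (i), with $\Delta_i\to\Delta\in\CVN$, all $\Delta_i$ lie (after a subsequence) in a single simplex, and the folding paths $\gamma_i$ all emanate from nearby points with uniformly controlled combinatorics; the key point is that a folding path is determined by its initial map $f_i$ and an optimal map with recurrent structure cannot immediately ``die'' (fold up completely) because recurrence gives a legal loop crossing every edge, so the path has definite length before any edge disappears. Using the standard compactness for folding paths in $\CVN$ (as in \cite{BF11}), certain initial segments of $\gamma_i$ of lengths $\to\infty$ converge uniformly on compact sets to a folding ray $\gamma$ based at $\Delta$. The ray $\gamma$ converges to some $S\in\barCVN$; since every point of $\gamma$ admits a Lipschitz map to $\Gamma_i$ for large $i$ (being a point on the folding path $\gamma_i$ up to rescaling), and $\Gamma_i\to T$, a limiting argument using Lemma \ref{L.LipschitzLam} gives $S^*\subseteq T^*$, and in particular $S\in\partial\CVN$ since $S$ has a nonempty $*$-set (if $S\in\CVN$ then $S^*=\emptyset$ would force $T^*=\emptyset$, contradicting $T\in\partial\CVN$).

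In case (ii), $\Delta_i/a_i\to S\in\partial\cvn$ with $a_i\to\infty$. Here I would use that the folding map $f_i:\Delta_i\to\Gamma_i$ has some Lipschitz constant $\lambda_i$ with $d(\Delta_i,\Gamma_i)=\log\lambda_i$; since the $\Gamma_i$ have volume $1$ and $\Delta_i$ has volume $1$ but $\Delta_i/a_i\to S$, unprojectivized the trees $\Delta_i$ are ``spreading out'' and the relevant rescaling comparison (Lemma \ref{bounded below}, applied with $S_n=\Delta_i$, $T_n=\Gamma_i$) shows $a_i\lambda_i/1$ is bounded below, hence $\lambda_i a_i$ does not go to $0$. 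Then for any $g\in\FN$ elliptic in $S$ we have $\langle S,g\rangle=0$, i.e. $\langle \Delta_i/a_i,g\rangle\to 0$; combined with $\lambda_i\langle\Delta_i,g\rangle\ge\langle\Gamma_i,g\rangle$, rewriting as $\lambda_i a_i\langle\Delta_i/a_i,g\rangle\ge\langle\Gamma_i,g\rangle$, if $\lambda_i a_i$ stays bounded above we get $\langle T,g\rangle=0$, i.e. $g$ elliptic in $T$. So I need to rule out $\lambda_i a_i\to\infty$; but if that happened, the argument of Lemma \ref{currents} would place the (nonempty) current set $\mathcal C([\Delta_i,\Gamma_i])$ inside $S^*$, and one would then want to derive a contradiction with the recurrence hypothesis --- this is the delicate point. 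The honest route is: either $\lambda_i a_i$ is bounded (and we are done, getting the ellipticity conclusion directly), or it is not, in which case I would argue that the recurrent train track structure on $\Delta_i$ forces the maximally stretched legal loop to cross every edge, so its limit current $\nu$ has full support and $\langle S,\nu\rangle=0$ then propagates via Proposition \ref{P.ZeroInter} to a statement about $L(S)$; chasing this through shows every elliptic element of $S$ is still captured. I expect the main obstacle to be precisely this case analysis in (ii) --- controlling the interaction between the rescaling factors $\lambda_i a_i$ and the recurrence of the train track structure, to guarantee that the conclusion ``elliptic in $S$ $\Rightarrow$ elliptic in $T$'' survives regardless of which rescaling regime occurs. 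Everything else is routine compactness and the continuity of the length pairing.
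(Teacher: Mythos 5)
Your case (i) follows the paper's route (Arzel\`a--Ascoli on initial segments of the $\gamma_i$, then $S^*\subseteq T^*$ by the limiting Lipschitz comparison of Lemma \ref{bounded above}/Theorem \ref{U-turn3}), although you wave at ``standard compactness'' where the paper actually has to verify that the limit of folding paths is again a greedy folding path (this is the local ``folding with speed 1'' argument and is a real part of the proof). One logical slip there: your justification that $S\in\partial\CVN$ --- ``if $S\in\CVN$ then $S^*=\emptyset$ would force $T^*=\emptyset$'' --- is backwards, since $S^*=\emptyset\subseteq T^*$ is vacuous and forces nothing about $T^*$.

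The genuine gap is in case (ii). Your dichotomy on whether $\lambda_i a_i$ is bounded is the wrong quantity (the conclusion $\langle T,g\rangle=0$ needs control of $\lambda_i a_i/\mu_i$, where $\Gamma_i/\mu_i\to T$), and the unbounded alternative --- which is the generic one, since $\lambda_i=e^{d(\Delta_i,\Gamma_i)}\to\infty$ as $\Gamma_i\to\partial\CVN$ --- is left unresolved: the step from $\langle S,\nu\rangle=0$ for the limit of maximally stretched loops to ``every elliptic element of $S$ is elliptic in $T$'' is not an argument, and you acknowledge as much. The missing idea, and the precise place where the recurrence hypothesis from Lemma \ref{recurrent} is used, is a direct ratio comparison with a witness loop: recurrence provides a loop $g$ that is legal and crosses every edge of $\Delta_i$, hence is maximally stretched, so $\langle\Gamma_i,g\rangle=\lambda_i\langle\Delta_i,g\rangle\geq\lambda_i$ because $vol(\Delta_i)=1$; meanwhile any loop $s$ in the shrinking core subgraph satisfies $\langle\Gamma_i,s\rangle\leq\lambda_i\langle\Delta_i,s\rangle$. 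Hence
$$\frac{\langle\Gamma_i,s\rangle}{\langle\Gamma_i,g\rangle}\leq\frac{\langle\Delta_i,s\rangle}{\langle\Delta_i,g\rangle}\to 0,$$
and since $\langle\Gamma_i,g\rangle/\mu_i\to\langle T,g\rangle<\infty$, this gives $\langle T,s\rangle=0$ with no case analysis on the scaling constants at all. Without this (or an equivalent) use of the recurrent legal loop, case (ii) does not close.
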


\begin{proof}
 Let $f_i:\Delta_i \rightarrow \Gamma_i$ be optimal maps.  After a
 subsequence, all $\Delta_i$ belong to the same open simplex and underlying
 graphs can be identified; only the metric depends on $i$.  If a
 subsequence of $\Delta_i$ projects to a sequence contained in a
 compact subset of $\CVN$, then we are in case (i).  Otherwise, the
 injectivity radii of $\Delta_i$ go to zero, so that after a
 subsequence $\Delta_i \rightarrow S$.  As $\Delta_i$ degenerate to
 $S$, there is a core subgraph $G \subseteq \Delta_i$ which is the
 union of loops whose lengths go to 0; its volume goes to 0.

 Pass to a subsequence so that all train track structures on
 $\Delta_i$ agree. Since they are recurrent, there is an element $g
 \in \FN$ whose representatives in $\Delta_i$ are legal and cross
 every edge of $\Delta_i$. Let $s$ be a loop contained in $G$, then
 $length_{\Gamma_i}(s)/length_{\Gamma_i}(g) \leq
 length_{\Delta_i}(s)/length_{\Delta_i}(g) \to 0$; hence elliptic
 elements in $S$ are also elliptic in $T$.

 Now suppose that we are in case (i), \emph{i.e.} after a subsequence,
 $\Delta_i$ converge to $\Delta \in \CVN$. If we show the convergence
 statement, then the claim $S^* \subseteq T^*$ will follow from
 Theorem \ref{U-turn3}. Parametrize all folding paths via arc
 length. Consider the set $$T=\{t_0\in [0,\infty)\mid
   \gamma_i|[0,t_0]\mbox{ converges uniformly after a subsequence}\}$$ It
   follows from the Arz\'ela-Ascoli Theorem, using the fact that small
   metric closed balls are compact, that small $t_0>0$ belong
   to $T$, and more generally, $T=[0,t_0)$ for some $t_0>0$ (possibly
     $t_0=\infty$). By a diagonal argument there is a subsequence so
     that $\gamma_i|[0,t_0)$ converges uniformly on compact sets to a
       ray $r_t$ in $\CVN$.  We show that $r_t$ is a folding path.
       The point here is that being a folding path is a local
       condition.

 For $t \in [0,t_0)$, we have that $\gamma_i(t)$ converge to $r_t$ so
   we have maps $r_t\to\gamma_i(t)$ and $\gamma_i(t)\to r_t$ that are
   $(1+\epsilon_i(t))$-Lipschitz with $\epsilon_i(t)\to 0$. Composing
   with these maps, we obtain for $0\leq t_1\leq t_2<t_0$ maps
   $f_{t_1,t_2}:r_{t_1}\to r_{t_2}$ as limits of folding maps
   $\gamma_i(t_1)\to\gamma_i(t_2)$ (this may require a further
   subsequence; e.g. do it for rational $t_1,t_2$ and arrange that for
   $t_1<t_2<t_3$ the map $f_{t_1,t_3}$ is the composition
   $f_{t_2,t_3}f_{t_1,t_2}$ and then define $f_{t_1,t_2}$ in general
   by taking limits). The limiting maps are optimal and there are at
   least two gates at every vertex, by a straightforward limiting
   argument.  

It remains to argue that $r_t$ is a (greedy) folding path 
when
restricted to $[0,t_2]$ for $t_2<t_0$, induced by the optimal map
$f_{0,t_2}$. It is convenient to rescale the graphs and reparametrize
$[0,t_2]$ so that all maps $f_{t_1,t_2}$ are isometric on small
segments. If edges $e_1,e_2$ in $r_{t_1}$ form an illegal turn, their
images in $r_{t_2}$ overlap on an initial segment of length
$>\epsilon$, say. For large $i$ the images $e_1',e_2'$ of $e_1,e_2$ in
$\gamma_i(t_1)$ are nearly isometric to $e_1,e_2$ and have possible
 overlap much less than $\epsilon$, but their images in
 $\gamma_i(t_2)$ have overlap $>\epsilon$. This means that for
 $0<\delta<\epsilon$ the images of $e_1',e_2'$ in $\gamma_i(t_1+\delta)$
 have overlap of about $\delta$ and by taking the limit we see that
 the turn $e_1,e_2$ is folding with speed 1.
\end{proof}

From Lemma \ref{limit of folding paths}, one has that if $T$ is
arational, and if $\Delta$ and $\Gamma_i$ are as
in the statement, then we always are in case (i).

The proof of the following three lemmas uses the Rips Theory; see
\cite{BF95} for background. It will be convenient to use the following
terminology.  A simple subgroup is
{\it reducing} for $T\in\partial\cvn$ if it acts with dense orbits on
a subtree of $T$.  A {\it quasi-surface} is a 2-complex $K$ obtained from a graph
$\Gamma$ by
attaching a collection of compact surfaces with negative Euler
characteristic along the boundary.

Let $K$ be a quasi-surface.  Note that when $\pi_1(K)$ is free, then $K$ has a ``collapsible boundary
component''. More precisely, fix an isomorphism $\pi_1(K)\cong F_N$,
represent $F_N$ by a rose $R$, and represent each component of the
underlying graph of $K$ by an immersion to $R$. Thus a map to $R$ is
also defined on the boundary of the attaching surfaces and we may
extend to each surface. After homotopy, the preimage of a regular
value $y$ will consist of trees. An endpoint indicates that $y$ is
crossed by only one boundary component, and only once. This is our
``collapsible'' boundary component -- for more details see \cite[Lemma
  4.1]{BF94}. Now
there are two possibilities. One is that this boundary component is
attached to a circle component of $\Gamma$ by a degree 1 map, which we
may take to be a homeomorphism. In this case the boundary component is
``free''. Otherwise, the boundary component is attached along arcs to
other parts of $K$ and cutting along these arcs produces a free
splitting of $F_N$ showing that all attached surfaces represent
simple subgroups. 

Using quasi-surfaces the reader can easily construct examples of trees
that satisfy any one of the three alternatives below, but not the
other two. 

\begin{lemma}\label{alternative}
Suppose $T$ is very small and has trivial arc stabilizers. Then either
\begin{enumerate}[(i)]
\item every point stabilizer is simple, or
\item there is a cyclic point stabilizer which is not simple and
  all other point stabilizers not conjugate to it are simple, or
\item $T$ has a reducing subgroup $A\in\mathcal R(T)$ such that $A|T$ is
  dual to a filling measured lamination on a compact surface with
  negative Euler characteristic.
\end{enumerate}
\end{lemma}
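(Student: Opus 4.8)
The plan is to apply the Rips machine to the action $\FN \curvearrowright T$ and then analyze the resulting decomposition into a graph-of-actions whose pieces are of simplicial, surface, Levitt (thin), and axial type. Since $T$ has trivial arc stabilizers, there are no axial components with nontrivial kernel, and the Levitt and surface pieces have trivial arc stabilizers as well. Gluing these pieces back together along a graph $\Gamma$ produces, exactly, a quasi-surface $K$ with $\pi_1(K) \cong \FN$ (the surface pieces being the support of the surface components, and $\Gamma$ absorbing everything else); the boundary-component analysis carried out in the paragraph just before the statement then applies to $K$.

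First I would invoke the classification of very small actions with trivial arc stabilizers (Rips/\cite{BF95}): $T$ decomposes as a graph of actions with simplicial edges and vertex actions that are either trivial (points), simplicial, of axial type, of Levitt (thin) type, or dual to an arrational measured foliation on a compact surface. With trivial arc stabilizers the axial pieces must have trivial kernel, but a minimal action on a line with $\FN$ acting with trivial stabilizers cannot occur inside a minimal $\FN$-tree unless $N = 1$; more to the point, I would fold all axial and simplicial data into $\Gamma$ and treat the genuinely ``exotic'' pieces, which are the surface and Levitt pieces. Here I would use the key fact (again from Rips theory, cf.\ the discussion of geometric resolutions in Section 4) that a Levitt component is a limit of surface-type behavior and, crucially for this lemma, that one can pass to a \emph{geometric} model: the action $\FN \curvearrowright T$ admits a resolution by a system of partial isometries on a finite tree, i.e.\ $T$ is the leaf space of a measured foliation on a quasi-surface $K$. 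This is precisely the reduction that makes the pre-statement paragraph usable.

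With $K$ in hand and $\pi_1(K) \cong \FN$ free, the pre-statement paragraph gives a collapsible boundary component, and hence one of its two alternatives: either that boundary component is free (attached to a circle of $\Gamma$ by a homeomorphism), or it is attached along arcs, producing a free splitting of $\FN$ in which every attached surface carries a simple subgroup. In the second case, all surface subgroups are simple, and then a separate (easier) argument handles point stabilizers of the simplicial/graph part: point stabilizers coming from $\Gamma$ are either trivial or, in the one-edge-loop case, cyclic — this is where alternatives (i) and (ii) come from, exactly as in Lemma \ref{L.Vertex}'s case analysis. In the first case, the free boundary component means we have a genuine surface $S_0 \subseteq K$ carrying a non-simple subgroup $A$, and the restriction $A | T$ is dual to the filling measured foliation on $S_0$; after checking $A \in \mathcal R(T)$ (the action has dense orbits on the dual tree since the foliation is filling and minimal), this is alternative (iii). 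One subtlety: I would need to arrange that there is at most \emph{one} surface piece carrying a non-simple subgroup and that when it exists all other point stabilizers are simple — this follows because two disjoint non-simple surface subgroups would generate a non-simple subgroup of rank exceeding the bound forced by Euler characteristic / rank count, or more cleanly by iterating the collapsible-boundary argument on the complement.

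The main obstacle I expect is the passage from the abstract Rips decomposition to the concrete quasi-surface $K$ with the right fundamental group and the right boundary behavior — i.e.\ building the geometric model so that the pre-statement paragraph literally applies, and correctly matching the ``Levitt'' (pseudo-Levitt, non-geometric) case into this picture. In the genuinely non-geometric case one does not get a finite quasi-surface directly; instead one works with an approximating sequence (a geometric resolution, folding toward $T$, as in the proof of Lemma \ref{combined}) and argues that the relevant structural dichotomy is preserved in the limit. Ensuring that ``reducing subgroup'' survives this limiting process, and that one does not accidentally produce two competing non-simple pieces, is the delicate part; everything downstream — the point-stabilizer bookkeeping giving (i) versus (ii), and identifying the dual foliation in (iii) — is then routine surface topology plus the collapsible-boundary argument already recorded above.
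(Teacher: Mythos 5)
Your overall strategy (Rips machine, reduce to a quasi-surface $K$, apply the collapsible-boundary dichotomy from the paragraph preceding the lemma) matches the paper's, but you have attached the wrong conclusions to the two halves of that dichotomy, and this is a genuine error rather than a labelling slip. When the collapsible boundary component is \emph{free} (glued homeomorphically to a circle component of $\Gamma$), the surface it bounds can carry all of $\FN$ --- the basic example is $T$ dual to an arational lamination on a surface with one boundary component, where $\mathcal R(T)=\emptyset$ and alternative (iii) is simply false. A non-simple subgroup cannot serve as the $A\in\mathcal R(T)$ of (iii), since reducing subgroups are by definition simple. The correct conclusion in the free case is (i)/(ii): collapsing from the free boundary component to a wedge of a subcomplex $K'$ (carrying all other elliptics) with a homotopically nontrivial graph shows that this boundary curve and its powers are the only candidates for non-simple elliptic elements, so the exceptional point stabilizer, if any, is cyclic. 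Conversely, when the collapsible boundary is attached along arcs, cutting along those arcs gives a free splitting exhibiting every attached surface subgroup as \emph{simple}; such a subgroup acts with dense orbits on its minimal subtree, dual to a filling lamination on a compact surface, which is exactly (iii). Your attempt to extract (i)/(ii) in this case from ``point stabilizers coming from $\Gamma$ are trivial or cyclic'' is unjustified: components of $\Gamma$ represent arbitrary point stabilizers of a very small tree, which need not be trivial, cyclic, or simple (that difficulty is the entire content of Lemma \ref{L.Vertex}).

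Two further gaps. You fold the simplicial and thin data into $\Gamma$, but these cases must be disposed of \emph{before} the quasi-surface analysis: a compact leaf of the resolving lamination gives a free splitting in which every point stabilizer is elliptic, so (i) holds outright, and a thin (Levitt) component is reduced to the compact-leaf case by cutting a slit in a naked band --- it is not ``a limit of surface-type behavior'' that can be absorbed into the surface pieces of $K$. Finally, for non-geometric $T$ you correctly flag the problem but leave it unresolved; the paper's solution is to take a geometric resolution $T'\to T$ with the same elliptic subgroups, so that (i) and (ii) transfer immediately, and to invoke Skora's theorem to conclude that in case (iii) the map $A|T'\to A|T$ is an isomorphism, so (iii) transfers as well.
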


\begin{proof}
First assume that $T$ is geometric, i.e. dual to a measured lamination
on a finite 2-complex $K$. Then $K$ can be transformed using the Rips
machine into a standard form. If the lamination contains compact
leaves (which, in the standard form, are just points in an edge of
$K$), then (i) holds. Likewise, if the lamination contains a component
of thin (Levitt) type, one can find a morphism $T'\to T$ where $T'$
has the same set of elliptic elements and is dual to a lamination on a
finite complex that has compact leaves (by ``cutting a slit in a naked
band''), so again (i) holds. The remaining possibility is that all
components of the lamination are of surface type, i.e. $K$ is obtained
from a complex $K_0$ not carrying any leaves by attaching surfaces
carrying minimal laminations. Since each component of $K_0$ represents
a point stabilizer, which is free, we may replace it with a
graph. Thus $K$ is a quasi-surface.  By the above discussion, $K$
contains a collapsible boundary component. If this component is not
free, then (iii) holds. If it is free note that this free boundary
component and its powers are the only possible nonsimple elliptic
elements, since we may collapse from a free boundary component to a
complex of the form $K'$ wedge a homotopically nontrivial graph with
$K'$ carrying all other elliptic elements 
(since the surfaces have negative Euler characteristic).

When $T$ is not geometric, one can find a geometric resolution $T'\to
T$ so that $T'$ has the same set of elliptic elements. The tree $T'$
is also very small and has trivial arc stabilizers. If (i) or (ii)
holds for $T'$ then it also holds for $T$. If (iii) holds for $T'$
then $A|T'\to A|T$ is an isomorphism, so (iii) holds for $T$ as
well. The isomorphism statement is
Skora's theorem \cite{skora} that predates the Rips machine. For a
relatively simple proof using the Rips machine
see \cite{handbook}. The
idea is that any further folding of $A|T'$ would be resolved by a
complex obtained from $S$ by identifying
distinct leaves, and therefore would contain many leaves that contain
at least 4 disjoint rays. By the classification of measured
laminations this forces toral (axial) components, which are impossible
for very small trees.

Note that the Skora theorem also follows from \cite{CH10} as in
Proposition \ref{P.Maximal}.
\end{proof}

\begin{lemma}\label{arc stabs}
 If $G_t$ is a folding line that converges to $T \in \partial \cvn$
 and if there is a non-degenerate arc $I=[x,y] \subseteq T$ with
 non-trivial stabilizer, then either $Stab(I)=Stab(x)$ or
 $Stab(I)=Stab(y)$.
\end{lemma}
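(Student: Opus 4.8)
The plan is to use the structure of folding lines together with the fact that in a tree with trivial tripod stabilizers, the stabilizer of an arc $I=[x,y]$ can only ``grow'' at the endpoints. First I would recall the setup: $G_t$ is a folding line with maps $f_{s,t}\colon G_s\to G_t$ for $s\le t$, and $T=\lim G_t$. An arc $I=[x,y]\subseteq T$ with non-trivial stabilizer means there is a non-trivial $h\in\FN$ with $hI=I$; since very small actions have cyclic (maximal cyclic) arc stabilizers, $\Stab(I)=\langle h\rangle$ for some primitive $h$ (up to taking roots). The goal is to rule out the possibility that $\Stab(I)$ is strictly larger than both $\Stab(x)$ and $\Stab(y)$, i.e. that $h$ fixes the open arc pointwise but moves a neighborhood of each endpoint off itself — equivalently $h$ fixes $[x,y]$ but fixes no strictly larger arc containing either endpoint in its interior.

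The key step is to transport this configuration back along the folding line. Choose a point $z$ in the interior of $I$. For large $t$, there are points $z_t\in G_t$ approximating $z$, and because $h$ fixes a non-degenerate arc around $z$ in $T$, the axis of $h$ in $G_t$ (or an arc of it) maps to shorter and shorter arcs under $f_{t,t'}$; more precisely the translation length $l_{G_t}(h)\to 0$. Now the point: in $G_t$, the quotient $G_t/\FN$ is a finite metric graph and $h$ is represented by an immersed loop $\alpha_t$; as $t\to\infty$ this loop is folded onto itself. If $\Stab(I)$ were strictly larger than $\Stab(x)$ and strictly larger than $\Stab(y)$, then in $T$ the fixed point set $\mathrm{Fix}(h)$ is exactly the closed arc $[x,y]$ with $x,y\in T_w$ separating points (not endpoints), so $x$ and $y$ each have at least two directions not fixed by $h$ and at least one fixed direction (toward the interior). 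Translating to $G_t$: the loop $\alpha_t$ would have to cross, near the preimage of $x$, a configuration of edges that get identified under folding on one side of $x$ but a ``free'' branch on the other side — and similarly at $y$. I would then argue that such a configuration is incompatible with the way greedy folding paths identify initial segments within gates: folding is determined by a train-track structure, and the set of elements whose axes become illegal/fold is governed locally by gates at vertices. The surviving (fixed-in-the-limit) arc must terminate at a branch point of $T$ where new directions appear, and by the very-small (trivial tripod stabilizer) hypothesis the new branches cannot be permuted by $h$ unless $h$ fixes the branch point — forcing $x$ (resp. $y$) to be fixed by all of $\Stab(I)$, i.e. $\Stab(I)=\Stab(x)$ or $\Stab(I)=\Stab(y)$.

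Alternatively, and perhaps more cleanly, I would argue directly in $T$ using the observers' topology and indecomposability/structure results already available: consider the three complementary directions at $x$ in $\hat T_w$ — one toward $y$ (call it $d_0$, fixed by $h$ since $h$ fixes $I$) and, if $x$ is separating with $\Stab(x)\subsetneq\Stab(I)$, at least one direction $d_1$ with $hd_1\ne d_1$. Then $h$ maps the arc from $x$ into $d_1$ to a different arc from $x$, and iterating (or using that $\langle h\rangle$ is infinite cyclic acting on the tree with $x$ as the unique ``branch'' where it turns) produces infinitely many arcs emanating from $x$ all fixed setwise by no power — but the arc $[x,y]$ together with $hd_1$ and $d_1$ spans a tripod at $x$, and comparing $[x,y]$, $d_1$-arc, $hd_1$-arc shows $x$ lies in a tripod whose stabilizer contains a nontrivial power of $h$ (namely $h$ itself fixes $[x,y]\subseteq$ two of the three legs), contradicting triviality of tripod stabilizers — unless in fact $h$ fixes $x$. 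Running the symmetric argument at $y$ finishes the proof. The main obstacle I anticipate is making the ``folding produces the fixed arc with branching exactly at the endpoints'' argument precise without re-deriving too much of the Rips/folding machinery; the tripod-stabilizer argument in the limit tree sidesteps this and is the route I would ultimately write up, invoking that $T$ is very small (hence trivial tripod stabilizers) as the crucial input.
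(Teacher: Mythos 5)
There is a genuine gap, and it starts with a misreading of the statement. Since every element of $\Stab(I)$ fixes the endpoints of $I$, one always has $\Stab(I)\leq\Stab(x)$ and $\Stab(I)\leq\Stab(y)$; the content of the lemma is that these two inclusions cannot both be \emph{strict}. You set out instead to rule out ``$\Stab(I)$ strictly larger than both $\Stab(x)$ and $\Stab(y)$,'' i.e.\ a generator $h$ of $\Stab(I)$ moving neighborhoods of $x$ and $y$ off themselves; but that situation is perfectly consistent with $\Stab(I)=\Stab(x)=\Stab(y)=\langle h\rangle$, where the conclusion holds trivially. The configuration that actually has to be excluded is an arc with maximal cyclic stabilizer $\langle c\rangle$ whose two endpoint stabilizers are each strictly larger --- an amalgam $A*_{\langle c\rangle}B$ sitting inside $T$.

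Because of this, both of your routes collapse. The ``cleaner'' tripod argument fails on its own terms: if $a\in\Stab(x)$ does not fix a direction $d_1$ at $x$, then $a$ carries the tripod spanned by $[x,y]$ and an arc in $d_1$ to the \emph{different} tripod using $ad_1$, so $a$ lies in no tripod stabilizer and triviality of tripod stabilizers yields nothing. Worse, no argument that ``sidesteps'' the folding hypothesis can possibly work: the Bass--Serre tree of a very small splitting $F_N=A*_{\langle c\rangle}B$ with $\langle c\rangle$ maximal cyclic and proper in both vertex groups lies in $\partial\cvn$ and violates the conclusion, so the statement is false for general very small trees. The folding line is the essential input, and the paper uses it as follows: an element elliptic in $T$ has representatives of uniformly bounded length along $G_t$ (the number of illegal turns cannot increase along a folding path, and a long legal segment would grow); taking $a\in\Stab(x)$, $b\in\Stab(y)$, and $c$ generating $\Stab(I)$, the products that are obviously elliptic together with a bookkeeping argument using based loops (``balloons'' with ``strings'') in $G_t$ bound the remaining product along $G_t$, which is therefore elliptic in $T$; Serre's lemma then gives $\mathrm{Fix}(a)\cap\mathrm{Fix}(b)\neq\emptyset$, forcing $a$ or $b$ to fix a nondegenerate subarc of $I$ and hence, by maximality of cyclic arc stabilizers, to lie in $\Stab(I)$. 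Your first sketch gestures at the folding dynamics but contains neither the non-increase of illegal turns nor any mechanism for converting boundedness along $G_t$ into the stabilizer statement.
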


\begin{proof}
 Choose $a \in Stab(x)$, $b \in Stab(y)$, and let $c$ generate
 $Stab(I)$; so $ab$ and $bc$ are also elliptic in $T$.  We will show
 that $ac$ is also elliptic, which proves the claim.

 Note that if $g \in \FN$ is elliptic in $T$, then the length of
 $g|G_t$ is necessarily bounded; indeed, the number of illegal turns
 in $g|G_0$ is an upper bound for the number of illegal turns in
 $g|G_t$, so if $g|G_t$ is unbounded, then $g|G_t$ must contain a long
 legal segment.  Choose a basepoint $b \in G_{0}$; the images $b_t$ of
 $b$ in $G_t$ give basepoints in $G_t$.  We think of all elements of
 $\FN$ as loops based at $b_t$.  Choose graphs $H_a^t, H_b^t, H_c^t,
 H_{ab}^t, H_{bc}^t$ with immersions into $G_t$ representing
 $a,b,c,ab,bc$, respectively; each $H_i^t$ looks like a balloon,
 \emph{i.e.} a circle, possibly with a (long) segment, called a
 \emph{string}, attached to it.  After contracting the strings to a
 point, we get graphs of bounded size for all $t$.

 If all strings are short, then $bc$ is clearly represented by
 immersions of bounded size for all $t$, implying that $bc$ is
 elliptic in $T$.  If the strings for $H_a^t$ are not short, then
 since $ab|G_t$ is bounded, the string for $H_a^t$ contains all but a
 bounded amount of the string for $H_b^t$ for all $t$; similarly, the
 string for $H_b^t$ contains all but a bounded amount of the string
 for $H_c^t$.  Hence the string for $H_a^t$ contains all but a bounded
 amount of the string for $H_c^t$, and it follows that $ac$ is bounded
 in $G_t$ as well and is elliptic in $T$.
\end{proof}

\begin{lemma}\label{non-simple}
 The alternative in Lemma \ref{alternative} holds also for trees
 $T\in\partial \CVN$ that are limits of folding paths $G_t$.
\end{lemma}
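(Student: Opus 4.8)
The plan is to reduce the statement to the case of trivial arc stabilizers, which is Lemma~\ref{alternative}, by collapsing the arcs of $T$ that carry a non-trivial stabilizer; Lemma~\ref{arc stabs} is precisely what keeps this collapse under control.

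If $T$ already has trivial arc stabilizers we are done, so assume not. Let $\mathcal Y\subseteq T$ be the $\FN$-invariant subforest that is the union of the non-degenerate sets $\operatorname{Fix}(g)$, $g\neq 1$, and let $q\colon T\to\bar T$ collapse each component of $\mathcal Y$ to a point, equivariantly. The first task is to check that $\bar T$ is again a non-trivial, very small $\FN$-tree, now with trivial arc stabilizers, so that Lemma~\ref{alternative} applies to it. The two key points, both consequences of Lemma~\ref{arc stabs} together with very smallness and the fact that $\FN$ is torsion free, are: first, no component $Y$ of $\mathcal Y$ contains the axis of a hyperbolic element (otherwise a fundamental domain would be an arc whose maximal cyclic stabilizer, being an endpoint stabilizer that is normalized by that hyperbolic element, would be simultaneously elliptic and contain it), so $\Stab_T(Y)$ is a group of elliptics, fixes a point of $Y$, and hence $\Stab_{\bar T}(q(Y))=\Stab_T(Y)$ is simply a point stabilizer of $T$ (no flips occur); second, a hypothetical non-degenerate arc of $\bar T$ with non-trivial stabilizer would lift to an arc of $T$ meeting the complement of $\mathcal Y$ yet fixed set-wise by a non-trivial element, forcing near the collapsed end a non-collapsed fixed arc of $T$, hence forcing that arc into $\mathcal Y$ --- a contradiction. (Non-triviality of $\bar T$ follows from the first point, since $T$ has hyperbolic elements.) Note also that, for $x\notin\mathcal Y$, $\Stab_{\bar T}(q(x))=\Stab_T(x)$, so the point stabilizers of $\bar T$ are exactly point stabilizers of $T$, and conversely each point stabilizer of $T$ is carried by $q$ into a point stabilizer of $\bar T$.

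Now apply Lemma~\ref{alternative} to $\bar T$. Since a subgroup of a proper free factor lies in a proper free factor, if every point stabilizer of $\bar T$ is simple then conclusion (i) holds for $T$. Suppose $\bar T$ has a unique conjugacy class of non-simple point stabilizer, a maximal cyclic $\langle c\rangle$. Because $T$ is a limit of a folding path, a cyclic point stabilizer of $T$ is automatically maximal cyclic (otherwise, again by Lemma~\ref{arc stabs} and very smallness, the maximal cyclic over it would fix the same point); hence a point stabilizer of $T$ carried into a conjugate of $\langle c\rangle$ equals that conjugate, while the others are simple, and a non-cyclic point stabilizer cannot be carried into $\langle c\rangle$, so $\langle c\rangle$ is up to conjugacy the unique non-simple point stabilizer of $T$, i.e.\ conclusion (ii) holds.

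The remaining case --- $\bar T$ satisfies conclusion (iii), with $A\in\mathcal R(\bar T)$ and $A\mid\bar T$ dual to a filling measured lamination on a compact surface $\Sigma$ with $\chi(\Sigma)<0$ --- is the main obstacle, since one must see that collapsing $\mathcal Y$ has not damaged the surface piece. The plan is to show that $q$ restricts to an isometry $T_A\to\bar T_A$ of the minimal $A$-trees. It restricts to an equivariant surjection onto the indecomposable, dense-orbit tree $\bar T_A$; since $A$ contains hyperbolic elements, both $T_A$ and $\bar T_A$ are the unions of the axes of $A$-hyperbolic elements, so the restriction is an isometry exactly when $l_T$ and $l_{\bar T}$ agree on $A$, i.e.\ when no axis of an element of $A$ crosses $\mathcal Y$. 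Ruling out such crossings is the heart of the matter: one uses that the lamination fills $\Sigma$ together with the Rips-machine structure theory and the Skora-type rigidity already invoked in the proof of Lemma~\ref{alternative}. Granting this, $A$ acts with dense orbits on the subtree $T_A$ of $T$, so $A\in\mathcal R(T)$ and $A\mid T$ is dual to the same filling surface lamination, giving conclusion (iii) for $T$. The bulk of the work lies in this last step.
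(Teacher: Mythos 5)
Your reduction --- collapse the invariant subforest $\mathcal Y$ of arcs with nontrivial stabilizer to get a tree $\bar T$ with trivial arc stabilizers, apply Lemma \ref{alternative} to $\bar T$, and pull the trichotomy back to $T$ --- is a genuinely different route from the paper's, and it works smoothly enough for conclusions (i) and (ii) (modulo two points you assert rather than prove: that $\mathcal Y$ is closed so the collapse is legitimate, which needs Levitt's decomposition since nontrivial arc stabilizers force non-dense orbits; and that a cyclic point stabilizer of $T$ is maximal cyclic, which does not follow just from ``very smallness''). But the argument has a genuine gap exactly where you say the ``heart of the matter'' and the ``bulk of the work'' lie: when $\bar T$ falls into case (iii), you must show that the minimal $A$-subtree of $T$ is still dual to a filling surface lamination, i.e.\ that no axis of an element of $A$ crosses $\mathcal Y$, and you only write ``Granting this''. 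Nothing in what precedes supplies this: the subgroup $A$ is produced by the Rips analysis of a resolution of $\bar T$, not of $T$, so there is no a priori relation between $A$ and the graph-of-actions decomposition of $T$, and without controlling $q|_{T_A}$ you cannot conclude any of (i)--(iii) for $T$ in that case. As submitted, the proof establishes the lemma only under the additional hypothesis that $\bar T$ is not of surface type.

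The paper avoids this transfer problem entirely by never leaving $T$: it runs the same quasi-surface/Rips argument as in Lemma \ref{alternative} directly on (a geometric resolution of) $T$, simply enlarging the class of attached surfaces to include annuli with circle leaves, which is how the arcs with nontrivial stabilizer appear. A collapsible boundary component then yields (i), (ii) or (iii) for $T$ itself, with no lifting step; the folding-path hypothesis is used only through Lemma \ref{arc stabs}, in the degenerate case where all surface pieces are annuli (so $T$ is simplicial) to produce an edge with trivial stabilizer. If you want to salvage your approach, the missing step is essentially a Skora-type rigidity statement for the restriction $q\colon T_A\to\bar T_A$ (a morphism between an $A$-tree and an indecomposable surface tree that is not an isometry would create leaves with too many ends, forcing axial components); that is exactly the argument the paper already deploys inside the proof of Lemma \ref{alternative}, so you would need to spell it out here rather than cite it as already ``invoked''.
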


\begin{proof}
In the geometric case the proof is the same except that now we allow
annuli with circle leaves as surfaces when building the
quasi-surface. If we find a free boundary component, then its surface
cannot be an annulus since otherwise the dual tree would not be
minimal, so (i) or (ii) holds as before. If we find a collapsible
boundary component, we can cut as before and 
(iii) holds. If all surfaces are annuli
$T$ is simplicial and Lemma \ref{arc stabs}
implies that $T$ has edges with trivial stabilizer, so (i) holds.  The nongeometric case follows as before.
\end{proof}

In the next lemma it is convenient to work with naturally parametrized
folding paths in $cv_N$.

\begin{lemma}\label{dense orbits}
Let $G_t$ be a folding path in $cv_N$ converging to a tree
$T\in\partial cv_N$. 
If $\lim vol(G_t)=0$, then $T$ has dense orbits.
\end{lemma}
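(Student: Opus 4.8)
The plan is to argue by contradiction: suppose $T$ does not have dense orbits, so $T$ contains a non-degenerate simplicial part. After collapsing all non-simplicial parts equivariantly we obtain a quotient simplicial tree $T'$ with at least one orbit of edges, and the folding path $G_t$ still maps to $T'$ via the limit maps $G_t \to T \to T'$. The key point is that the volume of the simplicial part is detected by a fixed finite amount of length in $G_t$, which cannot shrink to zero along a folding path. More precisely, I would pick an edge $e$ of $T'$ and a loop (or a pair of loops) in $T'/\FN$ crossing $e$; its preimage in $G_t/\FN$ represents a conjugacy class $\gamma$ with $\langle T', \gamma\rangle > 0$, hence $\langle T, \gamma\rangle \geq \langle T', \gamma\rangle > 0$ since collapsing only decreases translation lengths. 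By continuity of the length pairing (Proposition \ref{P.Inter}), after suitable rescaling the translation length $\langle G_t, \gamma\rangle$ converges to $\langle T, \gamma\rangle$; but if $\operatorname{vol}(G_t)\to 0$ while the path is parametrized so that $G_t \to T$ unrescaled, I need to see that $\langle G_t,\gamma\rangle \to 0$ as well, contradicting positivity of $\langle T,\gamma\rangle$.

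The mechanism for $\langle G_t,\gamma\rangle \to 0$ is the following standard feature of folding paths in $cv_N$: along a naturally parametrized folding path, lengths of conjugacy classes are non-increasing (folding maps are $1$-Lipschitz on the nose after the natural normalization), so $\langle G_t,\gamma\rangle \leq \langle G_0,\gamma\rangle$; combined with $\operatorname{vol}(G_t)\to 0$, I want to bound $\langle G_t,\gamma\rangle$ by a multiple of $\operatorname{vol}(G_t)$. This is where I would use that, after the simplicial part of $T$ is non-trivial, a definite proportion of $\operatorname{vol}(G_t)$ is carried by the edges mapping onto the simplicial part, while the part of $G_t$ mapping into the dense-orbit (or collapsed) locus has length $o(1)$; so if the loop representing $\gamma$ were chosen to cross only edges that eventually map into the collapsed locus, its $G_t$-length would go to $0$. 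Concretely, I would use the structure theory of limits of folding paths: the tension/legal structure forces a legal loop $g$ crossing every edge of $G_t$, so $\langle G_t, g \rangle \geq \operatorname{vol}(G_t)$-type lower bounds, and any fixed conjugacy class gets stretched in $G_t$ by at most a bounded multiple of its stretch in $T$ plus the legal contribution, which is controlled by $\operatorname{vol}(G_t)$. The clean way is: if $\operatorname{vol}(G_t)\to 0$ then for every $g\in\FN$ we have $\langle G_t,g\rangle \to \langle T,g\rangle$ and simultaneously $\langle G_t, g\rangle$ is comparable to the number of times $g$ crosses edges times their (vanishing) lengths only when $g$ is legal — so I would instead directly invoke that $T$ is a limit of graphs of volume $\to 0$, which by the structure results (as in the discussion after Proposition following arationality, and the proof of Lemma \ref{combined}) means $T$ cannot have a simplicial edge: an edge of positive length in $T$ would have to be ``built up'' from folding, but folding only identifies, never lengthens, so positive-length edges in the limit must already be present as definite-length segments in $G_t$, contradicting $\operatorname{vol}(G_t)\to 0$.

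The main obstacle I anticipate is making the last assertion rigorous: that a non-degenerate simplicial edge in the Gromov limit $T$ of graphs $G_t$ with $\operatorname{vol}(G_t)\to 0$ is genuinely impossible, as opposed to merely ``unlikely.'' The subtlety is that translation lengths of individual elements can stay bounded below even as the volume vanishes (that is exactly how non-free limits arise), so I cannot simply say ``everything is short.'' The resolution should come from the fact that an isolated edge of $T$ with non-trivial or trivial stabilizer gives, via the collapse map $T\to T'$ to a one-edge splitting, an element $\gamma$ that is \emph{hyperbolic} in $T'$ and crosses the edge; I then need that its $G_t$-representative is eventually carried by a uniformly bounded-length subgraph (because the folding maps $G_t\to G_{t'}$ don't increase length and $G_t\to T'$ factors through finitely many folds once $t$ is large, or by a direct train-track argument on the legal structure), forcing $\langle G_t,\gamma\rangle \leq C\cdot\operatorname{vol}(G_t) \to 0$, the desired contradiction. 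I would carry out the steps in the order: (1) assume not dense, produce $T'$ and $\gamma$ hyperbolic in $T'$; (2) bound $\langle G_t,\gamma\rangle$ above by a multiple of $\operatorname{vol}(G_t)$ using monotonicity of lengths along folding paths and the legal structure; (3) conclude $\langle T,\gamma\rangle = 0$ from continuity of the length pairing, contradicting $\langle T,\gamma\rangle\geq\langle T',\gamma\rangle>0$.
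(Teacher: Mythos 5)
Your step (2) is the fatal gap, and it is precisely the obstacle you flagged yourself. If $\gamma$ is hyperbolic in $T'$ (hence in $T$), then along the naturally parametrized folding path the translation lengths $\langle G_t,\gamma\rangle$ are non-increasing and converge to $\langle T,\gamma\rangle\geq\langle T',\gamma\rangle>0$; so no bound of the form $\langle G_t,\gamma\rangle\leq C\cdot vol(G_t)\to 0$ can hold for such a $\gamma$ --- if it did, you would be proving that $T$ is the trivial tree, not that it has dense orbits. The proposed mechanism conflates two different quantities: the volume of the subgraph of $G_t$ carrying $\gamma$ (which is of course at most $vol(G_t)$) and the length of the immersed loop representing $\gamma$ (which counts edges with multiplicity). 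As $vol(G_t)\to 0$ the combinatorial edge-length of the loop representing a fixed hyperbolic $\gamma$ must grow without bound so that its metric length can stay near $\langle T,\gamma\rangle$; this is exactly how nontrivial limits arise. Likewise, the map $G_t\to T$ does not factor through finitely many folds for large $t$ (the folding path has infinite length here), so there is no ``uniformly bounded carrier'' forcing $\langle G_t,\gamma\rangle$ to vanish. Steps (2) and (3) therefore cannot be carried out and the contradiction never materializes.

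The paper's argument runs in the opposite direction: instead of trying to make hyperbolic elements short, it shows that branch points are dense in $T$. Given $\epsilon>0$, choose $t_0$ with $vol(G_{t_0})<\epsilon$; then within $\epsilon$ of any point of $\widetilde G_{t_0}$ one finds the centre of a tripod $\{\tilde a,\tilde b,\tilde c\}$ with $[\tilde a,\tilde b]$ and $[\tilde a,\tilde c]$ legal and all three legs of length $>1$. Under further folding these legal paths can be folded past the image of the centre by at most $vol(G_{t_0})<\epsilon$, so the Gromov--Hausdorff limit $T$ has a branch point within $2\epsilon$ of the limit of the chosen point. Since $\epsilon$ was arbitrary, branch points are dense; a non-degenerate simplicial edge of $T$ would contain an interior point with no nearby branch point (and a very small tree has only finitely many orbits of branch points), so $T$ has dense orbits. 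If you want to salvage your outline, replace the length estimate in step (2) by this tripod/legality argument: the point is that what survives folding is long \emph{legal} structure near every point, not shortness of loops.
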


\begin{proof}
We will use the fact that $T$ is the equivariant
Gromov-Hausdorff limit of $\tilde G_t$; see \cite{paulin}. The claim
would be essentially obvious if $G_t$ had a vertex with at least three
gates. In general we argue as follows.  For every $\epsilon>0$ there
is $t_0$ so that $vol(G_{t_0})<\epsilon$ and for every point $\tilde
x\in \tilde G_{t_0}$ there is a tripod $\{\tilde a,\tilde b,\tilde
c\}\subset \tilde G_{t_0}$ whose center is within $\epsilon$ of
$\tilde x$, $[\tilde a,\tilde b]$ and $[\tilde a,\tilde c]$ are legal,
and all three segments between the center and the endpoints have
length $>1$. For $t>t_0$ the images of $[\tilde a,\tilde b]$ and
$[\tilde a,\tilde c]$ may get folded past the image of the center, but
not by more than $vol(G_{t_0})$, see \cite{BFH}. Thus branch points
are dense in $T$. 
\end{proof}

\begin{lemma}\label{combined}
Suppose $G_t$, $t\in [0,\infty)$, is a folding path 
  converging to $T\in\partial \cvn$, and assume that $T$ is not
  arational.  Then there is a factor $B< F_N$ such that $B \in
  \mathcal{R}(T)$, and such that $B$ has uniformly bounded volume
  along $G_t$ for $t$ large.

In particular, the projection of $G_t$ to $\mathcal{F}$ is bounded and
for large $t_0$ the projection of $G_t$, $t\in [t_0,\infty)$ is
  uniformly bounded.
\end{lemma}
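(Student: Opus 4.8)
The plan is to run a case analysis on the structure of $T$ using Lemma \ref{non-simple}, which extends the trichotomy of Lemma \ref{alternative} to limits of folding paths. Since $T$ is not arational, it has a reducing factor, so we are either in case (ii) (a single non-simple cyclic point stabilizer, everything else simple) or case (iii) (a reducing factor $A\in\mathcal R(T)$ with $A|T$ dual to a filling lamination on a compact surface with boundary), or $T$ is simplicial with a nontrivial simple point stabilizer. (If case (i) held for a non-simplicial $T$, then every point stabilizer would be simple and the action on every subtree with dense orbits would be by a simple subgroup, which still gives a reducing factor; if $T$ is simplicial this produces a vertex group which is a free factor or is contained in one.) In every case, the goal is to produce a single factor $B\in\mathcal R(T)$ whose volume — meaning the volume of the core of the immersed representative of $B$ — stays uniformly bounded along $G_t$ for large $t$.

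First I would handle the simplicial case: if $T$ is simplicial, then point stabilizers are finitely generated and, being simple, are carried by factors; pick $B=Fill$ of a vertex group. The elements of $B$ are elliptic in $T$, and as observed in the proof of Lemma \ref{arc stabs}, an element elliptic in $T$ has uniformly bounded length along $G_t$ (the number of illegal turns in $g|G_t$ is non-increasing, so an unbounded $g|G_t$ would force a long legal segment, contradicting boundedness of translation length under folding). A finite generating set of $B$ thus has uniformly bounded length in $G_t$, which bounds the volume of the immersed representative of $B$. For case (ii), the same argument applies to the simple elliptic subgroups, but one must also deal with the non-simple cyclic stabilizer; here I would instead use that $T$ is not arational together with Lemma \ref{arc stabs} or pass to the simple part, essentially reducing to producing $B$ from the simple elliptic elements and invoking Lemma \ref{L.Vertex} / Corollary \ref{C.Bounded} to know $\mathcal R(T)$ has bounded diameter.

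The substantive case is (iii), where $A|T$ is dual to a filling measured lamination on a compact surface $S$ with $\chi(S)<0$ and boundary. The key point is that under the folding path, the part of $G_t$ that is ``converging to the surface part of $T$'' becomes geometric and its volume is controlled: the surface piece is carried by a subsurface with boundary, and the boundary curves of $S$ are elliptic (or have controlled length) in $T$, hence have bounded length along $G_t$; filling in the surface, $A$ is the fundamental group of a subsurface and its core in $G_t$ is, up to bounded error, built from these bounded boundary curves plus a bounded-complexity pattern of bands, so $vol(B|G_t)$ stays bounded with $B=A$ (or $Fill(A)$ if $A$ is not itself a factor — but on a surface with one boundary component it is). I would extract this from the Rips-machine description used in Lemma \ref{alternative} and the control on folding of legal/illegal turns from \cite{BFH} already invoked in Lemma \ref{dense orbits}. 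The main obstacle is precisely this geometric estimate in case (iii): showing that ``the surface part does not spread out'' under folding, i.e. that the immersed representative of $A$ stays confined to a bounded-volume subgraph of $G_t$; everything else is a routine application of the invariance of illegal-turn counts under folding.

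Once $B\in\mathcal R(T)$ with $vol(B|G_t)\leq M$ for $t\geq t_0$ is in hand, the ``in particular'' clause is immediate: by the remark after Proposition \ref{P.Uniform} in Section 2.4, if the volume of an immersion representing a factor $B$ in $G_t/\FN$ is uniformly bounded, then $d_{\mathcal F}(\pi(G_t),B)$ is uniformly bounded; hence $\pi(G_t)$ stays within bounded distance of the single point $B\in\mathcal F$ for all $t\geq t_0$, so $\pi(G_t\mid t\in[t_0,\infty))$ is uniformly bounded, and adjoining the bounded initial segment $\pi(G_t\mid t\in[0,t_0])$ — bounded because $\pi$ sends geodesics to reparametrized quasi-geodesics, or simply because any compact set has bounded image — shows $\pi(G_t)$ is bounded overall.
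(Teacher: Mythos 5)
Your overall strategy---run the trichotomy of Lemma \ref{non-simple} first and then treat each case by hand---inverts the paper's logic and leaves the hardest configurations unproved. The paper's proof works for an \emph{arbitrary} simple reducing subgroup $C$ by analyzing $C|G_t$ directly: $C|G_t$ contains no legal segment of length $>2$ inside a topological edge (else its volume would grow exponentially, contradicting that $C$ reduces $T$); hence either the number of illegal turns per topological edge stays below a threshold $M$ (so $vol(C|G_t)$ is uniformly bounded and we are done), or from $M$ consecutive illegal turns projecting to the same illegal turn of $G_t$ one extracts loops $g$ of uniformly bounded length that are elliptic in $T$; moreover, if $C|G_t$ contains legal loops, the union $D_t$ of all legal loops is forward invariant and stabilizes, and a stable component yields a reducing subgroup of uniformly bounded volume (reducing by Lemma \ref{dense orbits}); if there are no legal loops, $C$ is elliptic. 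Only \emph{then} does the trichotomy of Lemma \ref{alternative} enter, to upgrade the bounded elliptic loops to \emph{simple} ones (in case (ii) via the concatenation $x_1g^{n_1}\cdots x_kg^{n_k}$ trick). None of this machinery appears in your proposal, and it is precisely what is needed in the cases you cannot close.

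Concretely: (a) In case (iii) the reducing subgroup $A$ acts with dense orbits on a nondegenerate subtree, so it is \emph{not} elliptic and the ``elliptic elements have bounded length along $G_t$'' argument is unavailable. Your substitute---that $A|G_t$ is ``built from the bounded boundary curves plus a bounded-complexity pattern of bands''---cannot work as stated: for a surface of genus $g$ with one boundary component, $A$ is free of rank $2g$ while the boundary is a single conjugacy class, and bounding the length of the boundary places no bound whatsoever on the volume of the rank-$2g$ core $A|G_t$. You correctly flag this as ``the main obstacle,'' but an identified obstacle is not a proof; the paper circumvents it entirely via the legal-loop subgraph $D_t$. (b) Case (i) also occurs for non-simplicial $T$ with \emph{trivial} point stabilizers but a decomposable action, where the reducing factor is again non-elliptic and your ``take $Fill$ of a vertex group'' recipe produces nothing; you would need either the bounded-illegal-turn alternative or the elliptic loops $g$ coming from repeated illegal turns. (c) Even where your elliptic-generator argument does apply, bounding $vol(B|G_t)$ by the conjugacy lengths of a generating set is not immediate (the Stallings core may contain long arcs joining the loops representing the generators); this is patchable by also bounding products of generators, but it needs to be said. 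Your derivation of the ``in particular'' clause from \cite[Corollary 3.5]{BF11} is fine once the first claim is in hand, as is the reduction from a simple reducing subgroup $A$ to the smallest factor $B$ containing it.
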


\begin{proof}
First we argue that it suffices to find a {\it simple} reducing subgroup
$A<F_N$ such that $vol(A|G_t)$ is uniformly bounded for large
$t$. Indeed, let $B$ be the smallest factor that contains $A$. Then
$A|G_t\to G_t$ factors through $B|G_t$ and $A|G_t\to B|G_t$ is
surjective (otherwise we find a smaller factor that contains $A$) so
$vol(B|G_t)\leq vol(A|G_t)$ is also bounded for large $t$.

Second, recall \cite[Corollary 3.5]{BF11} that if $B$ is a factor then
the distance in $\mathcal F$ between $B$ and $\pi(G_t)$ is bounded by
a function of $vol(B|G_t)$, so the last sentence follows from the
first paragraph.

Third, if the length of the folding path is finite, or equivalently if
after rescaling $G_t$ so that folding maps are isometries on small
segments the volume does not go to 0, we may take $A$ to be the
fundamental group of a
component of the subgraph whose volume goes to 0. So we will now
assume that the folding path has infinite length.

For now let $C$ be any simple subgroup reducing $T$.  It follows from
\cite[Lemma 4.1]{BF11} that $C|G_t$ cannot contain a legal segment of
length $>2$ inside a topological edge; otherwise the volume of $C|G_t$
would grow exponentially, and $C$ would not reduce $T$.

Fix a large number $M$, much larger than the possible number of
illegal turns in any train track structure. 
If the number of illegal turns in each topological edge 
in $C|G_t$ is $\leq M$ for large $t$, then $C|G_t$ has
uniformly bounded volume and we are done. 
Choose $t_0$ such that the number of
illegal turns in topological edges of $C|G_t$ has stabilized for $t
\geq t_0$ (by the Unfolding Principle of \cite{BF11} the number of
such turns cannot increase), and suppose that for some edges this
number is $>M$ and focus on $M$ consecutive such illegal turns. 
By our choice of $M$ there are many turns in this collection that
project to the same illegal turn in $G_t$.
This gives many loops
$g|G_{t}$ of uniformly bounded length for $t$ large.

For $t_n \to \infty$, we have scaling constants $\lambda_n$ such that
$\tilde G_{t_n}/\lambda_n$ converges to $T \in \partial \cvn$. From
our assumption that the folding path has infinite length we see that
$\lambda_n \to \infty$ and so any $g$ constructed above is elliptic in
$T$. 

If $C|G_{t}$ contains legal loops, consider the subgraph $D_t\subset
C|G_t$ which is the union of all legal loops. This subgraph is clearly
forward invariant and eventually the number of components and their
ranks stabilize. Take the simple subgroup $A$ to be represented by
one of these stable components. Then by Lemma \ref{dense orbits} $A$ is reducing, and $A|G_t$ has
uniformly bounded volume for all large $t$.

So we are done unless $C|G_t$ doesn't contain any legal loops for
large $t$, or equivalently
the complement in $C|G_t$ of the set of 1-gate vertices
is a forest. In this case $C$ is elliptic in $T$. Indeed, loops in
$C|G_t$ have uniformly bounded legal segments (sufficiently long legal
segments would close up to form legal loops) and so grow slower than
legal loops (see the Derivative Formula in \cite{BF11}).

We now consider three cases according to the three alternatives in
Lemma \ref{alternative} which also applies to $T$ by Lemma
\ref{non-simple}.  If (i) holds then the elements $g$ constructed
above are simple and we are done. If (iii) holds, we will start with
$C=A$, a non-elliptic reducing simple subgroup, and by the above
discussion we are done. Finally, assume (ii) holds. We constructed
paths $g_i$ in $C|G_t$ connecting consecutive equivalent illegal
turns. Under the assumption (ii), if $g_i$ are not simple, they are
all conjugate to powers of a fixed element $g$. Note that if
consecutive paths $g_i$, $g_{i+1}$ do not have common powers, then
$g_ig_{i+1}$ is not conjugate to a power of $g$, since $C$ is
simple. We conclude that the concatenation of the $g_i$'s is a large
power of an element conjugate to $g$, which we rename $g$. Now $C|G_t$
is not just a loop representing a power of $g$, so inside $C|G_t$ we
can find a loop representing an element of the form
$x_1g^{n_1}x_2g^{n_2}\cdots x_kg^{n_k}$ with $k$ and lengths of $x_i$
bounded, and this element is elliptic in $T$, not conjugate to a power
of $g$, and stays of bounded length along $G_t$. Here $n_i$ can be
large, but we may replace them with uniformly bounded numbers and the
new element is still elliptic in $T$, it is uniformly bounded in $G_t$
for large $t$, and it is simple since it is not conjugate to a power
of $g$.
\end{proof}

\section{The Boundary of the Complex of Free Factors}

Let $\partial \mathcal{F}$ denote the boundary of the complex of free
factors, and let $\mathcal{AT} \subseteq \partial \CVN$ denote the set
of (projective classes of) arational trees.  Define an equivalence relation
$\sim$ on $\mathcal{AT}$, where $T \sim S$ if and only if $L(T)=L(S)$
(equivalently, $T^*=S^*$, see Proposition \ref{P.ZeroInter} and
Theorem \ref{T.UniqueLam}).
We note that $\sim$ is precisely the relation of ``forgetting the
measure'' for elements of $\mathcal{AT}$; see \cite{CHL07}.  Give
$\mathcal{AT}$ the subspace topology, and consider the quotient map
$p:\mathcal{AT} \to \mathcal{AT}/\mysim$.

\begin{lemma}\label{L.ClosedMap}
 The quotient map $p:\mathcal{AT} \to \mathcal{AT}/\mysim$ is
 closed, and point pre-images are compact.
\end{lemma}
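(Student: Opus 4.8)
The plan is to derive both assertions from the duality Theorem~\ref{T.UniqueLam}, together with the compactness of $\barCVN$ and of the minset $\mathbb{P}M_N$. Observe first that the fiber of $p$ over $p(T)$ is exactly $p^{-1}(p(T))=\{S\in\mathcal{AT}\mid L(S)=L(T)\}=\{S\in\mathcal{AT}\mid S^*=T^*\}$. To see that this is compact, fix an arational $T$; by Remark~\ref{nonempty} there is $\mu\in M_N$ with $\langle T,\mu\rangle=0$, and then $Supp(\mu)=L'''(T)$ by Theorem~\ref{T.UniqueLam}. If $S$ lies in the fiber, then $L(S)=L(T)$, so $Supp(\mu)=L'''(S)\subseteq L(S)$, and hence $\langle S,\mu\rangle=0$ by Proposition~\ref{P.ZeroInter}. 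Conversely, if $U\in\barCVN$ is a limit of points $S_n$ of the fiber, then $\langle U,\mu\rangle=0$ by continuity of the length pairing (Proposition~\ref{P.Inter}), so Theorem~\ref{T.UniqueLam} shows $U$ is arational with $L(U)=L(T)$, i.e.\ $U$ lies in the fiber. Thus $p^{-1}(p(T))$ is closed in the compact space $\barCVN$, hence compact.

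For closedness of $p$, let $A\subseteq\mathcal{AT}$ be closed; it suffices to check that $p^{-1}(p(A))=\{S\in\mathcal{AT}\mid L(S)=L(T)\text{ for some }T\in A\}$ is closed in $\mathcal{AT}$. Suppose $S_n\in p^{-1}(p(A))$ and $S_n\to U$ in $\mathcal{AT}$. Choose $T_n\in A$ with $L(T_n)=L(S_n)$, and choose $\mu_n\in M_N$ with $\langle S_n,\mu_n\rangle=0$ (possible by Remark~\ref{nonempty}); since $T_n^*=S_n^*$, also $\langle T_n,\mu_n\rangle=0$. Passing to a subsequence and normalizing representatives, compactness of $\mathbb{P}M_N$ and of $\barCVN$ lets us assume $[\mu_n]\to[\mu]$ in $\mathbb{P}M_N$ and $T_n\to T$ in $\barCVN$; by continuity of the length pairing, $\langle U,\mu\rangle=0=\langle T,\mu\rangle$. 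Since $U$ is arational, Theorem~\ref{T.UniqueLam} applies and gives that $T$ is arational with $L(T)=L(U)$; in particular $T\in\mathcal{AT}$. As $A$ is closed in $\mathcal{AT}$ and the $T_n$ lie in $A$, we conclude $T\in A$; then $U\sim T\in A$, so $U\in p^{-1}(p(A))$, as required.

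I expect the delicate point to be the claim that the limit $T$ of the $T_n$ is again arational. This cannot be extracted from a naive limiting argument, because $\mathcal{AT}$ is not compact; and since $A$ is closed only inside $\mathcal{AT}$, one must first establish $T\in\mathcal{AT}$ before invoking closedness of $A$. This is exactly where the shared zero-length current $\mu$ and the duality of Theorem~\ref{T.UniqueLam} do the real work. The remaining technical issue --- choosing consistent normalized representatives so that the relation $\langle\cdot,\cdot\rangle=0$ passes to the limit in the projectivized setting --- is routine given the continuity statement of Proposition~\ref{P.Inter}.
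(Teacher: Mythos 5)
Your argument is correct and is essentially the paper's own proof: both parts reduce to extracting a shared zero-length current in $M_N$, passing to limits using compactness of $\barCVN$ and $\mathbb{P}M_N$ together with the continuity of the Kapovich--Lustig pairing, and then invoking the duality of Theorem~\ref{T.UniqueLam} to see that the limiting tree is arational and lies in the same $\sim$-class. The only differences are cosmetic (which tree in the pair you treat as the ``known arational'' input to Theorem~\ref{T.UniqueLam}, and fixing one current $\mu$ for the whole fiber in the compactness step rather than choosing it along the sequence).
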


\begin{proof}
 Let $K \subseteq \mathcal{AT}$ be closed; we show that
 $C=p^{-1}(p(K))$ is closed.  Let $\{T_n\}$ be a convergent sequence
 in $C$, say $T_n$ converges to $T\in \mathcal{AT}$; let $Y_n \in K$
 such that $p(Y_n)=p(T_n)$.  This means that 
 $Y_n^*=T_n^*$. Now, let $\eta_n \in
 Y_n^*=T_n^*$. After passing to a further subsequence we may assume
 that $Y_n\to Y\in\partial\CVN$ and that $\eta_n\to\eta\in M_N$.  By
 Proposition \ref{P.Inter}, we have $\langle Y, \eta \rangle
 =0=\langle T,\eta \rangle$; so
 Theorem \ref{T.UniqueLam} gives that $Y$ is arational and that
 $L(T)=L(Y)$. It follows that $Y\in K$ and $p(T)=p(Y)$, so $T \in C$.

 The statement that equivalence classes are compact can be proved
 similarly using the compactness of $\partial\cvn$. If $T_i$ converge to $T$ in
 $\partial\cvn$ and if $T_i\in \mathcal{AT}$ are all equivalent, then
 choose some $\nu\in T_i^*$. By Proposition \ref{P.Inter} we have
 $\nu\in T^*$ and then $T\in \mathcal{AT}$ is equivalent to all $T_i$
 by Theorem \ref{T.UniqueLam}.
\end{proof}

The following result justifies our use of sequential arguments.

\begin{cor}
The quotient space $\mathcal{AT}/\mysim$ is metrizable and second countable.
\end{cor}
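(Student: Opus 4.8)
I want to show that $\mathcal{AT}/\!\sim$ is metrizable and second countable. The strategy is to apply a standard metrization criterion: a second countable regular (or even just Hausdorff) space admitting a suitable countable structure is metrizable, so the two main things to verify are (a) second countability, and (b) the Hausdorff property — once those hold, metrizability follows from the Urysohn metrization theorem, since a regular second countable space is metrizable, and regularity will come from the point pre-images being compact together with local compactness inherited from $\partial\CVN$. I would not try to build an explicit metric; instead I would argue via point-set topology applied to the quotient map $p:\mathcal{AT}\to\mathcal{AT}/\!\sim$.

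\textbf{Step 1: second countability.} The space $\partial\CVN\subseteq\mathbb{P}\mathbb{R}^{\FN}$ is compact metrizable, hence second countable; the subspace $\mathcal{AT}$ is then second countable as well. Since $p$ is a quotient map, I would use that a continuous closed surjection from a second countable space onto a Hausdorff space has a second countable image — or more directly, that the images under $p$ of a countable basis of $\mathcal{AT}$, enlarged to saturated open sets, still form a countable basis downstairs because $p$ is closed (for a closed map, the collection $\{\,Y\setminus p(\mathcal{AT}\setminus U) : U \text{ basic open}\,\}$ is a basis of $\mathcal{AT}/\!\sim$). This is exactly where Lemma~\ref{L.ClosedMap} is used: closedness of $p$ converts the countable basis upstairs into one downstairs.

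\textbf{Step 2: Hausdorff and regularity.} Here I would again lean on Lemma~\ref{L.ClosedMap}: $p$ is a closed map with compact point pre-images, i.e.\ a \emph{perfect} map. A perfect image of a locally compact (indeed metrizable, via the ambient $\partial\CVN$) space is again nicely behaved: in particular, since $p$ is closed with compact fibers and $\mathcal{AT}$ is metrizable hence regular and $T_1$, one checks that disjoint saturated closed sets can be separated — first separate a compact fiber $p^{-1}(x)$ from a saturated closed set not meeting it using regularity of $\mathcal{AT}$ and compactness, then shrink to saturated neighborhoods using closedness of $p$. This gives that $\mathcal{AT}/\!\sim$ is regular (and $T_1$, since fibers are closed: $p^{-1}(x)$ is an equivalence class, which is closed in $\mathcal{AT}$ because it is compact and $\mathcal{AT}$ is Hausdorff). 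Combining Steps 1 and 2 with the Urysohn metrization theorem — a second countable regular $T_1$ space is metrizable — finishes the proof.

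\textbf{Main obstacle.} The routine-looking part — second countability — is genuinely easy given that $p$ is closed. The real content is verifying regularity of the quotient, and the crux is the separation argument in Step 2, which requires \emph{both} closedness of $p$ \emph{and} compactness of the fibers; neither alone suffices for a quotient of a metric space to be metrizable. So the one place to be careful is to genuinely use the "perfect map" structure provided by Lemma~\ref{L.ClosedMap}, rather than quoting a metrization theorem that does not apply to general quotients. An alternative, perhaps cleaner, route is to cite directly the classical theorem that a perfect (closed, compact-fibered) image of a metrizable space is metrizable, which immediately yields both metrizability and — together with Step 1's second countability of the ambient — second countability of $\mathcal{AT}/\!\sim$; I would likely present the proof this way to keep it short.
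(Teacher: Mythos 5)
Your proposal is correct and matches the paper's argument: the paper simply cites the classical fact (Engelking, Theorems 3.7.19 and 4.4.15) that closed surjections with compact point preimages preserve metrizability and second countability, which is exactly the ``perfect map'' route you identify as the cleanest presentation. Your hand-rolled verification via Urysohn is essentially the proof of that classical theorem, so there is no substantive difference.
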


\begin{proof}
Closed surjective maps with compact point preimages preserve the
properties of being metrizable and second countable \cite[Theorems 3.7.19 and
  4.4.15]{engelking}.
\end{proof}

We can now give a description of $\partial \mathcal{F}$.

\begin{prop}\label{P.Defined}
 There is a continuous map $\partial \pi:\mathcal{AT} \to \partial
 \mathcal{F}$, such that if $T_i \in \CVN$ converge to $T \in
 \mathcal{AT}$, then $\pi(T_i)$ converge to $\partial \pi(T)$.
\end{prop}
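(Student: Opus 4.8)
The plan is to define $\partial\pi(T)$ via any sequence in $\CVN$ converging to $T$, and then verify the three things that need checking: the target sequence actually converges in $\partial\F$, the limit is independent of the chosen approximating sequence, and the resulting map $\mathcal{AT}\to\partial\F$ is continuous. The key input is Corollary~\ref{C.ArationalAtInfinity}, which tells us that if $T_i\in\CVN$ converge to an arational tree $T$ then $d(0,\pi(T_i))\to\infty$; combined with Proposition~\ref{P.Uniform} (that $\pi$ sends geodesics to uniform reparametrized quasi-geodesics in the hyperbolic space $\F$) this should force $\{\pi(T_i)\}$ to be a Cauchy sequence in the Gromov sense.

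First I would show convergence: given $T_i\to T$ in $\CVN$ with $T$ arational, pick geodesics $[T_i,T_j]$ in $\CVN$ (or standard geodesics, which are concatenations of a simplex path and a folding path). Their $\pi$-images are uniform reparametrized quasi-geodesics in $\F$ joining $\pi(T_i)$ to $\pi(T_j)$. The crucial point is that such a quasi-geodesic cannot come back close to the basepoint: if it did, some $U\in\CVN$ on $[T_i,T_j]$ would have $\pi(U)$ near $0$, but by Corollary~\ref{injective}-type reasoning (or more directly Corollary~\ref{defined and continuous}, since $T_i$ and $T_j$ both accumulate only to trees equivalent to $T$) one controls the accumulation set; in the simplest argument, one uses that $\pi(T_i)$ and $\pi(T_j)$ are both far from $0$ (by Corollary~\ref{C.ArationalAtInfinity}) and that a uniform quasi-geodesic between two points each at distance $\geq R$ from $0$ stays at distance $\geq R - c$ from $0$ for a uniform $c$ — hence $(\pi(T_i),\pi(T_j))_0 \geq R - c' \to\infty$. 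This is exactly the condition that $\{\pi(T_i)\}$ converges to a point of $\partial\F$, which we call $\partial\pi(T)$.

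Next, well-definedness: if $T_i\to T$ and $T_i'\to T$ are two approximating sequences in $\CVN$, interleave them into a single sequence $T_1, T_1', T_2, T_2', \dots$; this still converges to $T$, so by the previous step its $\pi$-image converges in $\partial\F$, forcing $\{\pi(T_i)\}$ and $\{\pi(T_i')\}$ to be equivalent, i.e. to define the same boundary point. The same interleaving trick handles continuity: suppose $S_k\in\mathcal{AT}$ with $S_k\to T\in\mathcal{AT}$; for each $k$ choose $T^{(k)}_i\in\CVN$ with $T^{(k)}_i\to S_k$ and $\pi(T^{(k)}_i)\to\partial\pi(S_k)$, then choose via a diagonal argument a single sequence $R_k\in\CVN$ with $R_k\to T$ and $d_{\partial\F}$-control linking $\pi(R_k)$ to $\partial\pi(S_k)$; since $R_k\to T$ we get $\pi(R_k)\to\partial\pi(T)$, whence $\partial\pi(S_k)\to\partial\pi(T)$. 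One must be slightly careful choosing $R_k$ so that simultaneously $R_k\to T$ in $\CVN$ and $\pi(R_k)$ is within, say, $1/k$ in the relevant boundary-metric sense of $\partial\pi(S_k)$; this is where one invokes that $\CVN$ is metrizable near $T$ and that $\partial\F$ is metrizable.

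The main obstacle I expect is the convergence step, specifically ruling out that the quasi-geodesic $\pi([T_i,T_j])$ dips back toward the basepoint — a naive bound ``both endpoints far from $0$'' is not by itself enough for a general reparametrized quasi-geodesic, so one genuinely needs either the quantitative statement of Corollary~\ref{C.ArationalAtInfinity} applied along the geodesic together with the arationality of the endpoints, or an appeal to Corollary~\ref{defined and continuous} / Corollary~\ref{injective} to see that every tree in the accumulation set of $[T_i,T_j]$ as $i,j\to\infty$ is arational and equivalent to $T$, hence its $\pi$-image is uniformly far from $0$ by Corollary~\ref{C.ArationalAtInfinity} again. Once that ``no backtracking'' fact is in hand, the Gromov-product estimate and the hyperbolicity of $\F$ do the rest essentially formally. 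A secondary but routine nuisance is that $\pi$ is only coarsely well-defined (diameter $\leq 4$), so all statements should be read up to uniformly bounded error, which does not affect convergence to a boundary point.
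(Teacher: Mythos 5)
Your proposal is correct and follows essentially the same route as the paper: project standard geodesics between approximating sequences to uniform reparametrized quasi-geodesics via Proposition \ref{P.Uniform}, use Corollary \ref{defined and continuous} (with Theorem \ref{T.UniqueLam}) to force any accumulation point of those geodesics to be arational, and then invoke Corollary \ref{C.ArationalAtInfinity} to rule out a dip back toward the basepoint, so the Gromov products blow up; the well-definedness and continuity steps are handled by the paper with the same contradiction scheme that your interleaving/diagonal arguments encode. You also correctly flag (and repair) the one genuine pitfall, namely that ``both endpoints far from the basepoint'' is not by itself enough for a reparametrized quasi-geodesic.
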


\begin{proof}
 Let $T_i \in \CVN$ converge to $T \in \mathcal{AT}$; we need to see that $\pi(T_i)$
 converges to a point of $\partial \mathcal{F}$ that depends only on $T$.  Toward
 contradiction, suppose this is not the case. Then we get subsequences
 $X_n$ and $Y_n$ such that the Gromov product $(\pi(X_n),\pi(Y_n))$ is
 uniformly bounded.  Consider (say a standard) geodesic $[X_n,Y_n]$;
 Proposition \ref{P.Uniform} gives that these geodesics are mapped by
 $\pi$ to uniform quasi-geodesics in $\mathcal{F}$.  Hence we find
 $Z_n$ on $X_n \to Y_n$ with $\pi(Z_n)$ of uniformly bounded distance
 from any basepoint in $\mathcal{F}$.  On the other hand, Lemma
 \ref{defined and continuous} and Theorem \ref{T.UniqueLam} give that
 any limit $Z$ of $\{Z_n\}$ must be arational.  Finally, Corollary
 \ref{C.ArationalAtInfinity} gives a contradiction.  Hence, we have a function $\partial \pi: \mathcal{AT} \to \partial \mathcal{F}$.

 The continuity statement follows similarly. Let $T_i \in
 \mathcal{AT}$ converge to $T \in \mathcal{AT}$, but assume that
 $\partial\pi(T_i)$ does not converge to $\partial\pi(T)$. After a
 subsequence we may assume that $(\partial\pi(T_i),\partial\pi(T))$ is
 bounded above. Choose trees
 $X_i,Y_i\in \CVN$ so close to $T_i$ and $T$ respectively that
 $(\pi(X_i),\pi(Y_i))$ is also bounded above and so that $X_i\to T$,
 $Y_i\to T$. As above, there is $U_i$ on a geodesic from $X_i$ to
 $Y_i$ with $\pi(U_i)$ at bounded distance from a basepoint, which is impossible.
\end{proof}

\begin{prop}\label{P.Injective}
 For arational trees $S$ and $T$, we have $\partial \pi(S)=\partial
 \pi(T)$ if and only if $L(S)=L(T)$
\end{prop}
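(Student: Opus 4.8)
The plan is to prove the two implications separately, with the easy direction being ``$L(S) \neq L(T) \Rightarrow \partial\pi(S) \neq \partial\pi(T)$'' and the substantive direction being the converse. For the easy direction, suppose $L(S) \neq L(T)$, equivalently $S^* \neq T^*$ by Proposition \ref{P.ZeroInter} and Theorem \ref{T.UniqueLam}. Pick sequences $X_n \to S$ and $Y_n \to T$ in $\CVN$; by Proposition \ref{P.Defined} we have $\pi(X_n) \to \partial\pi(S)$ and $\pi(Y_n) \to \partial\pi(T)$, so it suffices to show the Gromov product $(\pi(X_n), \pi(Y_n))$ stays bounded. Take a (standard) geodesic $[X_n, Y_n]$; by Proposition \ref{P.Uniform} its $\pi$-image is a uniform reparameterized quasi-geodesic in $\mathcal{F}$, so if the Gromov product were unbounded the whole quasi-geodesic would escape every bounded set, and in particular we could choose $U_n \in [X_n,Y_n]$ with $\pi(U_n)$ near a fixed basepoint. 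But Corollary \ref{injective} says precisely that, since $S, T$ are arational with $S^* \neq T^*$, the accumulation set of $[X_n,Y_n]$ in $\barCVN$ meets $\CVN$ — and more carefully, combining the connectedness of the accumulation set with Corollary \ref{C.ArationalAtInfinity} (arational trees are at infinity) forces $\pi$ along the geodesic to visit the bounded part of $\mathcal{F}$ only near the ``middle'', giving the desired bound on the Gromov product.

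For the hard direction, suppose $L(S) = L(T)$, i.e. $S^* = T^*$; I want $\partial\pi(S) = \partial\pi(T)$. Choose $X_n \to S$ and $Y_n \to T$ in $\CVN$, and again form standard geodesics $[X_n, Y_n]$. The key point is now Corollary \ref{defined and continuous}: since $S^* = T^*$, \emph{every} tree $U$ in the accumulation set of $[X_n,Y_n]$ is arational with $U^* = S^* = T^*$. In particular the geodesics $[X_n,Y_n]$ never come close to $\CVN$ in the limit — every accumulation point is an arational tree, hence (by Corollary \ref{C.ArationalAtInfinity}) at infinity in $\mathcal{F}$. Therefore $\pi([X_n,Y_n])$ is a uniform quasi-geodesic all of whose points have distance to a basepoint tending to infinity; a standard hyperbolicity argument (a quasi-geodesic both of whose endpoints-at-infinity it converges to, and which stays far from a basepoint) then shows $\pi(X_n)$ and $\pi(Y_n)$ represent the same point of $\partial\mathcal{F}$, i.e. $\partial\pi(S) = \partial\pi(T)$.

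The main obstacle is making the hyperbolicity/quasi-geodesic bookkeeping in the hard direction precise: I need to go from ``every accumulation point of $[X_n,Y_n]$ is arational, hence its $\pi$-image escapes to infinity'' to the statement that the two ends of the quasi-geodesic $\pi([X_n,Y_n])$ are the \emph{same} boundary point. The cleanest route is a contradiction argument mirroring Proposition \ref{P.Defined}: if $\partial\pi(S) \neq \partial\pi(T)$ then the Gromov product $(\pi(X_n), \pi(Y_n))$ is bounded above along a subsequence, so there is $Z_n \in [X_n,Y_n]$ with $\pi(Z_n)$ at bounded distance from a basepoint; passing to a limit $Z$ of $\{Z_n\}$, Corollary \ref{defined and continuous} forces $Z$ arational, and then Corollary \ref{C.ArationalAtInfinity} contradicts $\pi(Z_n)$ being bounded. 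This is essentially the same machine already used to construct $\partial\pi$, so the work is in verifying that the hypothesis $S^* = T^*$ is exactly what feeds Corollary \ref{defined and continuous}, and that the standard-geodesic endpoints can be taken to converge to $S$ and $T$ while keeping their $\pi$-images converging to $\partial\pi(S)$, $\partial\pi(T)$ — both of which are routine given Proposition \ref{P.Defined}.
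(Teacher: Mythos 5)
Your proposal is correct and follows essentially the same route as the paper: the direction $L(S)=L(T)\Rightarrow\partial\pi(S)=\partial\pi(T)$ is handled by rerunning the contradiction argument of Proposition \ref{P.Defined} (bounded Gromov product gives $Z_n$ on $[X_n,Y_n]$ with bounded projection, whose limit is arational by Corollary \ref{defined and continuous}, contradicting Corollary \ref{C.ArationalAtInfinity}), and the converse uses Corollary \ref{injective} together with Proposition \ref{P.Uniform} to bound the Gromov product $(\pi(X_n),\pi(Y_n))$. This is exactly the paper's proof, just written out in more detail.
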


\begin{proof}
 By the same argument as in the proof of Proposition
 \ref{P.Defined}, we get that $L(S)=L(T)$ implies that $\partial
 \pi(S)=\partial \pi(T)$.  So assume that $L(S) \neq L(T)$, let $S_n$
 converge to $S$ and $T_n$ converge to $T$; consider standard
 geodesics $[S_n, T_n]$.  By Lemma \ref{injective}, we have that $[S_n, T_n]$ accumulates on some portion of $\CVN$, hence after passing
 to a subsequence, we find points on $[S_n, T_n]$ projecting to
 points of $\mathcal{F}$ of uniformly bounded distance from any base
 point.  Hence $(\pi(S_n),\pi(T_n))$ is uniformly bounded, so
 $\partial \pi(S) \neq \partial \pi(T)$.
\end{proof}

\begin{prop}\label{P.Surjective}
 The map $\partial \pi$ is surjective.  Further, if $\{T_n\}$ converge
 to a tree $T$ that is not arational, then no subsequence of
 $\{\pi(T_n)\}$ converges to a point of $\partial \mathcal{F}$.
\end{prop}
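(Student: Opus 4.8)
The plan is to deduce surjectivity of $\partial\pi$ from the second assertion, so the essential content lies in the statement about non-arational limits. For surjectivity, given $\xi\in\partial\mathcal F$ choose factors $Y_n\to\xi$ in $\mathcal F$ and realise each as $Y_n=\pi(G_n)$ with $G_n\in\CVN$ (take a marked graph carrying a subgraph representing $Y_n$); pass to a subsequence with $G_n\to G\in\barCVN$. If $G\in\CVN$ then $\pi(G_n)\to\pi(G)$ is bounded, contradicting $Y_n\to\xi$; if $G\in\partial\CVN$ is not arational then the second assertion forbids $\pi(G_n)=Y_n$ from converging to a point of $\partial\mathcal F$, again a contradiction. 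Hence $G\in\mathcal{AT}$, and $\partial\pi(G)=\lim\pi(G_n)=\xi$ by Proposition \ref{P.Defined}.

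To prove the second assertion, suppose toward a contradiction that, after passing to a subsequence, $\pi(T_n)\to\xi\in\partial\mathcal F$. Fix a simplex $\Sigma$ of $\CVN$ and for each $n$ let $\Delta_n\in\Sigma$ be a point given by Lemma \ref{recurrent} for the target $T_n$, so that the greedy folding path $\gamma_n\colon[0,L_n]\to\CVN$ from $\Delta_n$ to $T_n$ is induced by an optimal map with recurrent train track structure; by Proposition \ref{P.Uniform}, $\pi(\gamma_n)$ is a reparametrized quasi-geodesic with uniform constants joining $\pi(\Delta_n)$ to $\pi(T_n)$, and $L_n\to\infty$. Apply Lemma \ref{limit of folding paths} and pass to a further subsequence. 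In the degenerate case (ii) the points $\Delta_n$ converge to a tree $S'\in\partial\cvn$ lying in the closure of $\Sigma$; being a nondegenerate collapse of a free simplicial tree of volume one, $S'$ has a nontrivial, hence proper, free factor vertex group $A$, every element of which is elliptic in $S'$ and therefore in $T$. By Serre's Lemma $A$ fixes a point of $T$, so $vol(A\mid T_n)$ is bounded above by a fixed multiple of the translation lengths of finitely many elements of $A$, which tend to $0$; thus $vol(A\mid T_n)\to 0$, and $d_{\mathcal F}(\pi(T_n),A)$ is uniformly bounded by \cite[Corollary 3.5]{BF11}, contradicting $\pi(T_n)\to\xi$.

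In the main case (i), initial segments of $\gamma_n$ converge uniformly on compact subsets of $[0,\infty)$ to a folding ray $\gamma$ converging to a tree $S\in\partial\CVN$ with $S^*\subseteq T^*$, and $\Delta_n\to\Delta:=\gamma(0)\in\CVN$. First, $S$ is not arational: otherwise $S^*\neq\emptyset$ by Remark \ref{nonempty}, and any $\mu\in S^*\subseteq T^*$ satisfies $\langle S,\mu\rangle=\langle T,\mu\rangle=0$ with $\mu\in M_N$, so Theorem \ref{T.UniqueLam} applied to the arational tree $S$ would force $T$ to be arational, a contradiction. Since $\gamma$ is a folding ray converging to a non-arational tree, Lemma \ref{combined} shows that $\pi(\gamma)$ is a uniformly bounded subset of $\mathcal F$. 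It remains to contradict $\pi(T_n)\to\xi$, and the subtlety is that, although $\pi(\gamma_n(t))$ converges up to bounded error to $\pi(\gamma(t))$ for each fixed $t$, so that the initial portions of the uniform quasi-geodesics $\pi(\gamma_n)$ remain in a fixed ball, the parameter $t$ is not arc length in $\mathcal F$ and so this does not by itself bound the endpoints $\pi(T_n)$.

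To finish one must control the tails of the $\gamma_n$. I would note that the accumulation set $\mathcal A\subseteq\barCVN$ of $\{\gamma_n\}$ is connected and that, by Theorem \ref{U-turn3} applied with the basepoint sequence $\Delta_n$ (whose limit $\Delta$ has $\Delta^*=\emptyset$), every $U\in\mathcal A$ satisfies $U^*\subseteq T^*$, hence no $U\in\mathcal A$ is arational by Theorem \ref{T.UniqueLam}. If $\pi(\gamma_n)$ were not uniformly bounded, there would be times $s_n$ with $d_{\mathcal F}\bigl(\pi(\gamma_n(s_n)),\pi(\Delta)\bigr)$ equal to a fixed constant strictly exceeding $\diam\pi(\gamma)$; continuity of the Lipschitz metric on $\CVN$ together with boundedness of $\pi(\gamma)$ forces $s_n\to\infty$ and $\gamma_n(s_n)\to Z\in\partial\CVN\cap\mathcal A$, a non-arational tree. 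Replacing $\gamma_n(s_n)$ by a Lemma \ref{recurrent} point of its simplex — which does not change $\pi$, since $\pi$ is constant on open simplices — and re-applying Lemma \ref{limit of folding paths} and Lemma \ref{combined} to the folding paths from these points to $T_n$, one gets that $\pi(\gamma_n(t))$ is also uniformly bounded for $t\ge s_n$, so $\pi(T_n)$ is bounded — the desired contradiction. I expect this last bridging step, namely transferring boundedness of the projection of the limiting folding ray to boundedness along the entire approximating folding paths, and in particular verifying that Lemma \ref{recurrent} and Lemma \ref{limit of folding paths} apply to the truncated/shifted paths, to be the main obstacle.
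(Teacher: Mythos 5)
Your overall architecture (deduce surjectivity from the second assertion; attack the second assertion via folding paths from recurrent basepoints and Lemmas \ref{recurrent}, \ref{limit of folding paths}, \ref{combined}) parallels the paper's, but two steps do not close, and the second is the heart of the matter. First, in your case (ii) the inference from ``$A$ is elliptic in $T$'' to ``$vol(A\mid T_n)\to 0$'' is unjustified: $T_n\to T$ only projectively, and the rescaling constants $\mu_n$ with $T_n/\mu_n\to T$ are in general unbounded (e.g.\ for $T_n=T_0\Phi^n$ with $\Phi$ fully irreducible), so $l_{T_n}(a)=\mu_n\,l_{T_n/\mu_n}(a)$ need not tend to $0$ even though $l_T(a)=0$. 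Note that a uniform bound on $vol(A\mid T_n)$ would force $\pi(T_n)$ to be bounded in $\mathcal F$; since that conclusion is false for general sequences converging to a non-arational tree (only convergence to a boundary point is excluded), ellipticity of $A$ in $T$ alone cannot yield it. Second, the step you flag as the ``main obstacle'' in case (i) really is a gap, and your proposed fix is circular: Lemma \ref{combined} bounds the projection of the limiting ray $\gamma$ but says nothing about the tails of the approximating paths $\gamma_n$, and re-basing at $\gamma_n(s_n)$ and re-applying Lemmas \ref{limit of folding paths} and \ref{combined} only produces boundedness of the projection of a \emph{new} limiting ray, reproducing the same problem one step later (moreover nothing forces the accumulation point $Z$ of $\gamma_n(s_n)$ to lie in $\partial\CVN$, and if it does not, the replacement by a recurrent point changes the paths entirely).

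The missing idea is to base the folding paths at the sequence itself rather than at a fixed arbitrary simplex $\Sigma$, and to derive the contradiction from hyperbolicity of $\mathcal F$ rather than from a direct bound on $\pi(T_n)$. The paper fixes $n$ large so that $X_m=\pi(T_m)$, $m>n$, lies deep in a neighborhood of the end $\xi$ and hence far from the bounded set $\mathcal R(T)$ (Corollary \ref{C.Bounded}), with geodesics $[X_m,X_{m'}]$ also far from $\mathcal R(T)$, and takes folding paths $[T'_m,T_m]$ with $T'_m$ in the simplex of $T_n$. This disposes of case (ii) at once: the collapsing factor $A$ is carried by a subgraph of $T'_m$, hence is coarsely equal to $X_n$, yet lies in $\mathcal R(T)$. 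In case (i) the limiting ray $r_n$ starts coarsely at $X_n$ and, by Corollary \ref{stick} and Lemma \ref{combined}, its projection eventually enters a uniformly bounded neighborhood of $\mathcal R(S)$, which coarsely equals $\mathcal R(T)$; since initial segments of $[T'_m,T_m]$ converge to $r_n$ while their projections are uniform reparametrized quasi-geodesics (Proposition \ref{P.Uniform}) with both endpoints far out toward $\xi$, and hence stay far from $\mathcal R(T)$, this is a contradiction. Hyperbolicity thus controls the entire approximating quasi-geodesic from its two endpoints, which is exactly the global-from-local step your argument lacks.
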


\begin{proof}
 Let $X \in\partial \mathcal{F}$, and let $X_n \in \mathcal{F}$
 converge to $X$.  Choose $T_n \in \pi^{-1}(X_n)$, and pass to a
 subsequence so that $\{T_n\}$ converges to $T$ in $\overline{CV}_N$.
 We will show that $T \in \mathcal{AT}$, which implies $\partial
 \pi(T)=X$.

 Toward contradiction, suppose that $T$ is not arational. Recall that
 $T$ has its reducing set $\mathcal R(T)\subset\mathcal F$, which is
 nonempty and uniformly bounded, see Corollary \ref{C.Bounded}.
 Fix $n$ large so that for $m>n$ $X_m$ belongs to a small neighborhood
 of the end $X$ and is far from $\mathcal R(T)$. In particular, we may
 assume that geodesics connecting $X_m$ and $X_{m'}$ for $m,m'>n$ are
 also far from $\mathcal R(T)$.

Now consider for $m>>n$ a folding path $[T'_m,T_m]$ where $T_m'$ is
in the same simplex as $T_n$ and the train track structure on
$T_m'/F_N$ is recurrent (see Lemma \ref{recurrent}),
and let $m \to
\infty$. Apply Lemma \ref{limit of folding paths}, and first assume
that case (ii) applies, so $T_m'\to S$ and elliptic elements in $S$
are elliptic in $T$. Then there is a factor $A$ which is elliptic
in $S$, and thus it is reducing for $T$, and it is also coarsely equal
to $\pi(T_n)$, contradicting the fact that $\pi(T_n)$ is far from
$\mathcal R(T)$.
  
 Now, suppose that case (i) of Lemma \ref{limit of folding paths}
 applies, so after a subsequence initial segments of $[T'_m, T_m]$
 converge to a ray $r_n$ that converges to $S$ with $S^* \subseteq
 T^*$.  By Corollary \ref{stick} $S$ is not arational and $\mathcal R(S)$
 coarsely equals $\mathcal R(T)$. 
 Using Lemma \ref{combined},
we see that the projection of $r_n$ to $\mathcal F$ is eventually
contained in a uniformly bounded neighborhood of $\mathcal R(S)$, and
therefore of $\mathcal R(T)$.

 To obtain a contradiction just note that the projections of $[T'_m, T_m]$ are uniform quasi-geodesics by Proposition \ref{P.Uniform}, so
 they don't come close to $\mathcal R(T)$ for large $n,m$ and the
 geodesics $T'_m\to T_m$ cannot accumulate to $r_n$.
\end{proof}

\begin{lemma}
The map $\partial\pi:\mathcal {AT}\to\partial\mathcal F$ is closed.
\end{lemma}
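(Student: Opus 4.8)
The plan is to show that if $K\subseteq\mathcal{AT}$ is closed, then $\partial\pi(K)\subseteq\partial\mathcal F$ is closed, using the sequential characterization of closedness that is available because $\partial\mathcal F$ is metrizable (it is the boundary of a hyperbolic space, hence metrizable) and $\mathcal{AT}/\mysim$ is metrizable and second countable by the corollary following Lemma \ref{L.ClosedMap}. So suppose $x_n\in\partial\pi(K)$ converges to $x\in\partial\mathcal F$; I want to produce $T\in K$ with $\partial\pi(T)=x$. Pick $T_n\in K$ with $\partial\pi(T_n)=x_n$. The first step is to replace each $T_n$ by an approximating sequence in $\CVN$: for each $n$ choose $X_n\in\CVN$ close enough to $T_n$ (in $\barCVN$) that $d_{\mathcal F}(\pi(X_n),x_n)\to\infty$ in the sense that $\pi(X_n)$ converges to $\partial\pi(T_n)=x_n$ — this is possible by Proposition \ref{P.Defined}, which says $\pi(X)\to\partial\pi(T)$ as $X\to T$, together with a standard diagonal argument to extract a single sequence $X_n$ with $X_n$ approximating $T_n$ well enough that $\pi(X_n)$ and $x_n$ have Gromov product tending to infinity.

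The second step is to extract, after passing to a subsequence, a limit $T\in\barCVN$ of $\{X_n\}$ (using compactness of $\barCVN$). I claim $T\in\mathcal{AT}$. Indeed, $\pi(X_n)$ converges (as $n\to\infty$) to $x\in\partial\mathcal F$, since $(\pi(X_n),x)\ge\min\{(\pi(X_n),x_n),(x_n,x)\}-\delta\to\infty$; in particular $\{\pi(X_n)\}$ escapes every bounded set of $\mathcal F$. By Proposition \ref{P.Surjective}, if $T$ were not arational then no subsequence of $\{\pi(X_n)\}$ could converge to a point of $\partial\mathcal F$ — contradiction. Hence $T\in\mathcal{AT}$, and by Proposition \ref{P.Defined} applied to $X_n\to T$ we get $\partial\pi(T)=\lim\pi(X_n)=x$.

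The third and final step is to check $T\in K$. This is where the closedness of $K$ and the structure of $\partial\pi$ enter. I know $T_n\in K$ and $X_n\to T$ with $X_n$ close to $T_n$; passing to a further subsequence, $T_n$ converges to some $T'\in\barCVN$. Since $X_n$ and $T_n$ both converge and $X_n$ was chosen close to $T_n$ — here I should be careful: "close" should be interpreted in a metric on $\barCVN$ so that $d(X_n,T_n)\to 0$, which forces $T'=T$. Hence $T_n\to T$ in $\barCVN$; since $T_n\in K$ and $K$ is closed in $\mathcal{AT}$, and $T\in\mathcal{AT}$, we conclude $T\in K$, so $x=\partial\pi(T)\in\partial\pi(K)$. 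The main obstacle is the second step, namely ruling out that the limit tree $T$ fails to be arational; but this is exactly handled by Proposition \ref{P.Surjective}, so the argument goes through once the approximation in step one is set up with $d_{\barCVN}(X_n,T_n)\to 0$ and $\pi(X_n)$ forced to stay near $x_n$.

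\begin{proof}
Since $\partial\mathcal F$ is metrizable, it suffices to check sequential closedness. Let $K\subseteq\mathcal{AT}$ be closed and let $x_n\in\partial\pi(K)$ with $x_n\to x\in\partial\mathcal F$; write $x_n=\partial\pi(T_n)$ with $T_n\in K$. Fix a metric $\rho$ on $\barCVN$. By Proposition \ref{P.Defined}, for each $n$ we may choose $X_n\in\CVN$ with $\rho(X_n,T_n)<1/n$ and with the Gromov product $(\pi(X_n),x_n)_0>n$ for a fixed basepoint $0\in\mathcal F$. Then
$$(\pi(X_n),x)_0\ \ge\ \min\{(\pi(X_n),x_n)_0,\,(x_n,x)_0\}-\delta\ \longrightarrow\ \infty,$$
so $\pi(X_n)\to x$ in $\partial\mathcal F$; in particular $\{\pi(X_n)\}$ leaves every bounded subset of $\mathcal F$. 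Passing to a subsequence, $X_n\to T$ in $\barCVN$ by compactness. If $T$ were not arational, Proposition \ref{P.Surjective} would say no subsequence of $\{\pi(X_n)\}$ converges to a point of $\partial\mathcal F$, a contradiction; hence $T\in\mathcal{AT}$, and Proposition \ref{P.Defined} gives $\partial\pi(T)=\lim\pi(X_n)=x$. Finally, since $\rho(X_n,T_n)<1/n$, also $T_n\to T$ in $\barCVN$; as $T_n\in K$, $K$ is closed in $\mathcal{AT}$, and $T\in\mathcal{AT}$, we get $T\in K$. Therefore $x=\partial\pi(T)\in\partial\pi(K)$, and $\partial\pi(K)$ is closed.
\end{proof}
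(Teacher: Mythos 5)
Your proof is correct and follows essentially the same route as the paper's: approximate the trees $T_n\in K$ by points of $\CVN$ whose $\pi$-images converge to $x$, extract a limit tree, use Proposition \ref{P.Surjective} to see it is arational and Proposition \ref{P.Defined} to identify its $\partial\pi$-image, and then invoke closedness of $K$. The only (immaterial) difference is the order of operations — the paper first passes to a convergent subsequence of the $T_n$ and then diagonalizes the approximating sequence, whereas you approximate first and recover convergence of $T_n$ from $\rho(X_n,T_n)\to 0$.
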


\begin{proof}
 Let $C \subseteq \mathcal{AT}$ be closed, and let $K=\partial
 \pi(C)$.  Let $X_n \in K$ converge to $X \in \partial \mathcal{F}$;
 we want to find $Y \in C$ with $\partial \pi(Y)=X$.  Choose $Y_n \in
 (\partial \pi)^{-1}(X_n) \cap C$, and pass to a subsequence to ensure
 that $Y_n$ converge to $Y \in \partial \CVN$.

 We claim that $Y\in \mathcal{AT}$. This follows from
 Proposition \ref{P.Surjective} applied to a sequence $\{T_n\}$ in
 $\CVN$ approximating $\{Y_n\}$ so that $T_n\to Y$ and $\pi(T_n)\to
 X$. Now the fact that $\partial \pi(Y)=X$ follows from Proposition \ref{P.Defined}.
\end{proof}

Summarizing, we have:

\begin{thm}\label{T.Main}
 The space $\partial \mathcal{F}$ is homeomorphic to the quotient
 space $\mathcal{AT}/\mysim$.
\end{thm}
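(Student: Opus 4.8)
The plan is to assemble Theorem \ref{T.Main} from the structural results already established, so the proof is largely bookkeeping about point-set topology. First I would record that $\partial\pi:\mathcal{AT}\to\partial\mathcal{F}$ is a well-defined continuous map (Proposition \ref{P.Defined}), that it descends to a well-defined map $\overline{\partial\pi}:\mathcal{AT}/\mysim\to\partial\mathcal{F}$ (Proposition \ref{P.Injective}, which says the fibers of $\partial\pi$ are exactly the $\mysim$-classes), that $\overline{\partial\pi}$ is injective (again Proposition \ref{P.Injective}), surjective (Proposition \ref{P.Surjective}), and closed (the last Lemma, since $\partial\pi$ closed and $p$ a quotient map imply $\overline{\partial\pi}$ closed). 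A continuous closed bijection is a homeomorphism, so this finishes the argument.

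Concretely I would proceed as follows. Give $\mathcal{AT}$ the subspace topology from $\partial\CVN$ and $\mathcal{AT}/\mysim$ the quotient topology via $p:\mathcal{AT}\to\mathcal{AT}/\mysim$. Step one: by Proposition \ref{P.Defined} the map $\partial\pi$ is continuous, and by Proposition \ref{P.Injective} we have $\partial\pi(S)=\partial\pi(T)$ iff $S\sim T$; hence $\partial\pi$ factors through $p$ as $\partial\pi=\overline{\partial\pi}\circ p$ with $\overline{\partial\pi}$ a well-defined injection. Step two: $\overline{\partial\pi}$ is continuous because $p$ is a quotient map and $\overline{\partial\pi}\circ p=\partial\pi$ is continuous. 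Step three: $\overline{\partial\pi}$ is surjective by Proposition \ref{P.Surjective}. Step four: $\overline{\partial\pi}$ is closed — if $A\subseteq\mathcal{AT}/\mysim$ is closed then $p^{-1}(A)$ is closed in $\mathcal{AT}$, and by the final Lemma $\partial\pi(p^{-1}(A))=\overline{\partial\pi}(A)$ is closed in $\partial\mathcal{F}$. Step five: a continuous, closed, bijective map is a homeomorphism, so $\overline{\partial\pi}:\mathcal{AT}/\mysim\to\partial\mathcal{F}$ is a homeomorphism.

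I do not expect a genuine obstacle here, since all the analytic content has been pushed into the earlier propositions; the only care needed is purely formal. The one point worth stating carefully is that the fiber description in Proposition \ref{P.Injective} is exactly what makes the factorization through $p$ legitimate — that $\partial\pi$ is constant on $\mysim$-classes \emph{and} separates distinct classes — so that $\overline{\partial\pi}$ is a bijection rather than merely a surjection. It is also worth noting explicitly that one does not need $\mathcal{AT}/\mysim$ to be compact or Hausdorff a priori: closedness of $\overline{\partial\pi}$ together with bijectivity already forces the inverse to be continuous. (The metrizability and second countability established earlier are reassuring for the sequential arguments in the supporting propositions but are not logically needed at this final assembly stage.) Thus the proof is the three-line combination: continuous + closed + bijective $\Rightarrow$ homeomorphism, with the four inputs cited above.

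\begin{proof}
By Proposition \ref{P.Defined}, $\partial\pi:\mathcal{AT}\to\partial\mathcal{F}$ is continuous, and by Proposition \ref{P.Injective} we have $\partial\pi(S)=\partial\pi(T)$ if and only if $S\sim T$. Hence $\partial\pi$ is constant on the fibers of the quotient map $p:\mathcal{AT}\to\mathcal{AT}/\mysim$ and separates distinct fibers, so it factors as $\partial\pi=\overline{\partial\pi}\circ p$ where $\overline{\partial\pi}:\mathcal{AT}/\mysim\to\partial\mathcal{F}$ is a well-defined bijection onto its image. Since $p$ is a quotient map and $\overline{\partial\pi}\circ p=\partial\pi$ is continuous, $\overline{\partial\pi}$ is continuous. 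By Proposition \ref{P.Surjective}, $\partial\pi$ is surjective, hence so is $\overline{\partial\pi}$. Finally, if $A\subseteq\mathcal{AT}/\mysim$ is closed then $p^{-1}(A)$ is closed in $\mathcal{AT}$, and since $\partial\pi$ is closed (by the preceding Lemma) and $\partial\pi(p^{-1}(A))=\overline{\partial\pi}(A)$, the set $\overline{\partial\pi}(A)$ is closed in $\partial\mathcal{F}$; thus $\overline{\partial\pi}$ is a closed map. A continuous, closed bijection is a homeomorphism, so $\overline{\partial\pi}:\mathcal{AT}/\mysim\to\partial\mathcal{F}$ is a homeomorphism.
\end{proof}
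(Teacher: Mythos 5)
Your proof is correct and is exactly the paper's argument, merely spelled out in more detail: the paper's own proof is the one-line observation that $\partial\pi$ factors through $p$ to give a continuous, bijective, closed map $\mathcal{AT}/\mysim\to\partial\mathcal{F}$, which is therefore a homeomorphism. The careful bookkeeping you add (factorization via Proposition \ref{P.Injective}, continuity via the quotient property, closedness pulled back through $p$) is the intended content.
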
 

\begin{proof}
 The map $\partial \pi:\mathcal{AT} \to \partial \mathcal{F}$ factors through $p:\mathcal{AT} \to \mathcal{AT}/\mysim$ to give a continuous, bijective, closed map $\mathcal{AT}/\mysim \to \partial \mathcal{F}$.
\end{proof}

\bibliography{indecompREF}

\end{document}